\documentclass[11pt,a4paper]{article}
\usepackage{multirow}
\usepackage{epsfig}
\usepackage{epstopdf}
\usepackage[T1]{fontenc}
\usepackage{geometry}
\usepackage{amsbsy,amsmath,latexsym,amsfonts, epsfig, color, authblk, amssymb, graphics, bm}
\usepackage{epsf,slidesec,epic,eepic}
\usepackage{fancybox}
\usepackage{fancyhdr}
\usepackage{setspace}
\usepackage{cases}
\usepackage{subfigure}
\usepackage{epsfig}
\usepackage{epstopdf}
\usepackage{nccmath}
\usepackage{algorithm}
\usepackage{algorithmic}
\setlength{\abovecaptionskip}{2pt}
\setlength{\belowcaptionskip}{0pt}

\setlength{\parindent}{2em}

\usepackage[colorlinks, citecolor=blue]{hyperref}

\newtheorem{theorem}{Theorem}[section]
\newtheorem{lemma}{Lemma}[section]
\newtheorem{definition}{Definition}[section]

\newtheorem{proposition}{Proposition}[section]
\newtheorem{corollary}{Corollary}[section]

\newtheorem{remark}{Remark}[section]

\newenvironment{proof}{{\noindent \bf Proof:}}{\hfill$\Box$\medskip}

\definecolor{lred}{rgb}{1,0.8,0.8}
\definecolor{lblue}{rgb}{0.8,0.8,1}
\definecolor{dred}{rgb}{0.6,0,0}
\definecolor{dblue}{rgb}{0,0,0.5}
\definecolor{dgreen}{rgb}{0,0.5,0.5}

\title{Tilt stability of a class of nonlinear semidefinite programs}
\author{Yulan Liu\footnote{(ylliu@gdut.edu.cn), School of Mathematics and Statistics, Guangdong University of Technology, Guangzhou},\ \
Shaohua Pan\footnote{(shhpan@scut.edu.cn), School of Mathematics, South China University of Technology, Guangzhou.}\ \ {\rm and}\ \ Shujun Bi\footnote{Corresponding author:(bishj@scut.edu.cn), School of Mathematics, South China University of Technology, Guangzhou.}}
\date{}

\begin{document}

 \maketitle

 \begin{abstract}
 This paper concerns the tilt stability of local optimal solutions to a class of nonlinear semidefinite programs, which involves a twice continuously differentiable objective function and a convex feasible set. By leveraging the second subderivative of the extended-valued objective function and imposing a suitable restriction on the multiplier, we derive two point-based sufficient characterizations for tilt stability of local optimal solutions around which the objective function has positive semidefinite Hessians, and for a class of linear positive semidefinite cone constraint set, establish a point-based necessary characterization with a certain gap from the sufficient one. For this class of linear positive semidefinite cone constraint case, under a suitable restriction on the set of multipliers, we also establish a point-based sufficient and necessary characterization, which is weaker than the dual constraint nondegeneracy when the set of multipliers is singleton. As far as we know, this is the first work to study point-based sufficient and/or necessary characterizations for nonlinear semidefinite programs with convex constraint sets without constraint nondegeneracy conditions.
\end{abstract}
\noindent
{\bf Keywords} Nonlinear semidefinite programs; tilt stability; second subderivative

\noindent
{\bf Mathematics Subject  Classification (2020)} 90C22; 90C31; 49J52; 90C56

 \section{Introduction}\label{sec1.0}

 The notion of tilt stability was proposed by Poliquin and Rockafellar \cite{Poliquin98} for the local minimum of an extended real-valued function, which applies to constrained optimization problems via the indicator function of the constraint set. Such stability is a kind of single-valued Lipschitzian behavior of local minimum with respect to one-parametric linear or tilt perturbation, which is basically equivalent to a uniform second-order growth condition as well as strong metric regularity of the subdifferential by \cite[Theorem 3.3]{Drusvyatskiy13} (see also \cite{Mordu131}). Its introduction aims to justify convergence properties, stopping criteria, and the robustness of numerical algorithms. Tilt stability has attracted wide attention in the literature especially in recent years (see, e.g., \cite{Mordu151,Gfrerer15,ChienSiam18,Benko19,Nghia24}).     

 In \cite{Poliquin98}, Poliquin and Rockafellar proved that for the local minimum of an extended real-valued function, under the mild assumptions of prox-regularity and subdifferential continuity, a stationary point is a tilt-stable local minimizer if and only if the second-order limiting subdifferential/generalized Hessian introduced by Mordukhovich \cite{Mordu92} is positive-definite at the point in question. Later, by developing a new second-order subdifferential calculus, Mordukhovich and Rockafellar \cite{Mordu12} derived second-order characterizations of tilt-stable minimizers for some classes of constrained optimization problems. Among others, for $\mathcal{C}^2$-smooth nonlinear programs (NLPs), they showed that under the linear independence constraint qualification (LICQ), a stationary point is a tilt-stable local minimizer if and only if the strong second-order sufficient condition (SSOSC) holds. This result implies that in this setting, if the LICQ at the point in question holds, the tilt stability is equivalent to Robinson's strong regularity \cite{Robinson80} of the associated KKT system. Observe that tilt stability does not postulate LICQ as a necessary condition. This inspires some researchers to delve into tilt stability for NLPs under weaker conditions. 
 
 Mordukhovich and Outrata \cite{Mordu132} proved that SSOSC is sufficient for a stationary point to be a tilt-stable local minimizer of NLPs under the MFCQ and the constant rank constraint qualification (CRCQ). Mordukhovich and Nghia \cite{Mordu151} showed that SSOSC is indeed not a necessary condition for tilt stability and then introduced the uniform second-order sufficient condition to characterize tilt stability when both MFCQ and CRCQ occur. Recently, Gfrerer and Mordukhovich \cite{Gfrerer15} obtained some point-based second-order sufficient conditions for tilt-stable local minimizers of NLPs under some weak conditions including the so-called metric subregularity constraint qualification (MSCQ) and the bounded extreme point property, and established some point-based second-order characterizations of tilt stability under certain additional assumptions. All the aforementioned references employ the uniform positive-definiteness of second-order limiting subdifferentials \cite{Poliquin98,Mordu12,Mordu131} or combined second-order subdifferentials \cite{Mordu151,Gfrerer15}. Different from them, Chieu et al.'s paper \cite{ChienSiam18} adopted the subgradient graphical derivative of an extended real-valued function to characterize tilt stability and extend to the case of NLPs.

 Though the tilt stability of NLPs has received active attention in the past ten years, there are a few works on the tilt stability of non-polyhedral conic programs and most of the existing results were obtained under strong constraint nondegeneracy assumption. Morduhovich et al. \cite{Mordu14} employed the second-order subdifferential calculation from \cite{Outrata08} to present a characterization of tilt-stable minimizers for second-order cone programs (SOCPs) in the extended SSOSC under a nondegeneracy assumption. Similar characterizations involving nondegeneracy and uniqueness of Lagrange multipliers were also established in \cite{Mordu152,Mordu151} for conic programs with a $\mathcal{C}^2$-cone reducible constraint set, where the second-order subdifferential terms are calculated in \cite{Mordu152}. Recently, for the minimization of a sum of a twice continuously differentiable convex function and a proper lower continuous (lsc) convex function, Nghia \cite{Nghia24} presented a novel but abstract characterization for tilt stability by the quadratic growth of the nonsmooth convex function with respect to a certain set, and for the nonsmooth function being the indicator of positive semidefinite (PSD) cone, achieved a specific point-based sufficient and necessary characterization. Note that the constraint nondegeneracy automatically holds for this class of composite optimization. To the best of our knowledge, Benko et al.'s work \cite{Benko19} is the only one to achieve the characterization of tilt stability for non-polyhedral conic programs without constraint nondegeneracy. They obtained neighborhood characterizations and much more difficult point-based characterizations under an appropriate MSCQ for the tilt stability of SOCPs. However, the SOCP considered in \cite{Benko19} only involves a second-order cone, which is essentially equivalent to an NLP.

 In this paper, we are interested in the characterization of tilt-stable local minimizers for nonlinear semidefinite programs (SDPs) with a convex feasible set. Let $\mathbb{S}^n$ denote the space of all $n\times n$ real symmetric matrices, equipped with the trace inner product $\langle \cdot,\cdot\rangle$ and its induced Frobenius norm $\|\cdot\|_F$, and let $\mathbb{S}_{+}^n$ be the cone consisting of all PSD matrices from the space $\mathbb{S}^n$. This class of nonlinear SDPs takes the following form  
 \begin{equation}\label{pNSDP}
  \min_{x\in\mathbb{X}}\Big\{\varphi(x)\ \ {\rm s.t.}\ \ \mathcal{A}x=b,\,g(x)\in\mathbb{S}^n_+\Big\},
 \end{equation}
 where $\varphi\!:\mathbb{X}\to\overline{\mathbb{R}}:=(-\infty,\infty]$ is a proper lsc function that is twice continuously differentiable on an open set $\mathcal{O}\supset\Gamma\!:=\mathcal{A}^{-1}(b)\cap g^{-1}(\mathbb{S}^n_+)$, $\mathcal{A}\!:\mathbb{X}\to\mathbb{R}^m$ and $b\in\mathbb{R}^m$ are the linear mapping and vector, and  $g\!:\mathbb{X}\to\mathbb{S}^n$ is a twice continuously differentiable mapping that will be assumed to be $\mathbb{S}_{-}^n$-convex later. The main contribution of this work is to derive point-based sufficient and/or necessary characterizations for tilt-stable local minimizers of \eqref{pNSDP} without constraint nondegeneracy assumption, which greatly enhances the result of \cite{Nghia24} for a special SDP as well as provides a novel way to study tilt stability of this class of non-polyhedral conic programs. Different from the existing works, we establish our results by operating directly the second subderivative of the extended-valued objective function $\Phi\!:=\varphi+\delta_{\Gamma}$, where $\delta_{\Gamma}$ is the indicator of the feasible set $\Gamma$. In addition, we overcome the difficulty caused by the nonuniqueness of Lagrange multipliers by imposing a restriction on the multiplier skillfully. Specifically, in Section \ref{sec3.1} we derive two point-based sufficient characterizations for tilt-stable local minimizers around which $\nabla^2\varphi(x)$ is positive semidefinite; see Theorems \ref{Scond1-ptilt} and \ref{Scond2-ptilt}. Among others, the characterization of Theorem \ref{Scond1-ptilt} needs the multiplier of minimum rank, while that of Theorem \ref{Scond2-ptilt} imposes a restriction on the set of multiplier sets but does not require it to be a singleton. In Section \ref{sec3.2}, for a class of linear mapping $g$, we obtain a point-based necessary characterization with a certain gap from the converse of the sufficient characterization of Theorem \ref{Scond1-ptilt}. Then, we sharpen this result and establish a point-based sufficient and necessary characterization. In Section \ref{sec3.3}, these characterizations are applied to the standard linear primal and dual SDPs, and are demonstrated to be weaker than the dual constraint nondegeneracy. 
 \section{Notation and preliminaries}\label{sec2}

 Throughout this paper, a hollow capital such as $\mathbb{X},\mathbb{Y}$ and $\mathbb{Z}$ represents a finite dimensional real Euclidean space, equipped with inner product $\langle \cdot,\cdot\rangle$ and its induced norm $\|\cdot\|$, $\mathbb{B}(x,\delta)$ denotes the closed ball of radius $\delta$ centered at $x$, and $\mathbb{O}^n$ denotes the set of all $n\times n$ real orthogonal matrices. For a positive integer $k$, $[k]\!:=\{1,\ldots,k\}$ and  $I_{k}$ denote the $k\times k$ identity matrix. For a matrix $X\in\mathbb{S}^n$, let $X^{\dagger}$ represent the Moore-Penrose pseudo-inverse of $X$, let $\lambda_1(X)\ge\cdots\ge\lambda_n(X)$ denote its eigenvalues arranged in a nonincreasing order, let $\lambda^{+}(X)$ and $\lambda^{-}(X)$ be the subvectors of $\lambda(X)=(\lambda_1(X),\ldots,\lambda_n(X))^{\top}$ obtained by deleting those non-positive components and those nonnegative components of $\lambda(X)$, respectively, and write $\mathbb{O}(X):=\big\{P\in\mathbb{O}^n\,|\,X=P{\rm Diag}(\lambda(X))P^{\top}\big\}$. For any index sets $J_1,J_2\subset[n]$, $0_{J_1J_2}$ denotes a $|J_1|\times|J_2|$ zero matrix, and when $J_1=J_2$, we write $0_{J_1}$ for $0_{J_1J_1}$. For a vector $x\in\mathbb{X}$, $[\![x]\!]$ denotes the subspace spanned by $x$ whose orthogonal complementary is denoted by $[\![x]\!]^\perp$. For a closed set $\Delta\subset\mathbb{X}$, $\delta_{\Delta}$ and $\sigma_{\!\Delta}$ denote the indicator and support functions of $\Delta$, respectively. For a closed convex cone $K\subset\mathbb{X}$, ${\rm lin}(K):=K\cap(-K)$ is the largest subspace contained in $K$, and ${\rm Sp}(K):=\mathbb{R}_{+}(K-K)$ is the linear space generated by the cone $K$. For a linear mapping $\mathcal{B}\!:\mathbb{X}\to\mathbb{Y}$, ${\rm Im}\,\mathcal{B}$ and ${\rm Ker}\,\mathcal{B}$ denote the image and null space of $\mathcal{B}$, respectively, and $\mathcal{B}^*\!:\mathbb{Y}\to\mathbb{X}$ denotes its adjoint. For a twice differentiable mapping $G\!:\mathbb{X}\to\mathbb{Y}$, $\nabla G(x)\!:\mathbb{Y}\to\mathbb{X}$ means the adjoint of $G'(x)\!:\mathbb{X}\to\mathbb{Y}$, the differential mapping of $G$ at $x$, and $D^2G(x)$ denotes its twice differential mapping of $G$ at $x$. For an extended real-valued function $h\!:\mathbb{X}\to\overline{\mathbb{R}}\!:=\!(-\infty,\infty]$, ${\rm dom}\,h\!:=\!\{x\!\in\! \mathbb{X}\,|\, h(x)\!<\!+\infty\}$ represents its domain.
 
 Now we recall the notion of regular and general subdifferentials of an extended real-valued function. For more details, please refer to the excellent monographs \cite{RW98, Mordu18}.
 \begin{definition}
  (see \cite[Definition 8.3]{RW98}) Consider a function $h\!:\mathbb{X}\to\overline{\mathbb{R}}$ and a point $x\in{\rm dom}\,h$. The regular subdifferential of $h$ at $x$ is defined as
  \[
   \widehat{\partial}h(x):=\bigg\{v\in\mathbb{X}\ |\ 
   \liminf_{x'\to x\atop x'\ne x}\frac{h(x')-h(x)-\langle v,x'-x\rangle}{\|x'-x\|}\ge 0\bigg\};
  \]
  the general (known as limiting or Morduhovich) subdifferential of $h$ at $x$ is defined as
  \[
   \partial h(x):=\Big\{v\in\mathbb{X}\ |\ \exists\,x^k\to x\ {\rm with}\ h(x^k)\to h(x)\ {\rm and}\ v^k\in\widehat{\partial}h(x^k)\ {\rm s.t.}\ v^k\to v\Big\}.
 \]
 \end{definition}
 
 When $h=\delta_{\Omega}$ for a nonempty closed set $\Omega\subset\mathbb{X}$, its regular subdifferential $\partial h(x)$ is the regular normal cone to $\Omega$ at $x$, denoted by $\widehat{\mathcal{N}}_{\Omega}(x)$, and its subdifferential reduces to the normal cone to $\Omega$ at $x$, denoted by $\mathcal{N}_{\Omega}(x)$. When $\Omega$ is convex, at any $x\in\Omega$, they reduce to 
 the one in the sense of convex analysis, i.e., $\big\{v\in\mathbb{X}\ |\ \langle v,x'-x\rangle\ge 0\ \ \forall x'\in\Omega\big\}$. 
 \subsection{Tangent cone and second-order tangent set}\label{sec2.1}

 Before introducing tangent cone to a set, we recall the notion of subderivative function.
 \begin{definition}\label{sderive-def}
  (see \cite[Definitions 8.1 $\&$ 7.20]{RW98}) Consider a function $h\!:\mathbb{X}\to\mathbb{\overline{R}}$ and a point $x\in{\rm dom}\,h$. Denote the first-order difference quotients of $h$ at $x$ by
  \[
   \Delta_{\tau}h(x)(w'):=\tau^{-1}\big[h(x+\tau w')-h(x)\big]\quad{\rm for}\ \tau>0.
  \] 
  The subderivative function of $h$ at $x$, denoted by $dh(x)\!:\mathbb{X}\to[-\infty,\infty]$, is defined as
  \[
   dh(x)(w):=\liminf_{\tau\downarrow 0\atop w'\to w}\Delta_{\tau}h(x)(w')\quad{\rm for}\ w\in \mathbb{X}.
  \]
 \end{definition}
 
 With the subderivative function, we follow the same line as in \cite{MohMSSiam20} to introduce the critical cone of an extended real-valued function $h\!:\mathbb{X}\to\overline{\mathbb{R}}$ at a point pair $(x,v)\in{\rm gph}\,\partial h$:
 \begin{equation*}
 \mathcal{C}_{h}(x,v):=\big\{w\in\mathbb{X}\ |\ dh(x)(w)=\langle v,w\rangle\big\}.
 \end{equation*}  
 When $h=\delta_{\Omega}$ for a nonempty closed set $\Omega\subset\mathbb{X}$, the subderivative $dh(x)$ for $x\in{\rm dom}\,h$ is the indicator of $\mathcal{T}_{\Omega}(x)$, the tangent cone to $\Omega$ at $x$, and now $\mathcal{C}_{h}(x,v)=\mathcal{T}_{\Omega}(x)\cap[\![v]\!]^{\perp}:=\mathcal{C}_{\Omega}(x,v)$. 
 The tangent and inner tangent cones to $\Omega$ at $x\in\Omega$ are respectively defined as
 \begin{align*}
 \mathcal{T}_{\Omega}(x)&:=\big\{w\in\mathbb{X}\ |\ \exists\, \tau_k\downarrow 0\ \ {\rm s.t.}\ \ {\rm dist}(x+\tau_kw,\Omega)=o(\tau_k)\big\},\\
 \mathcal{T}_{\Omega}^{i}(x)&:=\big\{w\in\mathbb{X}\ |\ {\rm dist}(x+\tau w,\Omega)=o(\tau)\ \ \forall\tau\ge 0\big\}.
 \end{align*}
 We stipulate that $\mathcal{T}_{\Omega}(x)\!=\!\mathcal{T}^i_{\Omega}(x)=\emptyset$ when $x\notin\Omega$. The second-order tangent set to $\Omega$ at $x\in\Omega$ for $w\in\mathcal{T}_{\Omega}(x)$ is defined as
 \[
  \mathcal{T}^{2}_{\Omega}(x,w)
  :=\Big\{z\in\mathbb{X}\ |\ \exists\, \tau_k\downarrow 0\ \ {\rm s.t.}\ \ {\rm dist}(x+\tau_kw+(\tau_k^2/2)z,\Omega)=o(\tau_k^2)\Big\},
 \]
 while the inner second-order tangent set to $\Omega$ at $x\in \Omega$ for $w\in\mathcal{T}_{\Omega}^{i}(x)$ is defined as
 \[
 \mathcal{T}^{i,2}_{\Omega}(x,w)
 :=\Big\{z\in\mathbb{X}\ |\ {\rm dist}(x+\tau w+(\tau^2/2)z,\Omega)=o(\tau^2)\quad \forall\tau\ge 0\Big\}.
 \]
 One can check that $\mathcal{T}^{2}_{\Omega}(x,w)=\emptyset$ if $w\notin\mathcal{T}_{\Omega}(x)$, and $\mathcal{T}^{i,2}_{\Omega}(x,w)=\emptyset$ if  $w\notin\mathcal{T}_{\Omega}^{i}(x)$. The set $\Omega$ is called parabolically derivable at $x$ for $w\in\mathcal{T}_{\Omega}(x)$ if $\mathcal{T}^{i,2}_{\Omega}(x,w) =\mathcal{T}^{2}_{\Omega}(x,w)\ne\emptyset$.

 Next we recall the tangent and normal cones to $\mathbb{S}_{+}^n$ at any given $X\!\in\mathbb{S}_{+}^n$. Pick any $S\in\mathcal{N}_{\mathbb{S}_{+}^n}(X)$. Then, $X$ and $S$ have a simultaneous ordered eigenvalue decomposition
 \begin{equation}\label{XSeig}
  X=P\begin{pmatrix}
   {\rm Diag}(\lambda^{+}(X)) & 0\\
   0 & 0\\
  \end{pmatrix}P^\top\ \ {\rm and}\ \ 
  S=P\begin{pmatrix}
   0 & 0 \\
   0 & {\rm Diag}(\lambda^{-}(S))
   \end{pmatrix}P^\top 
 \end{equation}
 with $P\!\in\mathbb{O}(X)\cap\mathbb{O}(S)$. Let $P_{0}$ (resp. $P_{+}$ and $P_{-}$) be the matrix obtained by removing those columns of $P$ whose indices do not belong to $\{i\in[n]\,|\,\lambda_i(X)=0=\lambda_i(S)\}$  (resp. $\{i\in[n]\,|\,\lambda_i(X)>0\}$ and $\{i\in[n]\,|\,\lambda_i(S)<0\}$). By \cite[Section 2]{Sun06} or \cite[Section 5.3.1]{BS00}, 
 \begin{align}\label{SDCTan}  
  \mathcal{T}_{\mathbb{S}^n_+}(X)=\Big\{W\in \mathbb{S}^n\ |\ [P_{0}\ \ P_{-}]^{\top}W[P_{0}\ \ P_{-}]\succeq 0\Big\},\qquad\quad\\
  \label{SDCNor}
  \mathcal{N}_{\mathbb{S}^n_+}(X)=\Big\{W\in \mathbb{S}^n\ |\ [P_{0}\ \ P_{-}]^{\top}W[P_{0}\ \ P_{-}]\preceq 0,\,P_{+}^{\top} W P=0\Big\},
 \end{align}
 while the  critical cone of $\mathbb{S}^n_+$ at $X$ associated with $S$ takes the following form
 \begin{equation}\label{SDCritial}
 \mathcal{C}_{\mathbb{S}^n_+}(X,S)=\Big\{W\in \mathbb{S}^n\,|\,  P_{0}^{\top} W P_{0}\succeq 0,\,P_{0}^{\top} W P_{-}=0\ {\rm and}\ P_{-}^{\top} W P_{-}=0\Big\}.
 \end{equation} 
 The following lemma characterizes the support of the second-order tangent set of $\mathbb{S}^n_+$. 
\begin{lemma}\label{sigma-Lemma}
 Consider any $X\in\mathbb{S}_{+}^n$ and $S\in\mathcal{N}_{\mathbb{S}_{+}^n}(X)$. Let $X$ and $S$ have the eigenvalue decomposition as in \eqref{XSeig} with $\alpha=\{i\in[n]\,|\,\lambda_i(X)>0\}$ and $\gamma=\{i\in[n]\,|\,\lambda_i(S)<0\}$. Then, for any $W\in\mathcal{C}_{\mathbb{S}_{+}^n}(X,S)$, with $\widetilde{W}:=P^{\top}WP$ there holds 
 \begin{align*}
 \sigma_{\mathcal{T}^2_{\mathbb{S}^n_+}(X,W)}(S)=2\langle S, WX^{\dagger}W\rangle=
 \left\{\begin{matrix}
     &2\sum\limits_{i\in \alpha,j\in \gamma}\frac{\lambda_j(S)}{\lambda_i(X)}(\widetilde{W}_{ij})^2 &{\rm if}\ \  X\neq 0,\\
     & 0 & {\rm otherwise}.
     \end{matrix}
 \right.
\end{align*}
\end{lemma}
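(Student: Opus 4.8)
The plan is to reduce the support-function computation to the known closed form of the second-order tangent set of $\mathbb{S}^n_+$, and then to use the critical-cone condition on $W$ to annihilate the residual term. Write $Q:=[P_0\ P_-]$, whose columns span ${\rm Ker}\,X$, let $\bar\alpha:=\{i\in[n]\,|\,\lambda_i(X)=0\}$ (so $Q$ has $|\bar\alpha|$ columns, with $P_0$ indexed by $\beta:=\{i\in[n]\,|\,\lambda_i(X)=0=\lambda_i(S)\}$ and $P_-$ by $\gamma$), and keep $\alpha$ as in the statement. I would first recall, e.g. from \cite[Section 5.3.2]{BS00} or by deriving it from the $\mathcal{C}^2$-cone reducibility of $\mathbb{S}^n_+$ at $X$ through the Schur-complement reduction $M\mapsto Q^\top\!\big[M-MP_{+}(P_{+}^\top MP_{+})^{\dagger}P_{+}^\top M\big]Q$, the identity: for every $W\in\mathcal{T}_{\mathbb{S}^n_+}(X)$,
\[
 \mathcal{T}^2_{\mathbb{S}^n_+}(X,W)=\Big\{Z\in\mathbb{S}^n\ \big|\ Q^\top ZQ-2\,Q^\top WX^{\dagger}WQ\in\mathcal{T}_{\mathbb{S}^{|\bar\alpha|}_+}\!\big(Q^\top WQ\big)\Big\},
\]
the blocks $P_+^\top ZP_+$ and $P_+^\top ZQ$ of $P^\top ZP$ being left free; this applies since $W\in\mathcal{C}_{\mathbb{S}^n_+}(X,S)\subset\mathcal{T}_{\mathbb{S}^n_+}(X)$. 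The curvature term $-2\,Q^\top WX^{\dagger}WQ$ arises as the second derivative of the reduction map, and obtaining its sign correctly is the crucial point.

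Next, by \eqref{XSeig} the matrix $S$ is supported on the columns $P_-$ of $Q$, so $S=Q\bar SQ^\top$ with $\bar S:=Q^\top SQ$; consequently $\langle S,Z\rangle=\langle\bar S,Q^\top ZQ\rangle$ for all $Z\in\mathbb{S}^n$, and $\langle\bar S,Q^\top WX^{\dagger}WQ\rangle=\langle S,WX^{\dagger}W\rangle$. Setting $M:=Q^\top WQ$ and performing the change of variable $Y:=Q^\top ZQ-2\,Q^\top WX^{\dagger}WQ$, which ranges over all of $\mathcal{T}_{\mathbb{S}^{|\bar\alpha|}_+}(M)$ as $Z$ ranges over $\mathbb{S}^n$, gives
\[
 \sigma_{\mathcal{T}^2_{\mathbb{S}^n_+}(X,W)}(S)=2\langle S,WX^{\dagger}W\rangle+\sigma_{\mathcal{T}_{\mathbb{S}^{|\bar\alpha|}_+}(M)}(\bar S).
\]

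It remains to check that the last term vanishes. By \eqref{SDCritial}, $W\in\mathcal{C}_{\mathbb{S}^n_+}(X,S)$ forces $P_-^\top WP_-=0$ and $P_0^\top WP_-=0$, so in the $(\beta,\gamma)$ block partition only the $(\beta,\beta)$ block $P_0^\top WP_0\succeq0$ of $M$ is nonzero, while only the $(\gamma,\gamma)$ block ${\rm Diag}(\lambda^-(S))\preceq0$ of $\bar S$ is nonzero; in particular $M\succeq0$, $\bar S\preceq0$ and $M\bar S=0$. Hence $\bar S\in\mathcal{N}_{\mathbb{S}^{|\bar\alpha|}_+}(M)=\big(\mathcal{T}_{\mathbb{S}^{|\bar\alpha|}_+}(M)\big)^{\circ}$, and since the support function of a closed convex cone vanishes at any point of its polar, $\sigma_{\mathcal{T}_{\mathbb{S}^{|\bar\alpha|}_+}(M)}(\bar S)=0$. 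Thus $\sigma_{\mathcal{T}^2_{\mathbb{S}^n_+}(X,W)}(S)=2\langle S,WX^{\dagger}W\rangle$; substituting $X^{\dagger}=\sum_{i\in\alpha}\lambda_i(X)^{-1}p_ip_i^\top$ and $S=\sum_{j\in\gamma}\lambda_j(S)p_jp_j^\top$ (with $p_k$ the $k$-th column of $P$) yields $\langle S,WX^{\dagger}W\rangle=\sum_{i\in\alpha,\,j\in\gamma}\frac{\lambda_j(S)}{\lambda_i(X)}\widetilde W_{ij}^2$ if $X\neq0$, and $0$ if $X=0$ (as then $X^{\dagger}=0$), which is exactly the asserted formula.

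The main obstacle I anticipate is pinning down the closed form of $\mathcal{T}^2_{\mathbb{S}^n_+}(X,W)$ with the correct curvature term $-2\,Q^\top WX^{\dagger}WQ$; once it is available, the rest is bookkeeping with the simultaneous eigendecomposition \eqref{XSeig}. It is worth stressing that the hypothesis $W\in\mathcal{C}_{\mathbb{S}^n_+}(X,S)$, rather than merely $W\in\mathcal{T}_{\mathbb{S}^n_+}(X)$, is used precisely to guarantee that $\bar S$ is normal to $M$ in the conic sense; without it the residual $\sigma_{\mathcal{T}_{\mathbb{S}^{|\bar\alpha|}_+}(M)}(\bar S)$ could be $+\infty$.
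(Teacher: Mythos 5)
Your proof is correct and follows essentially the same route as the paper: both rest on the Bonnans--Shapiro second-order analysis of the semidefinite cone and reduce the support function to the sigma-term $2\langle S,WX^{\dagger}W\rangle$ before expanding in the common eigenbasis from \eqref{XSeig}. The only difference is one of packaging: the paper obtains $\sigma_{\mathcal{T}^2_{\mathbb{S}^n_+}(X,W)}(S)=2\langle S,WX^{\dagger}W\rangle$ by flipping signs to $\mathbb{S}^n_-$ and citing the known sigma-term formula, whereas you re-derive that formula from the explicit description of $\mathcal{T}^2_{\mathbb{S}^n_+}(X,W)$ and verify directly that the residual $\sigma_{\mathcal{T}_{\mathbb{S}^{|\bar\alpha|}_+}(M)}(\bar S)$ vanishes because the critical-cone condition forces $\bar S\in\mathcal{N}_{\mathbb{S}^{|\bar\alpha|}_+}(M)$ --- a step the paper leaves implicit in its citation.
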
 
\begin{proof}
 Fix any $W\in\mathcal{C}_{\mathbb{S}_{+}^n}(X,S)=\mathcal{T}_{\mathbb{S}_{+}^n}(X)\cap[\![S]\!]^{\perp}$. Invoking \cite[Proposition 3.33]{BS00} with $K=\mathbb{S}_{-}^n$ and $\mathbb{S}^n\ni Z\mapsto G(Z):=-Z$ leads to $\mathcal{T}_{\mathbb{S}_{+}^n}^2(X,W)=-\mathcal{T}_{\mathbb{S}_{-}^n}^2(-X,-W)$. Then, 
  \begin{align*}
  \sigma_{\mathcal{T}_{\mathbb{S}_{+}^n}^2(X,W)}(S)
  &=\sup_{Z\in\mathcal{T}_{\mathbb{S}_{+}^n}^2(X,W)}\langle -Z,-S\rangle=\sup_{Z'\in\mathcal{T}_{\mathbb{S}_{-}^n}^2(-X,-W)}\langle Z',-S\rangle\\
  &=\sigma_{\mathcal{T}_{\mathbb{S}_{-}^n}^2(-X,-W)}(-S)=2\langle S, WX^{\dagger}W\rangle\\
  &=2\langle P^{\top}SP,\widetilde{W}P^{\top}X^{\dagger}P\widetilde{W}\rangle=2\sum\limits_{i\in \alpha,j\in \gamma}\frac{\lambda_j(S)}{\lambda_i(X)}(\widetilde{W}_{ij})^2,
 \end{align*}
 where the sixth equality is obtained by using equation \eqref{XSeig}. 
 \end{proof}
 \subsection{Second subderivative}\label{sec2.2} 

 To introduce the second subderivative of an extended real-valued function $h\!:\mathbb{X}\to\mathbb{\overline{R}}$, we define its 
 second-order difference quotient at a point $x$ for a vector $v$ by 
 \begin{equation*}
 \Delta^2_{\tau}h(x|v)(w')
  :=\frac{h(x\!+\!\tau w')-h(x)-\tau\langle v,w'\rangle}{\tau^2/2}\quad{\rm for}\ \tau>0.
 \end{equation*} 
 \begin{definition}\label{Def-Ssubderiv}
 (\cite[Definitions 13.3 $\&$ 13.6]{RW98}) Consider a proper $h\!:\mathbb{X}\to\mathbb{\overline{R}}$, a point $x\in{\rm dom}\,h$ and a vector $v\in\mathbb{X}$. The second subderivative of $h$ at $x$ for $v$ is defined as
 \[
   d^2h(x|v)(w):=\liminf_{\tau\downarrow 0\atop w'\to w}\Delta^2_{\tau}h(x|v)(w')\quad\ \forall w\,\in\mathbb{X}.
 \]
 \end{definition}

 By \cite[Proposition 13.5]{RW98}, the function $d^2h(x|v)$ is lsc and positively homogenous of degree $2$; and if it is proper, the inclusion ${\rm dom}\,d^2h(x|v)\!\subset\!\big\{w\!\in\!\mathbb{X}\,|\,dh(x)(w)\!=\!\langle v,w\rangle\big\}$ holds.

\begin{lemma}\label{d2Ab-lemma}
 Consider any $X\!\in\mathcal{A}^{-1}(b)$ and $S\in\mathcal{N}_{\mathcal{A}^{-1}(b)}(X)$. Then, for any $W\in\mathbb{S}^n$, 
 \[
   d^2\delta_{\mathcal{A}^{-1}(b)}(X|S)(W)=\delta_{{\rm Ker}\mathcal{A}}(W).
 \]
\end{lemma}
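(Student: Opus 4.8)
The plan is to argue directly from Definition \ref{Def-Ssubderiv}, exploiting that $\Omega:=\mathcal{A}^{-1}(b)$ is an affine subspace. First I would record two elementary facts. Since $X\in\Omega$, we have $\delta_{\Omega}(X)=0$. Since $\Omega=X+{\rm Ker}\,\mathcal{A}$, its normal cone at $X$ is $\mathcal{N}_{\Omega}(X)=({\rm Ker}\,\mathcal{A})^{\perp}={\rm Im}\,\mathcal{A}^*$, so $S\in\mathcal{N}_{\mathcal{A}^{-1}(b)}(X)$ implies $\langle S,W'\rangle=0$ for every $W'\in{\rm Ker}\,\mathcal{A}$.

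Next I would compute the second-order difference quotient $\Delta^2_{\tau}\delta_{\Omega}(X|S)(W')$ for $\tau>0$. If $W'\in{\rm Ker}\,\mathcal{A}$, then $\mathcal{A}(X+\tau W')=\mathcal{A}X+\tau\mathcal{A}W'=b$, hence $X+\tau W'\in\Omega$ and $\delta_{\Omega}(X+\tau W')=0$; combined with $\langle S,W'\rangle=0$ and $\delta_{\Omega}(X)=0$, this yields $\Delta^2_{\tau}\delta_{\Omega}(X|S)(W')=0$. If instead $W'\notin{\rm Ker}\,\mathcal{A}$, then $X+\tau W'\notin\Omega$, so $\delta_{\Omega}(X+\tau W')=+\infty$ and therefore $\Delta^2_{\tau}\delta_{\Omega}(X|S)(W')=+\infty$. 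In particular every such quotient is nonnegative.

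Finally I would pass to the liminf over $\tau\downarrow 0$ and $W'\to W$. If $W\in{\rm Ker}\,\mathcal{A}$, taking the constant sequence $W'\equiv W$ gives the value $0$ for every $\tau>0$, so $d^2\delta_{\Omega}(X|S)(W)\le 0$; the nonnegativity of all the quotients gives the reverse inequality, whence $d^2\delta_{\Omega}(X|S)(W)=0=\delta_{{\rm Ker}\mathcal{A}}(W)$. If $W\notin{\rm Ker}\,\mathcal{A}$, then because ${\rm Ker}\,\mathcal{A}$ is a closed subspace of the finite-dimensional space $\mathbb{S}^n$, we have ${\rm dist}(W,{\rm Ker}\,\mathcal{A})>0$, so there is $\varepsilon>0$ with $\mathbb{B}(W,\varepsilon)\cap{\rm Ker}\,\mathcal{A}=\emptyset$; consequently every $W'$ with $\|W'-W\|<\varepsilon$ lies outside ${\rm Ker}\,\mathcal{A}$ and forces the quotient to be $+\infty$, so $d^2\delta_{\Omega}(X|S)(W)=+\infty=\delta_{{\rm Ker}\mathcal{A}}(W)$. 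This exhausts all $W\in\mathbb{S}^n$ and finishes the proof. (Alternatively, one may simply invoke the standard formula $d^2\delta_{\Omega}(X|S)=\delta_{\mathcal{C}_{\Omega}(X,S)}$ for the indicator of a polyhedral convex set, noting that here $\mathcal{C}_{\Omega}(X,S)=\mathcal{T}_{\Omega}(X)\cap[\![S]\!]^{\perp}={\rm Ker}\,\mathcal{A}\cap[\![S]\!]^{\perp}={\rm Ker}\,\mathcal{A}$ since $S\perp{\rm Ker}\,\mathcal{A}$.)

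There is no substantive obstacle here: the argument is routine. The only point that warrants a little care is that the liminf is taken jointly in $\tau$ and $W'$, so in the case $W\notin{\rm Ker}\,\mathcal{A}$ one must use the \emph{closedness} of ${\rm Ker}\,\mathcal{A}$ — not merely $W\notin{\rm Ker}\,\mathcal{A}$ — to guarantee that all nearby $W'$, and not just $W$ itself, produce the value $+\infty$.
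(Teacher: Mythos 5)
Your proof is correct, and it takes a genuinely different route from the paper's. You compute $d^2\delta_{\mathcal{A}^{-1}(b)}(X|S)$ directly from Definition \ref{Def-Ssubderiv}: since $\mathcal{A}^{-1}(b)=X+{\rm Ker}\,\mathcal{A}$ is affine and $S\in{\rm Im}\,\mathcal{A}^*=({\rm Ker}\,\mathcal{A})^{\perp}$, every second-order difference quotient is either $0$ (when $W'\in{\rm Ker}\,\mathcal{A}$) or $+\infty$ (otherwise), and the closedness of ${\rm Ker}\,\mathcal{A}$ handles the joint liminf in the case $W\notin{\rm Ker}\,\mathcal{A}$ — a point you rightly flag as the only one needing care. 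The paper instead routes the computation through the parabolic-regularity calculus of Mohammadi, Mordukhovich and Sarabi: it uses the polyhedrality of $\mathcal{A}^{-1}(b)$ to get parabolic derivability, identifies ${\rm dom}\,d^2\delta_{\mathcal{A}^{-1}(b)}(X|S)$ with the critical cone $\mathcal{C}_{\mathcal{A}^{-1}(b)}(X,S)={\rm Ker}\,\mathcal{A}$ via \cite[Theorem 3.3]{Moha2020}, and then evaluates the second subderivative on ${\rm Ker}\,\mathcal{A}$ by the chain rule of \cite[Theorem 5.6]{Moha2020} with $f(X)=\mathcal{A}X-b$ and $\Theta=\{0\}^m$, where the relevant support function vanishes because $\mathcal{T}^2_{\Theta}(\mathcal{A}X-b,\mathcal{A}W)=\{0\}$. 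Your argument is more elementary and fully self-contained, which is a genuine advantage for this simple polyhedral (indeed affine) set; the paper's version buys consistency with the toolkit it deploys throughout (the same results from \cite{Moha2020} reappear in Proposition \ref{Gamset-prop}), at the cost of invoking machinery that is overkill here. Your parenthetical alternative — $d^2\delta_{\Omega}(X|S)=\delta_{\mathcal{C}_{\Omega}(X,S)}$ for polyhedral $\Omega$ together with $\mathcal{C}_{\Omega}(X,S)={\rm Ker}\,\mathcal{A}\cap[\![S]\!]^{\perp}={\rm Ker}\,\mathcal{A}$ — is essentially the paper's argument in compressed form.
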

\begin{proof}
 Fix any $W\in\mathbb{S}^n$. The polyhedrality of $\mathcal{A}^{-1}(b)$ implies that it is parabolically derivable at $X$. Invoking \cite[Theorem 3.3 (i)]{Moha2020} with $\Omega=\mathcal{A}^{-1}(b)$ results in 
 \[
  {\rm dom}\,d^2\delta_{\mathcal{A}^{-1}(b)}(X|S)=\mathcal{C}_{\mathcal{A}^{-1}(b)}(X,S)={\rm Ker}\mathcal{A}\cap [\![S]\!]^\perp. 
 \]
 As $S\in\mathcal{N}_{\mathcal{A}^{-1}(b)}(X)={\rm Im}\mathcal{A}^*$, we have ${\rm Ker}\mathcal{A}\cap [\![S]\!]^\perp={\rm Ker}\mathcal{A}$. Together with the above equation, when $W\notin{\rm Ker}\mathcal{A}$, $d^2\delta_{\mathcal{A}^{-1}(b)}(X|S)(W)=\infty$. When $W\!\in {\rm Ker}\mathcal{A}$, applying \cite[Theorem 5.6 \& Eq. (5.2)]{Moha2020} with $f(X)=\mathcal{A}X-b$ and $\Theta=\{0\}^m$ yields that
 \begin{align*}
 d^2\delta_{\mathcal{A}^{-1}(b)}(X|S)(W)&=\max_{y\in(\mathcal{A}^*)^{-1}(S)}d^2 \delta_{\Theta}(\mathcal{A}X\!-\!b\,|\,y)(\mathcal{A}W)\\
 &=\max_{y\in(\mathcal{A}^*)^{-1}(S)}{-\sigma_{\mathcal{T}^2_{\Theta}(\mathcal{A}X-b,\mathcal{A}W)}(y)}=0,
\end{align*}
 where the second equality is by \cite[Theorem 3.3]{Moha2020}, and the third is implied by $\mathcal{T}^2_{\Theta}(\mathcal{A}X\!-\!b,\mathcal{A}W)=\{0\}$ because $\Theta=\{0\}^m$ and $\mathcal{A}W=0$. By the arbitrariness of $W\in\mathbb{S}^n$, we get the desired equality. 
 \end{proof}
\subsection{Metric (sub)regularity of multifunctions}\label{sec2.3}

 Let $\mathcal{F}\!:\mathbb{X}\rightrightarrows\mathbb{Z}$ be a multifunction. Denote its domain by ${\rm dom}\,\mathcal{F}:=\{x\in\mathbb{X}\,|\,\mathcal{F}(x)\ne\emptyset\}$. The mapping $\mathcal{F}$ is closed if its graph, denoted by ${\rm gph}\,\mathcal{F}\!:=\big\{(x,z)\in\mathbb{X}\times\mathbb{Z}\,|\,z\in\mathcal{F}(x)\big\}$, is a closed set in $\mathbb{X}\times\mathbb{Z}$. We say that the mapping $\mathcal{F}$ is outer semicontinuous (osc) at $x\in\mathbb{X}$ if $\limsup_{x'\to x}\mathcal{F}(x')\subset\mathcal{F}(x)$, and say that $\mathcal{F}$ is osc if it is osc everywhere. 
\begin{definition}\label{subregular}
 A multifunction $\mathcal{F}\!:\mathbb{X}\rightrightarrows\mathbb{Z}$ is said to be (metrically) regular at $(x,z)\in{\rm gph}\,\mathcal{F}$ if there exist $\delta>0$ and $\kappa>0$ such that for all $(x',z')\in\mathbb{B}((x,z),\delta)$, 
 \[
 {\rm dist}(x',\mathcal{F}^{-1}(z'))\le\kappa\,{\rm dist}(z',\mathcal{F}(x')), 
 \]
 while the multifunction $\mathcal{F}$ is said to be (metrically) subregular at a point $(x,z)\in{\rm gph}\,\mathcal{F}$ if there exist $\delta>0$ and $\kappa>0$ such that for all $x'\in\mathbb{B}(x,\delta)$, 
 \[
  {\rm dist}(x',\mathcal{F}^{-1}(z))\le\kappa\,{\rm dist}(z,\mathcal{F}(x')).
 \]
 \end{definition}
 \begin{remark}\label{remark1-regular}
 {\bf(a)} The metric regularity of $\mathcal{F}$ is robust in the sense that if $\mathcal{F}$ is metrically regular at $(\overline{x},\overline{y})\in{\rm gph}\mathcal{F}$, then there exists a neighborhood $\mathcal{Z}$ of $(\overline{x},\overline{y})$ such that the mapping $\mathcal{F}$ is metrically regular at any $(x,y)\in\mathcal{Z}$ with $y\in\mathcal{F}(x)$. 
 
 \noindent
 {\bf(b)} The metric subregularity of $\mathcal{F}$ is robust in the sense that if $\mathcal{F}$ is metrically subregular at $(\overline{x},\overline{y})\in{\rm gph}\mathcal{F}$, then there exists a neighborhood $\mathcal{U}$ of $\overline{x}$ such that the mapping $\mathcal{F}$ is metrically subregular at any $x\in\mathcal{U}$ with $\overline{y}\in\mathcal{F}(x)$. 
 \end{remark}

 For the metric regularity of constraint systems, we have the following conclusion. 
 \begin{lemma}\label{regular-approx}
  Let $G\!:\mathbb{X}\to\mathbb{Y}$ be a continuously differentiable mapping, and let $K\subset\mathbb{Y}$ be a closed convex set. Consider the multifunction $\mathcal{F}(x)\!:=G(x)-K$ for $x\in\mathbb{X}$. If the mapping $\mathcal{F}$ is metrically regular at $(\overline{x},0)\in{\rm gph}\,\mathcal{F}$, then for any given $d\in\mathbb{Y}$, the mapping $\widetilde{\mathcal{F}}_{\overline{x},d}(u)\!:=G'(\overline{x})u-d-\mathcal{T}_{K}(G(\overline{x}))$ is metrically regular at $(\overline{u},0)\in{\rm gph}\,\widetilde{\mathcal{F}}_{\overline{x},d}$. 
 \end{lemma}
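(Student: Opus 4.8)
The plan is to reduce the statement to the Robinson–Ursescu-type stability of metric regularity under partial linearization, using the characterization of metric regularity in terms of the Mordukhovich coderivative (or, equivalently, in terms of the no-nonzero-abnormal-multiplier condition / Robinson's CQ). First I would recall that for a mapping of the form $\mathcal{F}(x)=G(x)-K$ with $G$ continuously differentiable and $K$ closed convex, metric regularity of $\mathcal{F}$ at $(\overline{x},0)$ is equivalent to Robinson's constraint qualification at $\overline{x}$, namely
\[
 0\in{\rm int}\big\{G(\overline{x})+G'(\overline{x})\mathbb{X}-K\big\},
\]
or, in dual form, to the condition that the only $y^*$ with $G'(\overline{x})^*y^*=0$ and $y^*\in\mathcal{N}_{K}(G(\overline{x}))$ is $y^*=0$. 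This is standard (see \cite[Theorem 2.87]{BS00} or \cite[Example 9.44]{RW98}).

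Next I would observe that the linearized mapping $\widetilde{\mathcal{F}}_{\overline{x},d}(u)=G'(\overline{x})u-d-\mathcal{T}_{K}(G(\overline{x}))$ is itself of the form "continuously differentiable (indeed affine) mapping minus closed convex set": the set $\mathcal{T}_{K}(G(\overline{x}))$ is a closed convex cone because $K$ is convex, and $u\mapsto G'(\overline{x})u-d$ is affine with derivative $G'(\overline{x})$ everywhere. Hence the same equivalence applies: $\widetilde{\mathcal{F}}_{\overline{x},d}$ is metrically regular at $(\overline{u},0)$ if and only if the only $y^*$ with $G'(\overline{x})^*y^*=0$ and $y^*\in\mathcal{N}_{\mathcal{T}_{K}(G(\overline{x}))}\big(G'(\overline{x})\overline{u}-d\big)$ is $y^*=0$. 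The key linear-algebra fact to invoke is that the normal cone to a tangent cone is contained in the normal cone to the original set in the appropriate sense: precisely, $\mathcal{N}_{\mathcal{T}_{K}(G(\overline{x}))}(w)\subset\mathcal{N}_{K}(G(\overline{x}))$ for every $w\in\mathcal{T}_{K}(G(\overline{x}))$, since $\mathcal{T}_K(G(\overline x))$ is a convex cone whose polar is $\mathcal{N}_K(G(\overline x))$ and the normal cone at any of its points is a subset of its polar. Therefore any abnormal multiplier for $\widetilde{\mathcal{F}}_{\overline{x},d}$ at $(\overline u,0)$ is automatically an abnormal multiplier for $\mathcal{F}$ at $(\overline x,0)$; since the latter is zero by hypothesis, so is the former, and metric regularity of $\widetilde{\mathcal{F}}_{\overline{x},d}$ follows.

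Alternatively — and this is the route I would actually write out if I wanted to avoid quoting the dual characterization — one can argue directly from Robinson's CQ in primal form: metric regularity of $\mathcal{F}$ at $(\overline x,0)$ gives $G'(\overline x)\mathbb{X}-\mathcal{T}_K(G(\overline x))=\mathbb{Y}$ (surjectivity of the linearization onto the whole space modulo the tangent cone), and this is exactly the primal Robinson condition for the affine mapping $u\mapsto G'(\overline x)u-d$ relative to the convex cone $\mathcal{T}_K(G(\overline x))$, which by \cite[Corollary 2.101]{BS00} (or the Robinson–Ursescu theorem for affine mappings) yields metric regularity of $\widetilde{\mathcal{F}}_{\overline x,d}$ at every point of its graph, in particular at $(\overline u,0)$; note the translation by $-d$ and the particular point $\overline u$ are irrelevant because the mapping is affine and a convex cone is invariant under such translations of the argument. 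The main obstacle is purely bookkeeping: making sure the passage from "metric regularity of $G(\cdot)-K$" to "the linearization hits all of $\mathbb{Y}$" is stated with the correct one of the several equivalent formulations of Robinson's CQ (there is a well-known subtlety that $G'(\overline x)\mathbb{X}+\mathcal{T}_K(G(\overline x))=\mathbb{Y}$ rather than $-\mathcal{T}_K$, but $\mathcal{T}_K$ is not a subspace in general so one must keep $-\mathcal{T}_K(G(\overline x))$; however $G'(\overline x)\mathbb{X}$ is a subspace, so $G'(\overline x)\mathbb{X}-\mathcal{T}_K(G(\overline x))=G'(\overline x)\mathbb{X}+\mathcal{T}_K(G(\overline x))$ and the two forms coincide), and then invoking the affine-case metric regularity theorem with the cone $\mathcal{T}_K(G(\overline x))$ in place of $K$.
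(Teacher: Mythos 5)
Your proposal is correct and is essentially the paper's argument: the paper passes both metric regularities to their primal Robinson-type conditions via \cite[Propositions 2.89 \& 2.97]{BS00} and uses the containment $\mathcal{T}_{K}(G(\overline{x}))\subset\mathcal{T}_{\mathcal{T}_{K}(G(\overline{x}))}(G'(\overline{x})\overline{u}-d)$ from \cite[Example 2.62]{BS00}, which is exactly the polar of the normal-cone inclusion $\mathcal{N}_{\mathcal{T}_{K}(G(\overline{x}))}(w)\subset\mathcal{N}_{K}(G(\overline{x}))$ driving your dual route, while your alternative primal route coincides with the paper's almost verbatim. Both of your variants are sound (the only nitpick is that $G'(\overline{x})\mathbb{X}-\mathcal{T}_{K}(G(\overline{x}))$ and $G'(\overline{x})\mathbb{X}+\mathcal{T}_{K}(G(\overline{x}))$ are negatives of each other rather than equal sets, though the two conditions ``$=\mathbb{Y}$'' are indeed equivalent).
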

 \begin{proof}
  By \cite[Proposition 2.89 $\&$ 2.97]{BS00}, the metric regularity of $\widetilde{\mathcal{F}}_{\overline{x},d}$ at $(\overline{u},0)$ is equivalent to requiring that
  \begin{equation}\label{aim-regular}
   G'(\overline{x})\mathbb{X}-\mathcal{T}_{\mathcal{T}_{K}(G(\overline{x}))}(G'(\overline{x})\overline{u}-d)=\mathbb{Y}; 
  \end{equation}
  while the metric regularity of the mapping $\mathcal{F}$ at $(\overline{x},0)$ is equivalent to the condition 
  \begin{equation}\label{temp-regular}
    G'(\overline{x})\mathbb{X}-\mathcal{T}_{K}(G(\overline{x}))=\mathbb{Y}. 
  \end{equation}
 From \cite[Example 2.62]{BS00}, we have $\mathcal{T}_{\mathcal{T}_{K}(G(\overline{x}))}(G'(\overline{x})\overline{u}\!-\!d)={\rm cl}(\mathcal{T}_{K}(G(\overline{x}))+[\![G'(\overline{x})\overline{u}\!-\!d]\!])$, which means that $\mathcal{T}_{K}(G(\overline{x}))\subset\mathcal{T}_{\mathcal{T}_{K}(G(\overline{x}))}(G'(\overline{x})\overline{u}\!-\!d)$. Thus, equality \eqref{temp-regular} implies \eqref{aim-regular}. \end{proof}
\begin{remark}\label{remark2-regular}
{\bf(a)} Consider any $x\in\Gamma$. For any given $\widetilde{b}\in\mathbb{R}^m$ and $\widetilde{C}\in\mathbb{S}^n$, define the multifunctions $\mathcal{G}\!:\mathbb{X}\rightrightarrows\mathbb{R}^m\times\mathbb{S}^n$ and $\mathcal{G}_{x}\!:\mathbb{X}\rightrightarrows\mathbb{R}^m\times\mathbb{S}^n$ by
 \begin{align}\label{Gmap}
 \mathcal{G}(z):=\begin{pmatrix}
  \mathcal{A}z-b\\ g(z)
 \end{pmatrix}-\begin{pmatrix}
   \{0\}^m\\ \mathbb{S}_{+}^n
   \end{pmatrix}\ {\rm and}\    
 \widetilde{\mathcal{G}}_{x}(z):=\begin{pmatrix}
  \mathcal{A}z-\widetilde{b}\\ g'(x)z-\widetilde{C}
 \end{pmatrix} -\begin{pmatrix}
   \{0\}^m\\ \mathcal{T}_{\mathbb{S}_{+}^n}(g(x))
 \end{pmatrix}.   
\end{align}
 By Lemma \ref{regular-approx}, if the mapping $\mathcal{G}$ is metrically regular at $(x,(0,0))$, so is the mapping $\widetilde{\mathcal{G}}_x$ at any $(z,(0,0))\in{\rm gph}\,\widetilde{\mathcal{G}}_x$. By \cite[Proposition 2.106]{RW98}, when the following multifunction 
 \begin{equation}\label{Hmap}
  \mathcal{H}(z)\!:=g(z)-\mathbb{S}_{+}^n\quad{\rm for}\ z\in\mathbb{X}
 \end{equation}
 is convex, the mapping $\mathcal{G}$ is metrically regular at every $x\in\Gamma$ if and only if the Slater's CQ holds for problem \eqref{pNSDP}, i.e., there exists $\widehat{x}\in\mathbb{X}$ such that $\mathcal{A}\widehat{x}=b$ and $g(\widehat{x})\in{\rm int}(\mathbb{S}_{+}^n)$. 

\noindent
 {\bf(b)} By invoking \cite[Proposition 2.89 $\&$ 2.97]{BS00}, the metric regularity of $\mathcal{G}$ at $(\overline{x},(0,0))$ implies that of the multifunction $\mathcal{H}$ at $(\overline{x},0)$, so the metric subregularity of $\mathcal{H}$ at $(\overline{x},0)$. 
 
 \end{remark} 

 To close this section, we take a closer look at the local boundedness of multifunction
 \begin{equation}\label{Mmap}
 \!\mathcal{M}(x,v)\!:=\left\{\begin{array}{cl}
  \!\!\big\{(y,S)\in\mathbb{R}^m\times\mathcal{N}_{\mathbb{S}_{+}^n}(g(x))\ |\ \mathcal{A}^*y+\!\nabla g(x)S=v\big\}&{\rm if}\ (x,v)\in{\rm gph}\mathcal{N}_{\Gamma},\\
   \emptyset &{\rm otherwise}.
 \end{array}\right.   
 \end{equation}
 Recall that a multifunction $\mathcal{F}\!:\mathbb{X}\rightrightarrows\mathbb{Z}$ is locally bounded at a point $x\in\mathbb{X}$ if for some neighorhood of $\mathcal{V}$ of $x$, the set
 $\mathcal{F}(\mathcal{V}):=\bigcup_{x\in\mathcal{V}}\mathcal{F}(x)$ is bounded.
 \begin{proposition}\label{Mmap-prop}
  The multifunction $\mathcal{M}\!:\mathbb{X}\times\mathbb{X}\rightrightarrows\mathbb{R}^m\times\mathbb{S}^n$ defined in \eqref{Mmap} is closed. If the mapping $\mathcal{G}$ defined in \eqref{Gmap} is metrically regular at $(x,(0,0))$, then for all $v\in\mathcal{N}_{\Gamma}(x)$ the mapping $\mathcal{M}$ is locally bounded at $(x,v)$; conversely, if the mapping $\mathcal{M}$ is locally bounded at $(x,0)$, then the mapping $\mathcal{G}$ is metrically regular at $(x,(0,0))$.
 \end{proposition}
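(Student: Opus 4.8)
The plan is to handle the three claims in turn, using throughout the outer semicontinuity of the normal-cone mappings $\mathcal{N}_{\Gamma}$ and $\mathcal{N}_{\mathbb{S}^n_+}$, the continuity of $x\mapsto\nabla g(x)$ (a consequence of $g\in\mathcal{C}^2$), and the dual point-based form of Robinson's constraint qualification.

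\textbf{Closedness of $\mathcal{M}$.} First I would take an arbitrary sequence $\{(x^k,v^k,y^k,S^k)\}\subset{\rm gph}\,\mathcal{M}$ converging to $(x,v,y,S)$ and check that the three conditions defining $\mathcal{M}(x,v)$ all pass to the limit. The membership $(x^k,v^k)\in{\rm gph}\,\mathcal{N}_{\Gamma}$ yields $(x,v)\in{\rm gph}\,\mathcal{N}_{\Gamma}$ because $\Gamma$ is closed and hence $\mathcal{N}_{\Gamma}$ is osc; the relation $S^k\in\mathcal{N}_{\mathbb{S}^n_+}(g(x^k))$ yields $S\in\mathcal{N}_{\mathbb{S}^n_+}(g(x))$ by continuity of $g$ together with outer semicontinuity of $\mathcal{N}_{\mathbb{S}^n_+}$; and letting $k\to\infty$ in $\mathcal{A}^*y^k+\nabla g(x^k)S^k=v^k$, using that $\nabla g(\cdot)$ is continuous and $\{S^k\}$ is bounded, gives $\mathcal{A}^*y+\nabla g(x)S=v$. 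Hence $(y,S)\in\mathcal{M}(x,v)$ and ${\rm gph}\,\mathcal{M}$ is closed.

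\textbf{Metric regularity of $\mathcal{G}$ implies local boundedness of $\mathcal{M}$.} The crucial first step is to record that, by \cite[Proposition 2.89 $\&$ 2.97]{BS00}, the metric regularity of $\mathcal{G}$ at $(x,(0,0))$ is equivalent to the point-based condition that $(0,0)$ is the \emph{only} pair $(y,S)\in\mathbb{R}^m\times\mathcal{N}_{\mathbb{S}^n_+}(g(x))$ with $\mathcal{A}^*y+\nabla g(x)S=0$. Then I would argue by contradiction: if $\mathcal{M}$ fails to be locally bounded at some $(x,v)$ with $v\in\mathcal{N}_{\Gamma}(x)$, there are $(x^k,v^k)\to(x,v)$ and $(y^k,S^k)\in\mathcal{M}(x^k,v^k)$ with $r_k:=\|(y^k,S^k)\|\to\infty$; passing to a subsequence, $(y^k,S^k)/r_k\to(\overline{y},\overline{S})$ with $\|(\overline{y},\overline{S})\|=1$. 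Dividing $\mathcal{A}^*y^k+\nabla g(x^k)S^k=v^k$ by $r_k$ and passing to the limit yields $\mathcal{A}^*\overline{y}+\nabla g(x)\overline{S}=0$, while $S^k/r_k\in\mathcal{N}_{\mathbb{S}^n_+}(g(x^k))$ (a cone), combined with outer semicontinuity, gives $\overline{S}\in\mathcal{N}_{\mathbb{S}^n_+}(g(x))$. This nonzero pair violates the point-based condition, a contradiction.

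\textbf{Converse, and the main obstacle.} Assume $\mathcal{M}$ is locally bounded at $(x,0)$; since the conclusion concerns metric regularity of $\mathcal{G}$ at a point of its graph, we may take $x\in\Gamma$, so $0\in\mathcal{N}_{\Gamma}(x)$. If $\mathcal{G}$ were not metrically regular at $(x,(0,0))$, the point-based condition above would fail, producing a nonzero $(\overline{y},\overline{S})$ with $\overline{S}\in\mathcal{N}_{\mathbb{S}^n_+}(g(x))$ and $\mathcal{A}^*\overline{y}+\nabla g(x)\overline{S}=0$; since $\mathcal{N}_{\mathbb{S}^n_+}(g(x))$ is a cone, $(t\overline{y},t\overline{S})\in\mathcal{M}(x,0)$ for every $t\ge 0$, so $\mathcal{M}(x,0)$ is already unbounded, contradicting local boundedness. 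I expect the forward implication to be the main obstacle, and within it the key nonroutine ingredient is invoking the correct equivalence between metric regularity of $\mathcal{G}$ and the point-based ``no nonzero singular multiplier'' condition rather than the primal surjectivity condition used in Lemma~\ref{regular-approx}; the closedness and the converse are comparatively routine once this equivalence is in hand.
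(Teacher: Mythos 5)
Your proposal is correct and follows essentially the same route as the paper: the same reduction of metric regularity of $\mathcal{G}$ to the point-based condition that no nonzero $(y,S)$ with $S\in\mathcal{N}_{\mathbb{S}^n_+}(g(x))$ satisfies $\mathcal{A}^*y+\nabla g(x)S=0$ (via \cite[Propositions 2.89 \& 2.97]{BS00}), the same normalization-and-subsequence contradiction for the forward implication, and the same ray argument $t(\overline{y},\overline{S})\in\mathcal{M}(x,0)$ for the converse. The only cosmetic difference is that you prove closedness directly as closedness of ${\rm gph}\,\mathcal{M}$, whereas the paper verifies the outer-limit inclusion pointwise with a separate case for $\mathcal{M}(x,v)=\emptyset$; these are equivalent under the paper's definition of a closed multifunction.
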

\begin{proof}
 Fix any $(x,v)\in\mathbb{X}\times\mathbb{X}$. To prove that $\mathcal{M}$ is closed at $(x,v)$, it suffices to argue that $\limsup_{(x',v')\to(x,v)}\mathcal{M}(x',v')\subset\mathcal{M}(x,v)$. If $\mathcal{M}(x,v)=\emptyset$, then $(x,v)\notin{\rm gph}\,\mathcal{N}_{\Gamma}$. From the closedness of ${\rm gph}\mathcal{N}_{\Gamma}$ or equivalently the outer semicontinuity of $\mathcal{N}_{\Gamma}$, there exists $\delta>0$ such that for any $(x',v')\in\mathbb{B}((x,v),\delta)$, $(x',v')\notin{\rm gph}\,\mathcal{N}_{\Gamma}$ and then $\mathcal{M}(x',v')=\emptyset$. This implies that $\limsup_{(x',v')\to(x,v)}\mathcal{M}(x',v')\subset\mathcal{M}(x,v)$. Next we consider that $\mathcal{M}(x,v)\ne\emptyset$. Pick any $(y,S)\in\limsup_{(x',v')\to(x,v)}\mathcal{M}(x',v')$. Then, there exist $(x^k,v^k)\to (x,v)$ and $(y^k,S^k)\to(y,S)$ with $(y^k,S^k)\in\mathcal{M}(x^k,v^k)$ for all $k$. By the definition of $\mathcal{M}$, for any $k$,
 \[
  (x^k,v^k)\in{\rm gph}\,\mathcal{N}_{\Gamma}\ \ {\rm and}\ \ (y^k,S^k)\in\mathbb{R}^m\times\mathcal{N}_{\mathbb{S}_{+}^n}(g(x^k))\ {\rm with}\ \mathcal{A}^*y^k\!+\!\nabla g(x^k)S^k=v^k.
 \]
 The outer semicontinuity of normal cone mappings $\mathcal{N}_{\Gamma}$ and $\mathcal{N}_{\mathbb{S}_{+}^n}$ implies that $(x,v)\in{\rm gph}\,\mathcal{N}_{\Gamma}$ and $S\in\mathcal{N}_{\mathbb{S}_{+}^n}(g(x))$ with $\mathcal{A}^*y\!+\nabla g(x)S=v$. Thus, $(y,S)\in\mathcal{M}(x,v)$. Therefore, $\limsup_{(x',v')\to(x,v)}\mathcal{M}(x',v')\subset\mathcal{M}(x,v)$, and the closedness of $\mathcal{M}$ follows.

 Now assume that $\mathcal{G}$ is metrically regular at $(x,(0,0))$, which by \cite[Propositions 2.89 \& 2.97]{BS00} is equivalent to saying that the following implication holds:
 \begin{equation}\label{imply1}
 \mathcal{A}^*u+\nabla g(x)H=0,\,u\in\mathbb{R}^m,\,H\in\mathcal{N}_{\mathbb{S}_{+}^n}(g(x))\ \Longrightarrow\ (u,H)=0.
 \end{equation}
 Fix any $v\!\in\mathcal{N}_{\Gamma}(x)$. Suppose on the contrary that $\mathcal{M}$ is not locally bounded at $(x,v)$. There exist a sequence $\{(x^k,v^k)\}_{k\in\mathbb{N}}$ with $(x^k,v^k)\to (x,v)$ as $k\to\infty$ and an unbounded sequence $\{(y^k,S^k)\}_{k\in\mathcal{K}}$ with $(y^k,S^k)\in\mathcal{M}(x^k,v^k)$ for each $k\in\mathcal{K}$, where $\mathcal{K}\subset\mathbb{N}$ is an infinite index set. Clearly, for each $k\in\mathcal{K}$, $S^k\in\mathcal{N}_{\mathbb{S}_{+}^n}(g(x^k))$ and $\mathcal{A}^*y^k\!+\nabla g(x^k)S^k=v^k$. For each $k\in\mathcal{K}$, write $\widetilde{S}^k\!:=\frac{S^k}{\sqrt{\|S^k\|_F^2+\|y^k\|^2}},\widetilde{y}^k\!:=\frac{y^k}{\sqrt{\|S^k\|_F^2+\|y^k\|^2}}$ and $\widetilde{v}^k\!:=\frac{v^k}{\sqrt{\|S^k\|_F^2+\|y^k\|^2}}$. Then,  $\widetilde{S}^k\in\mathcal{N}_{\mathbb{S}_{+}^n}(g(x^k))$ and $\mathcal{A}^*\widetilde{y}^k\!+\!\nabla g(x^k)\widetilde{S}^k=\widetilde{v}^k$ for each $k\in\mathcal{K}$. 
 Note that $\{\widetilde{S}^k\}_{k\in\mathbb{N}}$ and $\{\widetilde{y}^k\}_{k\in\mathbb{N}}$ are bounded. If necessary by taking a subsequence, we can assume that $\lim_{\mathcal{K}\ni k\to\infty}\widetilde{S}^k=\widetilde{S}$ and $\lim_{\mathcal{K}\ni k\to\infty}\widetilde{y}^k=\widetilde{y}$ with $\|\widetilde{S}\|_F^2+\|\widetilde{y}\|^2=1$. The outer semicontinuity of $\mathcal{N}_{\mathbb{S}_{+}^n}$ implies that $\widetilde{S}\in\mathcal{N}_{\mathbb{S}_{+}^n}(g(x))$. Furthermore, passing the limit $\mathcal{K}\ni k\to\infty$ to $\mathcal{A}^*\widetilde{y}^k\!+\!\nabla g(x^k)\widetilde{S}^k=\widetilde{v}^k$ and using the boundedness of $\{v^k\}_{k\in\mathbb{N}}$ and the unboundedness of $\big\{\sqrt{\|S^k\|_F^2+\|y^k\|^2}\big\}_{k\in\mathcal{K}}$ leads to $\mathcal{A}^*\widetilde{y}+\nabla g(x)\widetilde{S}=0$. Thus, $\mathcal{A}^*\widetilde{y}+\nabla g(x)\widetilde{S}=0$ with $\widetilde{S}\in\mathcal{N}_{\mathbb{S}_{+}^n}(g(x))$ and $\widetilde{y}\in\mathbb{R}^m$ satisfying  $\|\widetilde{S}\|_F^2+\|\widetilde{y}\|^2=1$, a contradiction to \eqref{imply1}.  

 Conversely, it suffices to establish the implication \eqref{imply1}. Suppose on the contrary that it does not hold. There exists a nonzero $(u,H)\in\mathbb{R}^m\times\mathbb{S}^n$ such that $H\in\mathcal{N}_{\mathbb{S}_{+}^n}(g(x))$ with $\mathcal{A}^*u+\nabla g(x)H=0$.
 Clearly, $(u,H)\in\mathcal{M}(x,0)$, and for every $t>0$, $t(u,H)\in\mathcal{M}(x,0)$. This yields a contradiction to the local boundedness of $\mathcal{M}$ at $(x,0)$. 
 \end{proof}
\subsection{Tilt stability of nonconvex composite optimization}\label{sec2.4}

 We first recall from the work \cite{Poliquin98} the formal definition of tilt-stable local minimum.
\begin{definition}\label{Def-tilt}
 For a proper function $f\!:\mathbb{X}\to\overline{\mathbb{R}}$, a point $\overline{x}\in{\rm dom}\,f$ is said to be a tilt-stable local minimum of $f$ if there exists $\delta>0$ such that the solution mapping
 \[
  M_{\delta}(v):=\mathop{\arg\min}_{x\in\mathbb{B}(\overline{x},\delta)}\big\{f(x)-f(\overline{x})-\langle v,x-\overline{x}\rangle\big\}
 \]
 is single-valued and Lipschitz continuous on some neighborhood of $v\!=\!0$ with $M_{\delta}(0)\!=\!\{\overline{x}\}$.   	
 \end{definition}
  
 As mentioned in the introduction, Nghia \cite{Nghia24} used the second subderivative to provide a point-based characterization for the tilt stability of the minimization of the sum of a twice continuously differentiable convex function and a proper lsc convex function. Here, we are interested in the tilt stability of the nonconvex composite optimization
 \begin{equation}\label{composite-prob}
  \min_{x\in\mathbb{X}}\Theta(x):=\vartheta(x)+h(x),
 \end{equation}
 where $h\!:\mathbb{X}\to\overline{\mathbb{R}}$ is a proper lsc convex function, and $\vartheta\!:\mathbb{X}\to\mathbb{R}$ is a proper lsc function that is twice continuously differentiable on an open set $\mathcal{O}'\supset{\rm dom}\,h$. From Definition \ref{Def-Ssubderiv} and the twice continuous differentiablity of $\vartheta$ on the set $\mathcal{O}'$, the second subderivative of $\Theta$ at any $x\in{\rm dom}\,h$ for $v\in\partial\Theta(x)$ has the following form
 \begin{equation}\label{sderiv-Phi}
  d^2\Theta(x|v)(w)=\langle \nabla^2\vartheta(x)(w),w\rangle+d^2h(x\,|\,v\!-\!\nabla\vartheta(x))(w)\quad\forall w\in \mathbb{X}.
 \end{equation}  
 By combining equation \eqref{sderiv-Phi} with \cite[Lemma 2.4]{Nghia24} and \cite[Theorem 3.3]{ChienSiam18}, we can obtain the following characterization for the tilt-stable local minimizers of \eqref{composite-prob}. Since its proof can be found in \cite[Proposition 2.2]{LiuPan24}, we here omit it. 
 \begin{lemma}\label{Tiltlemma}
  Consider a local optimal solution $\overline{x}$ of \eqref{composite-prob}. The solution $\overline{x}$ is tilt-stable if $\nabla^2\vartheta(x)\succeq 0$ on an open neighborhood $\mathcal{U}\subset\mathcal{O}'$ of $\overline{x}$ and ${\rm Ker}\nabla^2\vartheta(\overline{x})\cap \mathcal{W}=\{0\}$ with
 \begin{align*}
 \mathcal{W}&=\Big\{w\in \mathbb{X}\ |\ \exists\,\{((x^k,y^k),w^k)\}\subset {\rm gph}\,\partial h\times\mathbb{X}\ {\rm with}\ \lim_{k\to\infty} (x^k,y^k,w^k)= (\overline{x},-\nabla \vartheta(\overline{x}),w)\\
 &\qquad\qquad\qquad\ {\rm such\ that}\ \lim_{k\to \infty}d^2 h(x^k|y^k)(w^k)=0\Big\}.
 \end{align*}
 Conversely, if the solution $\overline{x}$ is tilt-stable, then ${\rm Ker}\,\nabla^2\vartheta(\overline{x})\cap \mathcal{W}=\{0\}$. 
 \end{lemma}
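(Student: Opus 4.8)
The plan is to distill the two results quoted just before the statement into one sequence-based criterion for tilt stability of $\Theta=\vartheta+h$ and then to read that criterion off \eqref{sderiv-Phi}. First I would record that $\Theta$ is prox-regular and subdifferentially continuous at $\overline{x}$ for $\overline{v}=0\in\partial\Theta(\overline{x})$: the proper lsc convex function $h$ enjoys both properties on ${\rm dom}\,\partial h$, and adding the $\mathcal{C}^2$ (hence locally $\mathcal{C}^{1,1}$) function $\vartheta$ preserves them. Thus \cite[Theorem 3.3]{ChienSiam18} and \cite[Lemma 2.4]{Nghia24} are applicable, and combining them with \eqref{sderiv-Phi} reformulates tilt stability as follows: $\overline{x}$ is a tilt-stable local minimizer of $\Theta$ if and only if there is \emph{no} sequence $(x^k,v^k)\to(\overline{x},0)$ in ${\rm gph}\,\partial\Theta$ together with $w^k\to w\ne 0$ such that $d^2\Theta(x^k|v^k)(w^k)\to 0$. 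Since $w\mapsto d^2\Theta(x|v)(w)$ is positively homogeneous of degree $2$, one may normalize $\|w^k\|\equiv 1$, so this is precisely the failure of a uniform neighborhood-wise estimate $d^2\Theta(x|v)(w)\ge\kappa\|w\|^2$ on ${\rm gph}\,\partial\Theta$ near $(\overline{x},0)$.

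\textbf{Sufficiency.} Assume $\nabla^2\vartheta\succeq 0$ on $\mathcal{U}$ and ${\rm Ker}\,\nabla^2\vartheta(\overline{x})\cap\mathcal{W}=\{0\}$, and suppose for contradiction that a forbidden sequence as above exists. Put $y^k:=v^k-\nabla\vartheta(x^k)$, so $(x^k,y^k)\in{\rm gph}\,\partial h$ and $y^k\to-\nabla\vartheta(\overline{x})$; by \eqref{sderiv-Phi}, $d^2\Theta(x^k|v^k)(w^k)=\langle\nabla^2\vartheta(x^k)w^k,w^k\rangle+d^2h(x^k|y^k)(w^k)$. The subgradient inequality for convex $h$ forces $d^2h(x^k|y^k)(\cdot)\ge 0$, and eventually $x^k\in\mathcal{U}$ forces $\langle\nabla^2\vartheta(x^k)w^k,w^k\rangle\ge 0$, so both nonnegative terms tend to $0$. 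Continuity of $\nabla^2\vartheta$ then gives $\langle\nabla^2\vartheta(\overline{x})w,w\rangle=0$, whence $w\in{\rm Ker}\,\nabla^2\vartheta(\overline{x})$ by positive semidefiniteness, while $d^2h(x^k|y^k)(w^k)\to 0$ with $(x^k,y^k,w^k)\to(\overline{x},-\nabla\vartheta(\overline{x}),w)$ puts $w\in\mathcal{W}$. This produces a nonzero element of ${\rm Ker}\,\nabla^2\vartheta(\overline{x})\cap\mathcal{W}$, a contradiction; hence $\overline{x}$ is tilt-stable.

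\textbf{Necessity.} Conversely, let $\overline{x}$ be tilt-stable and take any $0\ne w\in{\rm Ker}\,\nabla^2\vartheta(\overline{x})\cap\mathcal{W}$. By definition of $\mathcal{W}$ there are $(x^k,y^k,w^k)\to(\overline{x},-\nabla\vartheta(\overline{x}),w)$ with $(x^k,y^k)\in{\rm gph}\,\partial h$ and $d^2h(x^k|y^k)(w^k)\to 0$. Setting $v^k:=\nabla\vartheta(x^k)+y^k\in\nabla\vartheta(x^k)+\partial h(x^k)=\partial\Theta(x^k)$ gives $v^k\to 0$, and \eqref{sderiv-Phi} yields $d^2\Theta(x^k|v^k)(w^k)=\langle\nabla^2\vartheta(x^k)w^k,w^k\rangle+d^2h(x^k|y^k)(w^k)\to\langle\nabla^2\vartheta(\overline{x})w,w\rangle=0$, the limit being $0$ because $\nabla^2\vartheta(\overline{x})w=0$. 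This is exactly a sequence of the excluded type from the first step, contradicting tilt stability; therefore ${\rm Ker}\,\nabla^2\vartheta(\overline{x})\cap\mathcal{W}=\{0\}$, and no positive semidefiniteness of the Hessian is needed for this direction.

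The genuinely delicate point is the first step: extracting from \cite{ChienSiam18,Nghia24} the sequence-based ("uniform second-subderivative") characterization of tilt stability and checking that $\Theta$ meets the prox-regularity and subdifferential-continuity hypotheses those results require. Once this reformulation is in hand, the rest reduces to the sum formula \eqref{sderiv-Phi}, the nonnegativity of $d^2h$ for convex $h$, and the continuity of $\nabla^2\vartheta$.
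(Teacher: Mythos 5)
Your argument is correct and follows essentially the same route the paper indicates for this lemma (which it omits, deferring to \cite[Proposition 2.2]{LiuPan24}): namely, combine the uniform second-subderivative characterization of tilt stability for prox-regular, subdifferentially continuous functions from \cite[Theorem 3.3]{ChienSiam18} and \cite[Lemma 2.4]{Nghia24} with the sum rule \eqref{sderiv-Phi}, using convexity of $h$ and $\nabla^2\vartheta\succeq 0$ near $\overline{x}$ to split the vanishing second subderivative into two nonnegative vanishing terms. The only cosmetic caveat is that your preliminary ``no bad sequence'' reformulation is, in general, only equivalent to the uniform estimate once a lower bound on $d^2\Theta$ is available, but your sufficiency step supplies exactly that bound before it is needed, so the proof stands.
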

 \section{Tilt stability of nonlinear SDPs}\label{sec3}

 Before investigating the tilt stability of problem \eqref{pNSDP}, we provide the first-order varational characterization of its feasible set $\Gamma$ and the expression of the second subderivative of $\delta_{\Gamma}$.    
\begin{proposition}\label{Gamset-prop}
 Consider any $x\in\Gamma$. Suppose that the mapping $\mathcal{G}$ in \eqref{Gmap} is subregular at $(x,(0,0))$, and that the mapping $\mathcal{H}$ in \eqref{Hmap} is subregular at $(x,0)$. Then 
\begin{itemize}
 \item [(i)] $\mathcal{N}_{\Gamma}(x)=\mathcal{N}_{g^{-1}(\mathbb{S}^n_+)}(x)+ \mathcal{N}_{\mathcal{A}^{-1}(b)}(x)=\nabla g(x)\mathcal{N}_{\mathbb{S}^n_+}(g(x))+{\rm Im}\mathcal{A}^{*}$;

 \item [(ii)] $\mathcal{T}_{\Gamma}(x)=\mathcal{T}_{g^{-1}(\mathbb{S}^n_+)}(x)\cap \mathcal{T}_{\mathcal{A}^{-1}(b)}(x)=(g'(x))^{-1}[\mathcal{T}_{\mathbb{S}^n_+}(g(x))]\cap {\rm Ker}\mathcal{A}$;

 \item[(iii)] for any $v\in\mathcal{N}_{\Gamma}(x)$, $\mathcal{C}_{\Gamma}(x,v)=(g'(x))^{-1}[\mathcal{T}_{\mathbb{S}^n_+}(g(x))]\cap {\rm Ker}\mathcal{A}\cap[\![v]\!]^\perp$;  
	
 \item [(iv)] for any $v\in\mathcal{N}_{\Gamma}(x)$, $\Gamma$ is parabolically regular at $x$ for $v$, $d^2\delta_{\Gamma}(x|v)$ is a proper and lsc function with $ {\rm dom}\,d^2\delta_{\Gamma}(x|v)=\mathcal{C}_{\Gamma}(x,v)$, and for any $w\in\mathcal{C}_{\Gamma}(x,v)$, 
 \begin{equation}\label{Gam-ssubderiv}
 d^2\delta_{\Gamma}(x|v)(w)=\!\!\sup_{(y,S)\in\mathbb{R}^m\times\mathcal{N}_{\mathbb{S}_{+}^n}(g(x))\atop
 \mathcal{A}^*y+\nabla g(x)S=v}\!\!\langle S,D^2g(x)(w,w)- 2(g'(x)w)(g(x))^{\dagger}(g'(x)w)\rangle,
 \end{equation} 
 where the supremum problem on the right hand side has a finite optimal value.

 \item[(v)] If the mapping $\mathcal{G}$ is metrically regular at $(x,(0,0))$, then for any $v\in\mathcal{N}_{\Gamma}(x)$ and $w\in\mathcal{C}_{\Gamma}(x,v)$ the supremum problem in \eqref{Gam-ssubderiv} has a nonempty compact solution set. 
 \end{itemize}  
 \end{proposition}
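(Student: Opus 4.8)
The plan is to treat $\Gamma$ as the preimage $\Gamma=\Psi^{-1}(\mathbb{C})$ of the closed convex set $\mathbb{C}:=\{0\}^m\times\mathbb{S}_{+}^n$ under the $\mathcal{C}^2$ mapping $\Psi(x):=(\mathcal{A}x-b,g(x))$, so that $\Psi'(x)=(\mathcal{A},g'(x))$ and $\nabla\Psi(x)(y,S)=\mathcal{A}^*y+\nabla g(x)S$. Under this identification, the metric subregularity of $\mathcal{G}$ at $(x,(0,0))$ is exactly the metric subregularity constraint qualification for $\Gamma=\Psi^{-1}(\mathbb{C})$ at $x$, and that of $\mathcal{H}$ at $(x,0)$ is the analogous qualification for $g^{-1}(\mathbb{S}_{+}^n)$ at $x$; this makes the preimage calculus for normal cones, tangent cones and second subderivatives of \cite{RW98,Moha2020} available.

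For part (i), the normal-cone chain rule under the metric subregularity CQ gives $\mathcal{N}_{\Gamma}(x)=\nabla\Psi(x)\mathcal{N}_{\mathbb{C}}(\Psi(x))$; since $\mathcal{N}_{\mathbb{C}}(\Psi(x))=\mathbb{R}^m\times\mathcal{N}_{\mathbb{S}_{+}^n}(g(x))$, this yields the second equality $\mathcal{N}_{\Gamma}(x)={\rm Im}\,\mathcal{A}^*+\nabla g(x)\mathcal{N}_{\mathbb{S}_{+}^n}(g(x))$. The first equality follows because $\mathcal{N}_{\mathcal{A}^{-1}(b)}(x)={\rm Im}\,\mathcal{A}^*$ ($\mathcal{A}^{-1}(b)$ being affine) while the subregularity of $\mathcal{H}$ and the same chain rule applied to $g^{-1}(\mathbb{S}_{+}^n)$ give $\mathcal{N}_{g^{-1}(\mathbb{S}_{+}^n)}(x)=\nabla g(x)\mathcal{N}_{\mathbb{S}_{+}^n}(g(x))$. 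Part (ii) is obtained analogously from the tangent-cone chain rule under metric subregularity: $\mathcal{T}_{\Gamma}(x)=(\Psi'(x))^{-1}[\mathcal{T}_{\mathbb{C}}(\Psi(x))]=(g'(x))^{-1}[\mathcal{T}_{\mathbb{S}_{+}^n}(g(x))]\cap{\rm Ker}\mathcal{A}$, together with $\mathcal{T}_{\mathcal{A}^{-1}(b)}(x)={\rm Ker}\mathcal{A}$. Part (iii) then follows at once by intersecting (ii) with $[\![v]\!]^\perp$, since $\mathcal{C}_{\Gamma}(x,v)=\mathcal{T}_{\Gamma}(x)\cap[\![v]\!]^\perp$.

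Part (iv) is the heart of the matter. I would first recall that $\mathbb{S}_{+}^n$ is $\mathcal{C}^2$-cone reducible (see \cite{BS00}), hence parabolically regular at $g(x)$ relative to every $S\in\mathcal{N}_{\mathbb{S}_{+}^n}(g(x))$, so that $\mathbb{C}$ is parabolically regular at $\Psi(x)$ relative to every element of $\mathcal{N}_{\mathbb{C}}(\Psi(x))$. The composite second-subderivative chain rule of \cite{Moha2020}, valid under the metric subregularity CQ and parabolic regularity of $\mathbb{C}$, then yields that $\Gamma$ is parabolically regular at $x$ for $v$, that $d^2\delta_{\Gamma}(x|v)$ is proper lsc with ${\rm dom}\,d^2\delta_{\Gamma}(x|v)=\mathcal{C}_{\Gamma}(x,v)$, and that for each $w\in\mathcal{C}_{\Gamma}(x,v)$,
\[
d^2\delta_{\Gamma}(x|v)(w)=\sup_{(y,S)\in\Lambda(x,v)}\Big\{\langle S,D^2g(x)(w,w)\rangle+d^2\delta_{\mathbb{S}_{+}^n}(g(x)|S)(g'(x)w)\Big\},
\]
where $\Lambda(x,v):=\{(y,S)\in\mathbb{R}^m\times\mathcal{N}_{\mathbb{S}_{+}^n}(g(x))\ |\ \mathcal{A}^*y+\nabla g(x)S=v\}$; here I used $D^2\Psi(x)(w,w)=(0,D^2g(x)(w,w))$ by linearity of $\mathcal{A}$, and that the $\{0\}^m$-component contributes $\delta_{\{0\}^m}(\mathcal{A}w)=0$ for $w\in{\rm Ker}\mathcal{A}$ (cf. Lemma \ref{d2Ab-lemma}). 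It then remains to screen the multiplier set: for any $(y,S)\in\Lambda(x,v)$ and $w\in\mathcal{C}_{\Gamma}(x,v)$, part (ii) gives $g'(x)w\in\mathcal{T}_{\mathbb{S}_{+}^n}(g(x))$, while combining $\langle v,w\rangle=0$, $\mathcal{A}w=0$ and $v=\mathcal{A}^*y+\nabla g(x)S$ forces $\langle S,g'(x)w\rangle=0$, so $g'(x)w\in\mathcal{C}_{\mathbb{S}_{+}^n}(g(x),S)$; hence $d^2\delta_{\mathbb{S}_{+}^n}(g(x)|S)(g'(x)w)=-\sigma_{\mathcal{T}^2_{\mathbb{S}_{+}^n}(g(x),g'(x)w)}(S)=-2\langle S,(g'(x)w)(g(x))^{\dagger}(g'(x)w)\rangle$ by Lemma \ref{sigma-Lemma}. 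Substituting gives \eqref{Gam-ssubderiv}, and the finiteness of the supremum on $\mathcal{C}_{\Gamma}(x,v)$ is precisely the properness asserted by the chain rule.

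For part (v): when $\mathcal{G}$ is metrically regular at $(x,(0,0))$, Proposition \ref{Mmap-prop} gives that $\mathcal{M}$ is locally bounded at $(x,v)$, so the feasible set $\Lambda(x,v)=\mathcal{M}(x,v)$ of the supremum problem in \eqref{Gam-ssubderiv} is bounded; it is closed because $\mathcal{M}$ is closed, and nonempty by part (i) since $v\in\mathcal{N}_{\Gamma}(x)$. Thus $\Lambda(x,v)$ is compact, and the objective, being linear hence continuous in $(y,S)$, attains its maximum on it; the argmax is a closed subset of a compact set, hence compact. The step I expect to be the main obstacle is part (iv): confirming that the metric subregularity CQ suffices to invoke the full second-subderivative chain rule of \cite{Moha2020} in the presence of the non-polyhedral cone $\mathbb{S}_{+}^n$ (where $\mathcal{C}^2$-cone reducibility and parabolic regularity are the crucial inputs), and correctly collapsing the supremum's multiplier set so that the implicit constraint $g'(x)w\in\mathcal{C}_{\mathbb{S}_{+}^n}(g(x),S)$ disappears on $\mathcal{C}_{\Gamma}(x,v)$, leaving the clean formula \eqref{Gam-ssubderiv}.
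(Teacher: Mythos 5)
Your proposal is correct and, for parts (i)--(iv), follows essentially the same route as the paper: chain rules for normal and tangent cones under metric subregularity, parabolic regularity of $\mathbb{S}_{+}^n$, the composite second-subderivative formula of \cite[Theorem 5.6]{Moha2020} applied to $f(x')=(\mathcal{A}x'-b;g(x'))$ and $\Theta=\{0\}^m\times\mathbb{S}_{+}^n$ (your $\Psi$ and $\mathbb{C}$), and then Lemma \ref{sigma-Lemma} to evaluate $-\sigma_{\mathcal{T}^2_{\mathbb{S}^n_+}(g(x),g'(x)w)}(S)$. The only organizational difference in (i)--(ii) is that you apply the preimage calculus to the joint map $\Psi$ and read off the intersection form afterwards, whereas the paper first uses the intersection rule from \cite{Ioffe08} under subregularity of $\mathcal{G}$ and then the preimage rule for $g^{-1}(\mathbb{S}_{+}^n)$ under subregularity of $\mathcal{H}$; both are legitimate and your explicit verification that $g'(x)w\in\mathcal{C}_{\mathbb{S}_{+}^n}(g(x),S)$ for every multiplier (which the paper leaves implicit when invoking Lemma \ref{sigma-Lemma}) is a welcome addition. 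Where you genuinely diverge is part (v): the paper rewrites the supremum in \eqref{Gam-ssubderiv} as the linear conic program \eqref{pconic}, forms its dual \eqref{dconic}, and deduces nonemptiness and compactness of the solution set from \cite[Theorem 2.165]{BS00} together with the metric regularity of $\widetilde{\mathcal{G}}_{x}$, whereas you observe that under metric regularity Proposition \ref{Mmap-prop} already makes the feasible set $\mathcal{M}(x,v)$ bounded, closed and nonempty, so the linear objective attains its maximum on a compact feasible set and the argmax is automatically nonempty and compact. Your argument is more elementary and buys a shorter proof by exploiting a fact the paper has already established; the paper's duality route yields the same conclusion but packages additional information (zero duality gap with \eqref{dconic}) that is not needed for the stated assertion. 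One small caveat: the claim in your part (i) that the chain-rule inclusion is an equality ultimately rests on $\widehat{\mathcal{N}}_{\mathbb{C}}=\mathcal{N}_{\mathbb{C}}$ by convexity of $\mathbb{C}$ (so that the always-valid reverse inclusion for regular normal cones closes the sandwich); you should make that one line explicit, since $\Gamma$ itself is not assumed convex at this stage of the paper.
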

 \begin{proof}
 {\bf(i)} As the mapping $\mathcal{G}$ is subregular at $(x,(0,0))\!\in\!{\rm gph}\,\mathcal{G}$, from \cite[Section 3.1]{Ioffe08}, 
 \begin{equation}\label{temp-inclusion}
 \widehat{\mathcal{N}}_{g^{-1}(\mathbb{S}^n_+)}(x)+\mathcal{N}_{\mathcal{A}^{-1}(b)}(x)\subset\widehat{\mathcal{N}}_{\Gamma}(x)\subset\mathcal{N}_{\Gamma}(x)\subset\mathcal{N}_{g^{-1}(\mathbb{S}^n_+)}(x)+\mathcal{N}_{\mathcal{A}^{-1}(b)}(x),
 \end{equation}
 where the first inclusion is by the definition of regular normal cones. While using the subregularity of the mapping $\mathcal{H}$ at $(x,0)$ and \cite[Section 3.1]{Ioffe08}, we have
 \[
  \nabla g(x)\widehat{\mathcal{N}}_{\mathbb{S}_{+}^n}(g(x))\subset\widehat{\mathcal{N}}_{g^{-1}(\mathbb{S}^n_+)}(x)\subset\mathcal{N}_{g^{-1}(\mathbb{S}^n_+)}(x)\subset \nabla g(x)\mathcal{N}_{\mathbb{S}_{+}^n}(g(x)). 
 \]
 This, by the convexity of $\mathbb{S}_{+}^n$, means that $\widehat{\mathcal{N}}_{g^{-1}(\mathbb{S}^n_+)}(x)=\mathcal{N}_{g^{-1}(\mathbb{S}^n_+)}(x)=\nabla g(x)\mathcal{N}_{\mathbb{S}_{+}^n}(g(x))$. 
 Together with the above \eqref{temp-inclusion} and $\mathcal{N}_{\!\mathcal{A}^{-1}(b)}(X)={\rm Im}\,\mathcal{A}^{*}$, we get the desired equalities.

 \noindent
 {\bf(ii)-(iii)} The first equality in part (ii) follows the definition of tangent cones and the convexity of $\mathcal{A}^{-1}(b)$. Recall that the mapping $\mathcal{H}$ is subregular at $(x,0)$. Invoking \cite[Proposition 1]{Herion05} leads to $\mathcal{T}_{g^{-1}(\mathbb{S}^n_+)}(x)=(g'(x))^{-1}[\mathcal{T}_{\mathbb{S}^n_+}(g(x))]$. The second equality in part (ii) follows. Part (iii) is immediate by the definition of critical cones and part (ii).

 \medskip
 \noindent
 {\bf(iv)} Fix any $v\!\in\mathcal{N}_{\Gamma}(x)$. The polyhedrality of the set $\mathcal{A}^{-1}(b)$ implies that it is parabolically derivable and parabolically regular. From \cite[Example 3.140]{BS00} and \cite[Theorem 6.2]{Moha2020}, the set $\mathbb{S}_{+}^n$ is parabolically derivable at any $Z\in\mathbb{S}_{+}^n$ for any $W\in\mathcal{T}_{\mathbb{S}_{+}^n}(Z)$, and parabolically regular at any $Z\in\mathbb{S}_{+}^n$ for any $S\in\mathcal{N}_{\mathbb{S}_{+}^n}(Z)$. By \cite[Theorem 5.6]{Moha2020}, $\Gamma$ is parabolically regular at $x$ for $v$, and by \cite[Proposition 5.3 (ii)]{Moha2020} $d^2\delta_{\Gamma}(x|v)$ is a proper lsc function with ${\rm dom}\,d^2\delta_{\Gamma}(x|v)=\mathcal{C}_{\Gamma}(x,v)$. Moreover, by invoking \cite[Theorem 5.6]{Moha2020} with $f(x')=(\mathcal{A}x'\!-b;g(x'))$ for $x'\in\mathbb{X}$ and $\Theta=\{0\}^m\times\mathbb{S}^n_+$, for any $w\in\mathcal{C}_{\Gamma}(x|v)$, it holds  
 \begin{align*}
  d^2\delta_{\Gamma}(x|v)(w)
 &=\sup_{(y,S)\in\mathcal{M}(x,v)}\Big\{\langle S,D^2g(x)(w,w)\rangle+d^2\delta_{\mathbb{S}^n_+}(g(x)|S)(g'(x)w)\Big\}\\  
 &=\sup\limits_{(y,S)\in\mathcal{M}(x,v)}\Big\{\langle S,D^2g(x)(w,w)\rangle-\sigma_{\mathcal{T}_{\mathbb{S}^n_+}^2(g(x),g'(x)w)}(S)\Big\}\\
&=\sup\limits_{(y,S)\in\mathcal{M}(x,v)}\Big\{\langle S,D^2g(x)(w,w)\rangle-2\langle S,(g'(x)w)(g(x))^{\dagger}(g'(x)w)\rangle\Big\}
 \end{align*} 
 where the second equality is due to \cite[Theorem 3.3 (ii)]{Moha2020},  and the third is by Lemma \ref{sigma-Lemma}. Combining the above equality with the expression of $\mathcal{M}(x,v)$ gives equality \eqref{Gam-ssubderiv}. Since $d^2\delta_{\Gamma}(x|v)$ is proper and $w\in{\rm dom}\,d^2\delta_{\Gamma}(x|v)$, the supremum problem on the right hand side of \eqref{Gam-ssubderiv} has a finite optimal value. 
 
 \noindent
 {\bf(v)} Fix any $v\in\mathcal{N}_{\Gamma}(x)$ and $w\in{\rm dom}\,d^2\delta_{\Gamma}(x|v)$. The supremum problem in \eqref{Gam-ssubderiv} is the linear conic program with $\widetilde{C}=2(g'(x)w)(g(x))^{\dagger}(g'(x)w)-\!D^2g(x)(w,w)$: 
 \begin{equation}\label{pconic}
  \sup_{(y,S)\in\mathbb{R}^m\times\mathbb{S}^n}\big\{\langle S,-\widetilde{C}\rangle\ \ {\rm s.t.}\ \ \mathcal{A}^*y+\nabla g(x)S=v,\,S\in\mathcal{N}_{\mathbb{S}_{+}^n}(g(x))\big\}.
 \end{equation}
 After an elementary calculation, the dual of \eqref{pconic} takes the following form
 \begin{equation}\label{dconic}
 \inf_{\zeta\in\mathbb{X}}\big\{\langle \zeta,-v\rangle\ \ {\rm s.t.}\ \  \mathcal{A}\zeta=0,\,g'(x)\zeta-\widetilde{C}\in\mathcal{T}_{\mathbb{S}_{+}^n}(g(x))\big\}.
 \end{equation}
 Let $\widetilde{\mathcal{G}}_{x}$ be the multifunction defined as in \eqref{Gmap} with such $\widetilde{C}$ and $\widetilde{b}=0$.   
 By Remark \ref{remark2-regular} (a), $\widetilde{\mathcal{G}}_{x}$ is metrically regular at any $(z,(0,0))\in{\rm gph}\,\widetilde{\mathcal{G}}_{x}$. Note that the mapping $\widetilde{\mathcal{G}}_{x}$ is convex, i.e., has a convex graph. From \cite[Proposition 2.104]{BS00}, we conclude that the metric regularity of $\widetilde{\mathcal{G}}_{x}$ at any $(z,(0,0))\in{\rm gph}\,\widetilde{\mathcal{G}}_{x}$ is equivalent to \cite[condition (2.312)]{BS00} for $K\!=\{0\}^m\times\mathcal{T}_{\mathbb{S}_{+}^n}(g(x))$ and ${\rm dom}\,f=\mathbb{X}$. Recall that \eqref{pconic} has a finite optimal value. The conclusion follows \cite[Theorem 2.165]{BS00} with (P) taking \eqref{dconic}. 
 \end{proof} 
 
 In the rest of this paper, we assume that the mapping $g\!:\mathbb{X}\to\mathbb{S}^n$ is $\mathbb{S}_{-}^n$-convex, which by \cite[Section 2.3.5]{BS00} is equivalent to the convexity of the multifunction $\mathcal{H}$ defined in \eqref{Hmap} or  the convexity of the function $\mathbb{X}\ni x\mapsto\langle S,g(\cdot)\rangle$ for any $S\in\mathbb{S}_{-}^n$. In this setting, the feasible set $\Gamma$ of problem \eqref{pNSDP} is convex. In addition, for any pair $(X,S)\in{\rm gph}\mathcal{N}_{\mathbb{S}_{+}^n}$, we denote the distinct eigenvalues of $X$ and $S$ by $\mu_1>\mu_2>\cdots>\mu_{p}=0$ and $0=\nu_1>\nu_2>\cdots>\nu_s$, respectively, and introduce the following index sets
 \begin{subequations}
 \begin{equation}\label{alp-gamj}
  \alpha_j\!:=\{i\in[n]\,|\,\lambda_i(X)=\mu_j\}\ {\rm for}\ j\in[p],\ \gamma_j\!:=\{i\in[n]\,|\,\lambda_i(S)=\nu_j\}\ {\rm for}\ j\in[s];
 \end{equation}
 \begin{equation}\label{alp-beta}
 \!\alpha\!:=\!\big\{i\in[n]\,|\,\lambda_i(X)>\!0\big\},\beta\!:=\!\big\{i\in[n]\,|\,\lambda_i(X)\!=0=\!\lambda_i(S)\big\},\gamma\!:=\!\big\{i\in[n]\,|\,\lambda_i(S)\!<\!0\big\}.
 \end{equation}
 \end{subequations}
 Obviously, $\alpha=\bigcup_{j=1}^{p-1}\alpha_j,\gamma=\bigcup_{j=2}^{s}\gamma_j$ and $\beta=\alpha_p\cap\gamma_1=\alpha_p\backslash\gamma=\gamma_1\backslash\alpha$. 
 
 \subsection{Sufficient characterizations of tilt stability}\label{sec3.1}
 
 Now we provide a point-based sufficient characterization for tilt stability of local optimal solutions to problem \eqref{pNSDP}. 
\begin{theorem}\label{Scond1-ptilt}
 Consider a local optimal solution $x^*$ of problem \eqref{pNSDP} with $\nabla^2\varphi(x)\succeq 0$ on an open neighborhood $\mathcal{N}^*\!\subset\mathcal{O}$ of $x^*$. Let $v^*\!:=-\nabla\varphi(x^*)$ and $X^*\!:=\!g(x^*)$. Suppose that $\mathcal{G}$ is metrically regular at $(x^*,(0,0))$. Pick any $(y^*,S^*)\!\in\mathop{\arg\min}_{(y,S)\in\mathcal{M}(x^*,v^*)}{\rm rank}(S)$. Let $\alpha^*,\gamma^*$ and $\beta^*$ be the index sets defined by \eqref{alp-gamj}-\eqref{alp-beta} with $(X,S)=(X^*,S^*)$. Then $x^*$ is a tilt-stable solution of \eqref{pNSDP} whenever ${\rm Ker}\nabla^2\varphi(x^*)\cap{\rm Ker}\mathcal{A}\cap[\![v^*]\!]^{\perp}\cap\Upsilon=\{0\}$ with
 \begin{align*}
 \Upsilon&\!:=\!\bigg\{w\!\in\mathbb{X}\ |\ \exists\, \beta_{-}^*,\beta_{+}^*,\beta_0^*,\widetilde{\gamma}^*\subset\![n],P^*\!\in\mathbb{O}(X^*)\cap\mathbb{O}(S^*)\ {\rm and}\ Q=\!{\rm BlkDiag}(Q_1,\ldots,Q_{p})\\ 
 &\qquad\qquad\qquad  {\rm for}\ Q_j\!\in\mathbb{O}^{|\alpha_j|}\ {\rm s.t.}\  \beta^*\!=\!\beta_{-}^*\cup\beta_{+}^*\cup\beta_0^*\cup[\widetilde{\gamma}^*\backslash\gamma^*]\ {\rm and}\ \widetilde{W}_{[\alpha^*\cup\beta_{+}^*]\widetilde{\gamma}^*}=0,\\ 
 &\qquad\qquad\qquad \widetilde{W}_{\beta_0^*\beta_0^*}\succeq 0,\,\widetilde{W}_{[\beta_{0}^*\cup\beta_{-}^*\cup\widetilde{\gamma}^*][\beta_{-}^*\cup\widetilde{\gamma}^*]}\!=0\ {\rm with}\ \widetilde{W}\!:=(P^*Q)^\top(g'(x^*)w)P^*Q\bigg\}.
 \end{align*}
 \end{theorem}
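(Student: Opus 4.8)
The plan is to reduce the statement to Lemma \ref{Tiltlemma} applied with the convex function $h=\delta_{\Gamma}$ (note $\Gamma$ is convex because $g$ is $\mathbb{S}^n_-$-convex) and with $\varphi$ in the role of the smooth function $\vartheta$ (harmlessly replacing $\varphi$ by a real-valued $\mathcal{C}^2$ representative near $\Gamma$, since tilt stability is local), which is legitimate as $\varphi$ is $\mathcal{C}^2$ on $\mathcal{O}\supset\Gamma={\rm dom}\,\delta_{\Gamma}$ and $\nabla^2\varphi\succeq 0$ on $\mathcal{N}^*$. Since $x^*$ is a local minimizer of \eqref{pNSDP}, the stationarity $0\in\nabla\varphi(x^*)+\mathcal{N}_{\Gamma}(x^*)$ (valid since $\varphi$ is smooth and $\Gamma$ convex) gives $v^*=-\nabla\varphi(x^*)\in\mathcal{N}_{\Gamma}(x^*)$; then by Proposition \ref{Mmap-prop} the metric regularity of $\mathcal{G}$ at $(x^*,(0,0))$ makes $\mathcal{M}$ closed and locally bounded near $(x^*,v^*)$, so the selection $(y^*,S^*)\in\mathop{\arg\min}_{(y,S)\in\mathcal{M}(x^*,v^*)}{\rm rank}(S)$ is indeed attained. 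By Lemma \ref{Tiltlemma} it then suffices to prove the inclusion $\mathcal{W}\subseteq{\rm Ker}\mathcal{A}\cap[\![v^*]\!]^{\perp}\cap\Upsilon$; combined with the hypothesis this yields ${\rm Ker}\,\nabla^2\varphi(x^*)\cap\mathcal{W}=\{0\}$ and hence the tilt stability of $x^*$.

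To establish this inclusion I would fix $w\in\mathcal{W}$ together with sequences $(x^k,y^k)\in{\rm gph}\,\mathcal{N}_{\Gamma}$, $w^k\to w$, $(x^k,y^k)\to(x^*,v^*)$ and $d^2\delta_{\Gamma}(x^k|y^k)(w^k)\to 0$. Since metric regularity of $\mathcal{G}$ is robust (Remark \ref{remark1-regular}(a)), $\mathcal{G}$ is metrically regular at $(x^k,(0,0))$ for all large $k$, whence $\mathcal{H}$ is subregular at $(x^k,0)$ by Remark \ref{remark2-regular}(b), so Proposition \ref{Gamset-prop} applies at each such $x^k$. As $d^2\delta_{\Gamma}(x^k|y^k)(w^k)$ is eventually finite, $w^k\in{\rm dom}\,d^2\delta_{\Gamma}(x^k|y^k)=\mathcal{C}_{\Gamma}(x^k,y^k)\subseteq{\rm Ker}\mathcal{A}\cap[\![y^k]\!]^{\perp}$ by Proposition \ref{Gamset-prop}(iii)--(iv); passing to the limit gives $w\in{\rm Ker}\mathcal{A}\cap[\![v^*]\!]^{\perp}$. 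Moreover, convexity of $\Gamma$ and $y^k\in\mathcal{N}_{\Gamma}(x^k)$ force $\Delta^2_{\tau}\delta_{\Gamma}(x^k|y^k)(w')\ge 0$ for all small $\tau>0$ and all $w'$, so $d^2\delta_{\Gamma}(x^k|y^k)(w^k)\ge 0$ and the convergence to $0$ is one-sided.

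The core is to show $w\in\Upsilon$. By Proposition \ref{Gamset-prop}(v) the supremum in \eqref{Gam-ssubderiv} at $(x^k,y^k,w^k)$ is attained at some $(\widehat y^k,\widehat S^k)\in\mathcal{M}(x^k,y^k)$, so $d^2\delta_{\Gamma}(x^k|y^k)(w^k)=\big\langle\widehat S^k,\,D^2g(x^k)(w^k,w^k)-2(g'(x^k)w^k)(g(x^k))^{\dagger}(g'(x^k)w^k)\big\rangle$. Local boundedness of $\mathcal{M}$ near $(x^*,v^*)$ makes $\{(\widehat y^k,\widehat S^k)\}$ bounded; along a subsequence $(\widehat y^k,\widehat S^k)\to(\widehat y,\widehat S)$, and closedness of $\mathcal{M}$ gives $(\widehat y,\widehat S)\in\mathcal{M}(x^*,v^*)$. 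Put $X^k:=g(x^k)\to X^*$ and $W^k:=g'(x^k)w^k\to W^*:=g'(x^*)w$. Since $X^k\succeq 0\succeq\widehat S^k$ and $X^k\widehat S^k=0$, pick a common eigenbasis $P^k\in\mathbb{O}(X^k)\cap\mathbb{O}(\widehat S^k)$ and, passing to a further subsequence, assume $P^k\to\bar P\in\mathbb{O}(X^*)\cap\mathbb{O}(\widehat S)$ and that the sign patterns of $\lambda(X^k)$ and $\lambda(\widehat S^k)$ are constant. From $\mathcal{A}w^k=0$, $\langle y^k,w^k\rangle=0$ and $\mathcal{A}^*\widehat y^k+\nabla g(x^k)\widehat S^k=y^k$ one gets $\langle\widehat S^k,W^k\rangle=0$, so $W^k\in\mathcal{C}_{\mathbb{S}^n_+}(X^k,\widehat S^k)$, and Lemma \ref{sigma-Lemma} yields, with $\widetilde W^k:=(P^k)^{\top}W^kP^k$,
\[
\langle\widehat S^k,W^k(X^k)^{\dagger}W^k\rangle=\sum_{\lambda_i(X^k)>0,\ \lambda_j(\widehat S^k)<0}\frac{\lambda_j(\widehat S^k)}{\lambda_i(X^k)}(\widetilde W^k_{ij})^2\le 0 .
\]
The displayed identity for $d^2\delta_{\Gamma}(x^k|y^k)(w^k)$, the convergence of the $D^2g$-term, and $d^2\delta_{\Gamma}(x^k|y^k)(w^k)\to 0$ show that the left-hand side above stays bounded, so every (nonpositive) summand is bounded below; hence the entries $(i,j)$ with $\lambda_i(X^k)\to 0^+$ and $\lambda_j(\widehat S^k)$ bounded away from $0$ satisfy $\widetilde W_{ij}=0$, where $\widetilde W:=\bar P^{\top}W^*\bar P$. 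The critical-cone description \eqref{SDCritial} at $(X^k,\widehat S^k)$, the membership $W^k\in\mathcal{T}_{\mathbb{S}^n_+}(X^k)$, and the relation $\langle\widehat S^k,W^k\rangle=0$, passed to the limit, pin down the remaining blocks of $\widetilde W$. Writing $\bar P=P^*Q$ for the fixed $P^*\in\mathbb{O}(X^*)\cap\mathbb{O}(S^*)$ and $Q:=(P^*)^{\top}\bar P$, which commutes with ${\rm Diag}(\lambda(X^*))$ and hence equals ${\rm BlkDiag}(Q_1,\ldots,Q_p)$ with $Q_j\in\mathbb{O}^{|\alpha_j|}$, and labelling $\beta^*_+$ as the indices of $\beta^*$ on which $\lambda(X^k)$ becomes positive, $\widetilde\gamma^*$ from the eventual negative-eigenvalue structure of $\widehat S^k$, and $\beta^*_0,\beta^*_-$ from the residual splitting, the assembled limiting constraints are exactly those defining $\Upsilon$; hence $w\in\Upsilon$, which completes the proof.

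The main obstacle is this final limiting argument. The Moore--Penrose pseudo-inverse $(g(x^k))^{\dagger}$ is discontinuous wherever the rank of $g(x^k)$ drops, so one has to split the index set according to whether each eigenvalue of $g(x^k)$ stays zero, decays to zero from above, or stays positive (and similarly track the signs of $\widehat S^k$), use the one-sided boundedness extracted above to annihilate the ``blow-up'' entries of $\widetilde W$, and---the subtlest point---reconcile the eigenvalue/eigenbasis structure of the limiting multiplier $\widehat S$ with that of the prescribed minimum-rank multiplier $S^*$, which is precisely where the minimality of ${\rm rank}(S^*)$ is used to guarantee that the partition of $\beta^*$ appearing in $\Upsilon$ can be realized.
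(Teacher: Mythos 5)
Your overall architecture matches the paper's proof: reduce to Lemma \ref{Tiltlemma}, show $\mathcal{W}\subset{\rm Ker}\,\mathcal{A}\cap[\![v^*]\!]^{\perp}\cap\Upsilon$, use Proposition \ref{Mmap-prop} for closedness/local boundedness of $\mathcal{M}$, Proposition \ref{Gamset-prop}(v) to get maximizers $(\widehat{y}^k,\widehat{S}^k)$, Lemma \ref{sigma-Lemma} for the sigma-term, and the minimum-rank choice of $S^*$ to get $\gamma^*\subset\widetilde{\gamma}^*$ and the partition of $\beta^*$. But there is a genuine gap at the decisive step. To land in $\Upsilon$ you must show that the \emph{entire} block $\widetilde{W}_{[\alpha^*\cup\beta_{+}^*]\widetilde{\gamma}^*}$ vanishes. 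Your argument extracts only that the nonpositive sum $\sum\frac{\lambda_j(\widehat{S}^k)}{\lambda_i(X^k)}(\widetilde{W}^k_{ij})^2$ stays \emph{bounded}, and from boundedness you can only annihilate the entries where the ratio blows up, i.e.\ $i\in\beta_{+}^*$ (where $\lambda_i(X^k)\to 0^+$) against $j\in\widetilde{\gamma}^*$. For $i\in\alpha^*$ the ratio converges to the finite nonzero number $\lambda_j(\widehat{S})/\lambda_i(X^*)<0$, so boundedness of the sum says nothing about $\widetilde{W}_{ij}$. Your fallback --- that the critical-cone description \eqref{SDCritial} at $(X^k,\widehat{S}^k)$ ``pins down the remaining blocks'' --- does not work either: with $\alpha_k=\alpha^*\cup\beta_{+}^*$ and $\gamma_k=\beta_{-}^*\cup\widetilde{\gamma}^*$, the critical cone constrains only the $\beta_k\times\beta_k$, $\beta_k\times\gamma_k$ and $\gamma_k\times\gamma_k$ blocks and leaves the $\alpha_k\times\gamma_k$ block completely free, which is precisely where $\alpha^*\times\widetilde{\gamma}^*$ sits.

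The paper closes this hole by using the standing $\mathbb{S}_{-}^n$-convexity of $g$: since $\widehat{S}\in\mathcal{N}_{\mathbb{S}_{+}^n}(X^*)\subset\mathbb{S}_{-}^n$, the function $x\mapsto\langle \widehat{S},g(x)\rangle$ is convex, so $\langle \widehat{S},D^2g(x^*)(w,w)\rangle\ge 0$. Combining this with $d^2\delta_{\Gamma}(x^k|v^k)(w^k)\to 0$ in the identity from \eqref{Gam-ssubderiv} forces the nonpositive ratio sum to converge to $0$ \emph{exactly}, not merely to stay bounded; since every summand is nonpositive, each individual summand tends to $0$, and for $(i,j)\in\alpha^*\times\widetilde{\gamma}^*$ the nonvanishing limit of the ratio then yields $\widetilde{W}_{ij}=0$. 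Without this convexity-driven squeeze your proof establishes only $\widetilde{W}_{\beta_{+}^*\widetilde{\gamma}^*}=0$, which is strictly weaker than membership in $\Upsilon$, so the inclusion $\mathcal{W}\subset\Upsilon$ is not proved as written.
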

 \begin{proof}
 The local optimality of $x^*$ to problem \eqref{pNSDP} implies $v^*\in\mathcal{N}_{\Gamma}(x^*)$. According to the metric regularity of $\mathcal{G}$ at $(x^*,(0,0))$ and Remark \ref{remark2-regular} (b), using Proposition \ref{Gamset-prop} (i) shows $\mathcal{M}(x^*,v^*)\ne\emptyset$, so $(y^*,S^*)\in\mathcal{M}(x^*,v^*)$ is well defined with $S^*\in \mathcal{N}_{\mathbb{S}^n_+}(X^*)$ and $\mathcal{A}^*y^*\!+\nabla g(x^*)S^*=v^*$. As $S^*\in \mathcal{N}_{\mathbb{S}^n_+}(X^*)$, the matrices $X^*$ and $S^*$ have a simultaneous ordered spectral decomposition, i.e., there exists $P^*\in\mathbb{O}(X^*)\cap\mathbb{O}(S^*)$ such that  
 \begin{equation}\label{EigDecompXS}
   X^*\!=P^*\begin{pmatrix}
      {\rm Diag}(\lambda^{+}(X^*)) & 0\\
     		0 & 0 \\
   \end{pmatrix}(P^*)^{\top}
  \ {\rm and}\  
  S^*\!=P^*\begin{pmatrix}
     0 & 0\\
     0 & {\rm Diag}(\lambda^{-}(S^*))
 \end{pmatrix}(P^*)^{\top}.
 \end{equation}
 By invoking Lemma \ref{Tiltlemma} with $\vartheta=\varphi$ and $h=\delta_{\Gamma}$, when ${\rm Ker}\nabla^2\varphi(x^*)\cap \mathcal{W}=\{0\}$ with  
 \begin{align}\label{MW-set}
 \mathcal{W}&=\Big\{w\in \mathbb{X}\ |\ {\rm there\ is\ a\ sequence}\ \{((x^k,v^k),w^k)\}_{k\in\mathbb{N}}\subset{\rm gph}\,\mathcal{N}_{\Gamma}\times\mathbb{X}\ {\rm with}\nonumber\\
 &\qquad\qquad\quad \lim_{k\to \infty}(x^k,v^k,w^k)= (x^*,v^*,w)\ {\rm s.t.}\ \lim_{k\to \infty} d^2\delta_{\Gamma}(x^k|v^k)(w^k)=0\Big\},
 \end{align}
 the solution $x^*$ is tilt-stable. Thus, to achieve the conclusion, it suffices to argue that $\mathcal{W}\subset{\rm Ker}\,\mathcal{A}\cap[\![v^*]\!]^{\perp}\cap\Upsilon$. Pick any $w\in\mathcal{W}$. We will prove that $w\in{\rm Ker}\,\mathcal{A}\cap[\![v^*]\!]^{\perp}\cap\Upsilon$. 
 
 {\bf Step 1: to seek the desired index sets $\beta_{-}^*,\beta_{+}^*,\beta_0^*,\widetilde{\gamma}^*\subset\![n]$.}  
 From $w\in\mathcal{W}$ and equation \eqref{MW-set}, there exist sequences $\{(x^k,v^k)\}_{k\in\mathbb{N}}\subset{\rm gph}\,\mathcal{N}_{\Gamma}$ and $\{w^k\}_{k\in\mathbb{N}}\subset\mathbb{X}$ such that
 \begin{equation}\label{XGamWk-equa1}
  \lim_{k\to \infty}(x^k,v^k,w^k)=(x^*,v^*,w)\ \ {\rm and}\ \ 
  \lim_{k\to \infty} d^2 \delta_{\Gamma}(x^k|v^k)(w^k)=0. 
 \end{equation} 
 By the second limit in \eqref{XGamWk-equa1}, there is $\overline{k}\in\mathbb{N}$ such that for all $k\ge\overline{k}$, $w^k\in {\rm dom}\,d^2\delta_{\Gamma}(x^k|v^k)$. Recall that $\{(x^k,v^k)\}_{k\in\mathbb{N}}\subset{\rm gph}\,\mathcal{N}_{\Gamma}$. By Remark \ref{remark1-regular}, the metric regularity of $\mathcal{G}$ at $(x^*,(0,0))$ implies that it is metrically regular at each $(x^k,(0,0))$ for $k\ge\overline{k}$ (if necessary by increasing $\overline{k}$). By Remark \ref{remark2-regular} (b) and Proposition \ref{Gamset-prop} (iv), for each $k\ge\overline{k}$, $d^2\delta_{\Gamma}(x^k|v^k)$ is a proper lsc function with ${\rm dom}\,d^2\delta_{\Gamma}(x^k|v^k)=\mathcal{C}_{\Gamma}(x^k,v^k)$, which together with $w^k\in {\rm dom}\,d^2\delta_{\Gamma}(x^k|v^k)$ and Proposition \ref{Gamset-prop} (iii) leads to the inclusion
 \begin{equation}\label{wk-equa}
  w^k\in (g'(x^k))^{-1}[\mathcal{T}_{\mathbb{S}^n_+}(X^k)]\cap {\rm Ker}\mathcal{A}\cap[\![v^k]\!]^\perp\ {\rm with}\ X^k\!:=g(x^k). 
 \end{equation} 
 Furthermore, by Proposition \ref{Gamset-prop} (v), for each $k\ge\overline{k}$, there exists $(y^k,S^k)\in\mathcal{M}(x^k,v^k)$, i.e. $(y^k,S^k)\in\mathbb{R}^m\times\mathcal{N}_{\mathbb{S}_{+}^n}(X^k)$ with $\mathcal{A}^*y^k+\nabla g(x^k)S^k\!=v^k$, such that 
 \begin{equation}\label{TSubTemp1}
  d^2\delta_{\Gamma}(x^k|v^k)(w^k)=\langle S^k, D^2g(x^k)(w^k,w^k)- 2(g'(x^k)w^k)(X^k)^{\dagger}(g'(x^k)w^k)\rangle.
 \end{equation}
 Since $\mathcal{G}$ is metrically regular at $(x^*,(0,0))$ and $v^*\in\mathcal{N}_{\Gamma}(x^*)$, by Proposition \ref{Mmap-prop}, the mapping $\mathcal{M}$ is locally bounded at $(x^*,v^*)$. Recall that $(x^k,v^k)\to(x^*,v^*)$ as $k\to\infty$. There exists $\widehat{c}_0>0$ such that $\mathcal{M}(x^k,v^k)\subset\big\{(y,S)\in\mathbb{R}^m\times\mathbb{S}^n\,|\ \sqrt{\|y\|^2+\|S\|_F^2}\le \widehat{c}_0\big\}$ for all $k\ge\overline{k}$ (if necessary by increasing $\overline{k}$). From the boundedness of $\{(y^k,S^k)\}_{k\in\mathbb{N}}$ is bounded, there exists an index set $\mathcal{K}\subset\mathbb{N}$ such that $\{(y^k,S^k)\}_{k\in\mathcal{K}}$ is convergent. Denote its limit by $(y,S)$. From the closedness of $\mathcal{M}$, we have $(y,S)\in\mathcal{M}(x^*,v^*)$. Then, the definitions of $(y^*,S^*)$ and the index set $\gamma^*$ imply that 
 $\gamma^*\subset\widetilde{\gamma}^*:=\{i\in[n]\,|\,\lambda_i(S)<0\}$.  
 
 For each $k\ge\overline{k}$, by recalling that $S^k\in\mathcal{N}_{\mathbb{S}_{+}^n}(X^k)$, the matrices $X^k$ and $S^k$ have a simultaneous ordered eigenvalue decomposition, i.e., there exists $P^k\in\mathbb{O}^n$ such that 
 \begin{align}\label{EigDecompXk}
  X^k=P^k\begin{pmatrix}
  {\rm Diag}(\lambda^{+}(X^k)) & 0\\
    0 & 0\\
  \end{pmatrix}(P^k)^\top\ \ {\rm and}\ \ 
S^k=P^k\begin{pmatrix}
    0 & 0\\
   0 & {\rm Diag}(\lambda^{-}(S^k))
  \end{pmatrix}(P^k)^\top,
\end{align}
and we write $\alpha_k\!:=\!\{i\in [n]\,|\, \lambda_i(X^k)>0\},\,\beta_k\!:=\!\{i\in [n]\,|\, \lambda_i(X^k)\!=\lambda_i(S^k)=0\}$ and 
$\gamma_k\!:=\{i\in [n]\,|\, \lambda_i(S^k)\!<0\}$. By considering that $\lambda(X^k)\to \lambda(X^*)$ and $\lambda(S^k)\to\lambda(S)$ as $\mathcal{K}\ni k\to\infty$, the following two index sets are well defined:
\begin{subequations}
\begin{equation}\label{beta+}
\!\beta_{+}^*:=\big\{j\in [n]\,|\, \lim_{\mathcal{K}\ni k\to \infty}\lambda_j(X^k)=0\ {\rm with}\ \lambda_j(X^k)>0\ {\rm for}\ k\in\mathcal{K}\ {\rm large\ enough}\big\},
 \end{equation}
 \begin{equation}\label{beta-}
 \beta_{-}^*:=\{j\in [n]\,|\, \lim_{\mathcal{K}\ni  k\to \infty}\lambda_j(S^k)=0\ {\rm with}\ \lambda_j(S^k)<0\ {\rm for}\ k\in\mathcal{K}\ {\rm large\ enough}\}. 
 \end{equation}
\end{subequations}
Notice that $S\in\mathcal{N}_{\mathbb{S}_{+}^n}(X^*)$ with ${\rm rank}(S)\ge{\rm rank}(S^*)$. By the definition of $\beta^*$, we have 
\begin{equation*}
 \beta_{+}^*\cup\beta_{-}^*\subset\beta_{S}^*:=\{i\in[n]\,|\,\lambda_i(X^*)=0=\lambda_i(S)\}\subset\beta^*.
\end{equation*}
 Let 
$\beta_0^*\!:=\beta_{S}^*\backslash(\beta_{+}^*\cup\beta_{-}^*)$. Then,  $\beta^*=\beta_{S}^*\cup[\widetilde{\gamma}^*\backslash\gamma^*]=\beta_{+}^*\cup\beta_{-}^*\cup\beta_0^*\cup[\widetilde{\gamma}^*\backslash\gamma^*]$.
Moreover, it is easy to deduce that for each $\mathcal{K}\ni k\ge\overline{k}$ (if necessary by increasing $\overline{k}$), 
\begin{equation}\label{indexk} \alpha_k\!=\alpha^*\cup\beta_{+}^*,\,\beta_k=\beta_{S}^*\backslash(\beta_{+}^*\cup\beta_{-}^*)=\beta_0^*\ \ {\rm and}\ \ \gamma_k=\beta_{-}^*\cup\widetilde{\gamma}^*.
\end{equation} 

{\bf Step 2: to seek the desired block orthogonal matrix $Q$.} From \eqref{wk-equa}, we have $\mathcal{A}w^k=0$ and $\langle v^k,w^k\rangle=0$ for each $k\ge\overline{k}$, which by $\lim\limits_{k\to\infty}w^k\!=\!w$  and $\lim\limits_{k\to\infty}v^k\!=\!v^*$ implies
\begin{equation}\label{w-relation}
 \mathcal{A} w=0\ \ {\rm and}\ \ \langle v^*,w\rangle=0. 
\end{equation}
For each $k\!\ge\!\overline{k}$, recalling that $\langle v^k,w^k\rangle=0$, $(y^k,S^k)\in\mathcal{M}(x^k,v^k)$ and $\mathcal{A}w^k=0$, we have
\[
  0=\langle w^k,v^k\rangle=\langle w^k,\mathcal{A}^*y^k\!+\!\nabla g(x^k)S^k\rangle=\langle g'(x^k)w^k, S^k\rangle.
\] 
Along with $g'(x^k)w^k\in\!\mathcal{T}_{\mathbb{S}_{+}^n}(X^k)$ by \eqref{wk-equa}, we get $g'(x^k)w^k\!\in\mathcal{C}_{\mathbb{S}_{+}^n}(X^k,S^k)$ for each $k\ge\overline{k}$. Now, for each $\mathcal{K}\ni k\ge\overline{k}$, by \eqref{EigDecompXk} and \eqref{indexk}, invoking \eqref{SDCritial} with $(X,S)=(X^k,S^k)$ leads to 
\begin{align}\label{WkStruc}
 \widetilde{W}^k:=(P^k)^\top (g'(x^k)w^k) P^k
 =\begin{pmatrix}
\widetilde{W}^k_{11} & \widetilde{W}^k_{12}& \widetilde{W}^k_{13} & \widetilde{W}^k_{14}\\
 (\widetilde{W}^k_{12})^\top & \widetilde{W}^k_{22} & 0 & 0\\
 (\widetilde{W}^k_{13})^{\top} &0 & 0 & 0\\
 (\widetilde{W}^k_{14})^{\top} &0 & 0 & 0
 \end{pmatrix},
\end{align}
where $\widetilde{W}^k_{11}\in \mathbb{S}^{|\alpha^*\cup\beta_{+}^*|},\widetilde{W}^k_{12}\in \mathbb{R}^{|\alpha^*\cup\beta_{+}^*|\times |\beta_0^*|},\widetilde{W}^k_{13}\in \mathbb{R}^{|\alpha^*\cup\beta_{+}^*|\times |\beta^*_-|},\widetilde{W}^k_{14}\in \mathbb{R}^{|\alpha^*\cup\beta_{+}^*|\times |\widetilde{\gamma}^*|}$ and $\widetilde{W}^k_{22}\in\mathbb{S}_{+}^{|\beta_0^*|}$.
Combining \eqref{WkStruc} with the above \eqref{TSubTemp1}-\eqref{EigDecompXk} yields that for each $\mathcal{K}\ni k\ge\overline{k}$,
\begin{align*}
 d^2 \delta_{\Gamma}(x^k|v^k)(w^k)
 &=\langle S^k, D^2g(x^k)(w^k,w^k)\rangle-2\langle \Lambda^{-}(S^k), (\widetilde{W}^k_{13})^\top [\Lambda^{+}(X^k)]^{\dagger}\widetilde{W}^k_{13}\rangle\\
 &=\langle S^k, D^2g(x^k)(w^k,w^k)\rangle-2\sum_{j\in \widetilde{\gamma}^*\cup\beta_{-}^*}\sum_{i\in\alpha^*\cup \beta_{+}^*}\frac{\lambda_{j}(S^k)}{\lambda_i(X^k)}(\widetilde{W}_{ji}^k)^2, 
\end{align*}
where the second equality is obtained by using $\alpha_k=\alpha^*\cup \beta_{+}^*$ and $\gamma_k=\widetilde{\gamma}^*\cup\beta_{-}^*$. Recall that $\mathbb{S}_{-}^n$-convexity of $g$ implies the convexity of the function $\mathbb{X}\ni x\mapsto\langle S,g(\cdot)\rangle$. Passing the limit $\mathcal{K}\ni k\to\infty$
to the above equality and using the second limit in \eqref{XGamWk-equa1} leads to 
\[
  0\ge \lim_{\mathcal{K}\ni k\to\infty} \sum_{j\in \widetilde{\gamma}^*\cup\beta_{-}^*}\sum_{i\in\alpha^*\cup \beta_{+}^*}\frac{\lambda_{j}(S^k)}{\lambda_i(X^k)}(\widetilde{W}_{ji}^k)^2=\frac{1}{2}\langle S, D^2g(x^*)(w,w)\rangle\ge 0,
\]
where the first inequality is due to $\lambda_j(S^k)<0$ for $j\in\!\widetilde{\gamma}^*\cup\beta_{-}^*$ and $\lambda_i(X^k)>0$ for $i\in\alpha^*\cup\beta_{+}^*$. The above equation implies that   $\lim_{\mathcal{K}\ni k\to\infty}\frac{\lambda_{j}(S^k)}{\lambda_i(X^k)}(\widetilde{W}_{\!ji}^k)^2=0$ for each $(i,j)\in\!(\alpha^*\cup \beta_{+}^*)\times(\widetilde{\gamma}^*\cup\beta_{-}^*)$. By the definitions of $\alpha^*,\beta_{+}^*$ and $\widetilde{\gamma}^*$, we deduce that    
$\lim_{\mathcal{K}\ni k\to \infty}\widetilde{W}_{\!ji}^k=0$ for each $(i,j)\in(\alpha^*\cup \beta_{+}^*)\times\widetilde{\gamma}^*$. Note that $\{P^k\}_{k\in\mathcal{K}}$ is bounded. There exists an infinite index set $\mathcal{K}_1\subset\mathcal{K}$ such that $\{P^k\}_{k\in\mathcal{K}_1}$ is convergent with limit denoted by $\overline{P}$. Moreover, from the eigenvalue decomposition of $X^k$ in \eqref{EigDecompXk} and $\lim_{k\to\infty}X^k=X^*$, we have $\overline{P}\in\mathbb{O}(X^*)$. Recall that $\lim_{k\to\infty}x^k=x^*,\,\lim_{k\to\infty}w^k=w$ and $g$ is continuously differentiable. Passing the limit $\mathcal{K}_1\ni k\to\infty$ to equality \eqref{WkStruc} results in 
\begin{equation}\label{Wt-equa}
 \widetilde{W}:=\overline{P}^{\top}(g'(x^*)w)\overline{P}
  =\begin{pmatrix}
    \widetilde{W}_{11} & \widetilde{W}_{12} & \widetilde{W}_{13} & 0_{[\alpha^*\cup\beta_{+}^*]\widetilde{\gamma}^*}\\
    (\widetilde{W}_{12})^{\top}&\widetilde{W}_{22}& 0 &0\\
    (\widetilde{W}_{13})^{\top}&0&0&0\\
    0_{\widetilde{\gamma}^*[\alpha^*\cup\beta_{+}^*]} & 0 &0 &0
  \end{pmatrix}
\end{equation}
with $\widetilde{W}_{11}\!\in \mathbb{S}^{|\alpha^*\cup\beta_{+}^*|},\widetilde{W}_{12}\in \mathbb{R}^{|\alpha^*\cup\beta_{+}^*|\times |\beta_0^*|},\widetilde{W}^k_{13}\in \mathbb{R}^{|\alpha^*\cup\beta_{+}^*|\times |\beta^*_-|}$ and $\widetilde{W}_{22}\in\mathbb{S}_{+}^{|\beta_0^*|}$. 
From
\[
  X^*=P^*\begin{pmatrix}
     {\rm Diag}(\lambda^{+}(X^*)) & 0\\
     		0 & 0 \\
   \end{pmatrix}(P^*)^{\top}=\overline{P}\begin{pmatrix}
      {\rm Diag}(\lambda^{+}(X^*)) & 0\\
     		0 & 0 \\
   \end{pmatrix}\overline{P}^{\top},
\]
there exists a block diagonal $Q={\rm BlkDiag}(Q_1,\ldots,Q_{p})$ with $Q_j\in\mathbb{O}^{|\alpha_j|}$ for $j\in[p]$ such that $\overline{P}=P^*Q$. Together with \eqref{Wt-equa}, the matrix $Q$ meets the requirement of $\Upsilon$.    
The above arguments show that $w\in{\rm Ker}\,\mathcal{A}\cap[\![v^*]\!]^{\perp}\cap\Upsilon$. Consequently, the desired conclusion follows. 
\end{proof}
\begin{corollary}\label{Scorollary1-ptilt}
 For the set $\Upsilon$ appearing in Theorem \ref{Scond1-ptilt}, it holds  $\Upsilon=\Upsilon^*$, where $\Upsilon^*$ is defined as
 \begin{align}\label{Upsion-star}
 \Upsilon^*&:=\!\bigg\{w\!\in\mathbb{X}\ |\ \exists\, P^*\!\in\mathbb{O}(X^*)\cap\mathbb{O}(S^*)\ {\rm and}\ Q=\!{\rm BlkDiag}(Q_1,\ldots,Q_{p})\ {\rm for}\ Q_j\!\in\mathbb{O}^{|\alpha_j|}\nonumber\\ 
 &\qquad\qquad\quad{\rm s.t.}\ R:=P^*Q\in\mathbb{O}(X^*)\ {\rm and}\ [R_{\alpha^*}\  R_{\beta^*}\ R_{\gamma^*}]^{\top}(g'(x^*)w)R_{\gamma^*}=0\bigg\},
 \end{align}
\end{corollary}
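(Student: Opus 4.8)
The plan is to prove the two inclusions $\Upsilon\subseteq\Upsilon^*$ and $\Upsilon^*\subseteq\Upsilon$, each amounting to bookkeeping on the index sets once a single preliminary fact is in place. That fact is: for any $P^*\in\mathbb{O}(X^*)\cap\mathbb{O}(S^*)$ and any $Q={\rm BlkDiag}(Q_1,\ldots,Q_p)$ with $Q_j\in\mathbb{O}^{|\alpha_j|}$, the product $R:=P^*Q$ automatically lies in $\mathbb{O}(X^*)$; indeed the $j$-th diagonal block of ${\rm Diag}(\lambda(X^*))$ is the scalar matrix $\mu_j I_{|\alpha_j|}$, which commutes with $Q_j$, so $Q^\top{\rm Diag}(\lambda(X^*))Q={\rm Diag}(\lambda(X^*))$ and hence $R\,{\rm Diag}(\lambda(X^*))R^\top=X^*$. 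Thus the clause ``$R:=P^*Q\in\mathbb{O}(X^*)$'' in \eqref{Upsion-star} is no restriction, and for one and the same pair $(P^*,Q)$ the matrices $[R_{\alpha^*}\ R_{\beta^*}\ R_{\gamma^*}]^\top(g'(x^*)w)[R_{\alpha^*}\ R_{\beta^*}\ R_{\gamma^*}]$ and $\widetilde{W}:=(P^*Q)^\top(g'(x^*)w)(P^*Q)$ differ only by the simultaneous row/column permutation aligning $(\alpha^*,\beta^*,\gamma^*)$ with $[n]$. Consequently $w\in\Upsilon^*$ holds iff there is an admissible $(P^*,Q)$ for which the whole column block $\widetilde{W}_{[n]\gamma^*}$ of $\widetilde{W}$ vanishes.

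For $\Upsilon\subseteq\Upsilon^*$ I would take $w\in\Upsilon$ with witnesses $\beta_{-}^*,\beta_{+}^*,\beta_0^*,\widetilde{\gamma}^*,P^*,Q$ and keep these $(P^*,Q)$. From $[n]=\alpha^*\cup\beta^*\cup\gamma^*$, the partition $\beta^*=\beta_{-}^*\cup\beta_{+}^*\cup\beta_0^*\cup[\widetilde{\gamma}^*\backslash\gamma^*]$, and $\gamma^*\subseteq\widetilde{\gamma}^*$, one gets $[n]=\alpha^*\cup\beta_{+}^*\cup\beta_0^*\cup\beta_{-}^*\cup\widetilde{\gamma}^*$. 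The condition $\widetilde{W}_{[\alpha^*\cup\beta_{+}^*]\widetilde{\gamma}^*}=0$ annihilates the $\widetilde{\gamma}^*$-columns in the rows $\alpha^*\cup\beta_{+}^*$, and restricting $\widetilde{W}_{[\beta_0^*\cup\beta_{-}^*\cup\widetilde{\gamma}^*][\beta_{-}^*\cup\widetilde{\gamma}^*]}=0$ to the columns $\widetilde{\gamma}^*$ annihilates them in the rows $\beta_0^*\cup\beta_{-}^*\cup\widetilde{\gamma}^*$; together these rows exhaust $[n]$, so $\widetilde{W}_{[n]\widetilde{\gamma}^*}=0$ and in particular $\widetilde{W}_{[n]\gamma^*}=0$. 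Since $R:=P^*Q\in\mathbb{O}(X^*)$ as noted, this says exactly $w\in\Upsilon^*$ with the same $(P^*,Q)$.

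For $\Upsilon^*\subseteq\Upsilon$ I would take $w\in\Upsilon^*$ with witnesses $(P^*,Q)$, so $R:=P^*Q\in\mathbb{O}(X^*)$ and $\widetilde{W}_{[n]\gamma^*}=0$ for $\widetilde{W}:=(P^*Q)^\top(g'(x^*)w)(P^*Q)$, and then feed the same $(P^*,Q)$ into $\Upsilon$ together with the degenerate choice $\widetilde{\gamma}^*:=\gamma^*$, $\beta_{+}^*:=\beta^*$, $\beta_0^*:=\beta_{-}^*:=\emptyset$. The partition $\beta^*=\emptyset\cup\beta^*\cup\emptyset\cup[\gamma^*\backslash\gamma^*]$ holds; the conditions $\widetilde{W}_{[\alpha^*\cup\beta^*]\gamma^*}=0$ and $\widetilde{W}_{\gamma^*\gamma^*}=0$ are both parts of $\widetilde{W}_{[n]\gamma^*}=0$; and $\widetilde{W}_{\emptyset\emptyset}\succeq0$ holds vacuously. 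Hence $w\in\Upsilon$. If one prefers to avoid the empty-matrix convention, replace $Q_p$ by $Q_p\,{\rm BlkDiag}(U,I_{\gamma^*})$ with $U\in\mathbb{O}^{|\beta^*|}$ diagonalizing the $\beta^*$-block of $\widetilde{W}$ (this leaves $\widetilde{W}_{[n]\gamma^*}=0$ intact, as it only touches rows and columns in $\beta^*$), and take $\beta_0^*$ to be the indices where that diagonal is nonnegative and $\beta_{+}^*$ the remaining ones.

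I do not expect a genuine obstacle here: the corollary merely re-expresses $\Upsilon$, and both inclusions collapse to the single algebraic observation that the two blockwise-vanishing requirements in the definition of $\Upsilon$, together with $\gamma^*\subseteq\widetilde{\gamma}^*$, force the entire $\gamma^*$-column block of $\widetilde{W}$ to vanish, plus the automatic membership $P^*Q\in\mathbb{O}(X^*)$. The only points needing mild care are the row-index bookkeeping that makes $\alpha^*\cup\beta_{+}^*\cup\beta_0^*\cup\beta_{-}^*\cup\widetilde{\gamma}^*$ equal to $[n]$, and the fact that in the reverse inclusion one may legitimately degenerate $\beta_0^*$ (and $\beta_{-}^*$) to the empty set without violating the positive-semidefiniteness clause.
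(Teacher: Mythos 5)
Your proof is correct and follows essentially the same route as the paper: both directions reduce to observing that $R=P^*Q$ automatically lies in $\mathbb{O}(X^*)$, that the two block-vanishing conditions in $\Upsilon$ together annihilate the entire $\gamma^*$-column block of $\widetilde W$, and that the reverse inclusion is obtained by the degenerate choice $\beta_+^*=\beta^*$, $\beta_0^*=\beta_-^*=\emptyset$, $\widetilde\gamma^*=\gamma^*$. Your write-up is in fact slightly more explicit than the paper's (the commutation argument for $R\in\mathbb{O}(X^*)$ and the row bookkeeping), but there is no substantive difference.
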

\begin{proof}
 We first prove $\Upsilon\subset\Upsilon^*$. Pick any $w\in\Upsilon$. There exist index sets $\beta_{+}^*,\beta_{0}^*,\beta_{-}^*,\widetilde{\gamma}^*\subset[n]$, $P^*\in\mathbb{O}(X^*)\cap\mathbb{O}(S^*)$ and $Q={\rm BlkDiag}(Q_1,\ldots,Q_{p})$ for $Q_j\in\mathbb{O}^{|\alpha_j|}$ such that 
 $\beta^*\!=\beta_{-}^*\cup\beta_{+}^*\cup\beta_0^*\cup[\widetilde{\gamma}^*\backslash\gamma^*]$ and $\widetilde{W}_{[\alpha^*\cup\beta_{+}^*]\widetilde{\gamma}^*}\!=0,\widetilde{W}_{\beta_0^*\beta_0^*}\succeq 0\ {\rm and}\ \widetilde{W}_{[\beta_{0}^*\cup\beta_{-}^*\cup\widetilde{\gamma}^*][\beta_{-}^*\cup\widetilde{\gamma}^*]}\!=0$ for $\widetilde{W}\!=(P^*Q)^\top(g'(x^*)w)P^*Q$. Let $R=P^*Q$. By the definition of $Q$, it is easy to check that $R\in\mathbb{O}(X^*)$. Furthermore,  
 \[
  R_{\alpha^*}^{\top}(g'(x^*)w)R_{\gamma^*}=0,\,R_{\beta^*}^{\top}(g'(x^*)w)R_{\gamma^*}=0\ {\rm and}\ R_{\gamma^*}^{\top}(g'(x^*)w)R_{\gamma^*}=0.
  \]
 Thus, $w\in\Upsilon^*$. By the arbitrariness of $w\in\Upsilon$, the inclusion $\Upsilon\subset\Upsilon^*$ follows. 

 For the converse inclusion, pick any $w\in\Upsilon^*$. Set $\beta_{0}^*=\beta_{-}^*=\emptyset$, $\beta_{+}^*=\beta^*$ and $\widetilde{\gamma}^*=\gamma^*$. Clearly,  $\beta^*=\beta_{+}^*\cup\beta_{-}^*\cup\beta_{0}^*\cup[\widetilde{\gamma}^*\backslash\gamma^*]$. Then, from the expression of $\Upsilon^*$, we conclude that $w\in\Upsilon$. By the arbitrariness of $w\in\Upsilon^*$, the inclusion $\Upsilon^*\subset\Upsilon$ holds.
 \end{proof}

 Theorem \ref{Scond1-ptilt} provide a point-based sufficient characterization for the tilt-stable local minimizers of \eqref{pNSDP} by virtue of a multiplier pair of minimum rank. Its significance is theoretical because seeking such a multiplier pair is equivalent to finding a global optimal solution of the NP-hard problem 
 \begin{equation*}
 \min_{y\in\mathbb{R}^m,S\in\mathbb{S}^n}\big\{{\rm rank}(S)\ \ {\rm s.t.}\ \mathcal{A}^*y+\nabla\!g(x^*)S=-\nabla\varphi(x^*),\,S\in\mathcal{N}_{\mathbb{S}_{+}^n}(g(x^*))\big\}. 
 \end{equation*}
 Next, under a suitable restriction on the multiplier set, we provide another point-based sufficient characterization for the tilt-stable local optimal solution of problem \eqref{pNSDP}. 
\begin{theorem}\label{Scond2-ptilt}
 Consider a local optimal solution $x^*$ of problem \eqref{pNSDP} with $\nabla^2\varphi(x)\succeq 0$ on an open neighborhood $\mathcal{N}^*\!\subset\mathcal{O}$ of $x^*$. Let $v^*\!:=\!-\nabla\varphi(x^*)$ and $X^*\!:=g(x^*)$. Suppose that the multifunction $\mathcal{G}$ is metrically regular at $(x^*,(0,0))$, and that $\mathcal{M}(x^*,v^*)\!=(\mathcal{A}^*)^{-1}(v^*\!-\nabla\!g(x^*)S^*)\times\{S^*\}$ for some $S^*\in\mathcal{N}_{\mathbb{S}_{+}^n}(X^*)$. Let $\alpha^*,\gamma^*$ and $\beta^*$ be the index sets defined by \eqref{alp-gamj}-\eqref{alp-beta} with $(X,S)=(X^*,S^*)$. Then, the solution $x^*$ is tilt-stable whenever ${\rm Ker}\,\nabla^2\varphi(x^*)\cap{\rm Ker}\,\mathcal{A}\cap\widehat{\Upsilon}=\{0\}$ with 
 \begin{align*}
 \widehat{\Upsilon}&\!:=\bigg\{w\!\in\mathbb{X}\ |\ \exists\, \beta_{-}^*,\beta_{+}^*,\beta_0^*,\widetilde{\gamma}^*\subset\![n],P^*\!\in\mathbb{O}(X^*)\cap\mathbb{O}(S^*)\ {\rm and}\ Q=\!{\rm BlkDiag}(Q_1,\ldots,Q_{p})\\ 
 &\qquad {\rm with}\ Q_j\in\mathbb{O}^{|\alpha_j|}\ {\rm and}\ Q_p=\!{\rm BlkDiag}(Q_{p}^{1},\ldots,Q_{p}^{s})\ {\rm with}\  Q_{p}^{1}=\mathbb{O}^{|\beta^*|}, Q_p^{j}\in\mathbb{O}^{|\gamma_j|}\ {\rm for}\\  
 &\qquad j\in\{2,\ldots,s\}\ {\rm s.t.}\  \beta^*\!=\!\beta_{-}^*\cup\beta_{+}^*\cup\beta_0^*\cup[\widetilde{\gamma}^*\backslash\gamma^*],
 P^*Q\in\mathbb{O}(X^*)\cap\mathbb{O}(S^*),\widetilde{W}_{\beta_0^*\beta_0^*}\succeq 0,\\ 
 &\qquad\quad\widetilde{W}_{[\beta_{0}^*\cup\beta_{-}^*\cup\widetilde{\gamma}^*][\beta_{-}^*\cup\widetilde{\gamma}^*]}\!=0\ {\rm and}\ \widetilde{W}_{[\alpha^*\cup\beta_{+}^*]\widetilde{\gamma}^*}=0\ {\rm for}\ \widetilde{W}\!:=(P^*Q)^\top(g'(x^*)w)P^*Q\bigg\}.
 \end{align*}
 \end{theorem}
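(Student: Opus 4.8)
The plan is to follow the architecture of the proof of Theorem~\ref{Scond1-ptilt}, replacing the minimum-rank choice of the multiplier by the uniqueness of its $S$-component, and to keep track of the additional structure this uniqueness imposes on the limiting matrix of eigenvectors. First I would reduce the assertion to an inclusion for the set $\mathcal{W}$ attached to Lemma~\ref{Tiltlemma} with $\vartheta=\varphi$ and $h=\delta_{\Gamma}$ (so $\partial h=\mathcal{N}_{\Gamma}$ and $-\nabla\vartheta(x^*)=v^*$). The local optimality of $x^*$ gives $v^*\in\mathcal{N}_{\Gamma}(x^*)$, and by Remark~\ref{remark2-regular}(b) together with Proposition~\ref{Gamset-prop}(i) the set $\mathcal{M}(x^*,v^*)$ is nonempty, so the standing hypothesis on $\mathcal{M}(x^*,v^*)$ is consistent and $S^*\in\mathcal{N}_{\mathbb{S}_{+}^n}(X^*)$ admits a simultaneous ordered spectral decomposition \eqref{EigDecompXS} with some $P^*\in\mathbb{O}(X^*)\cap\mathbb{O}(S^*)$. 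Since $\nabla^2\varphi\succeq 0$ near $x^*$, Lemma~\ref{Tiltlemma} shows it suffices to prove $\mathcal{W}\subset{\rm Ker}\,\mathcal{A}\cap\widehat{\Upsilon}$, because then the assumed equality ${\rm Ker}\,\nabla^2\varphi(x^*)\cap{\rm Ker}\,\mathcal{A}\cap\widehat{\Upsilon}=\{0\}$ forces ${\rm Ker}\,\nabla^2\varphi(x^*)\cap\mathcal{W}=\{0\}$, hence tilt stability of $x^*$.

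Next, fix $w\in\mathcal{W}$ and, as in the proof of Theorem~\ref{Scond1-ptilt}, pick $(x^k,v^k)\to(x^*,v^*)$ in ${\rm gph}\,\mathcal{N}_{\Gamma}$ and $w^k\to w$ with $d^2\delta_{\Gamma}(x^k|v^k)(w^k)\to 0$; by robustness of metric regularity (Remark~\ref{remark1-regular}), Remark~\ref{remark2-regular}(b) and Proposition~\ref{Gamset-prop}(iii)--(v), for all large $k$ one has $w^k\in(g'(x^k))^{-1}[\mathcal{T}_{\mathbb{S}_{+}^n}(X^k)]\cap{\rm Ker}\,\mathcal{A}\cap[\![v^k]\!]^{\perp}$ with $X^k:=g(x^k)$, together with $(y^k,S^k)\in\mathcal{M}(x^k,v^k)$ for which $d^2\delta_{\Gamma}(x^k|v^k)(w^k)$ equals the value in \eqref{Gam-ssubderiv}. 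By Proposition~\ref{Mmap-prop}, $\mathcal{M}$ is locally bounded at $(x^*,v^*)$, so $\{(y^k,S^k)\}$ is bounded and converges along some $\mathcal{K}\subset\mathbb{N}$ to $(y,S)\in\mathcal{M}(x^*,v^*)$ by closedness of $\mathcal{M}$. The one genuinely new point is that the standing hypothesis forces $S=S^*$, hence $S^k\to S^*$. Passing to a further subsequence, I would define $\beta_{+}^*$ (resp. $\beta_{-}^*$) as the set of $j\in\beta^*$ along which $\lambda_j(X^k)>0$ (resp. $\lambda_j(S^k)<0$) eventually, put $\beta_0^*:=\beta^*\setminus(\beta_{+}^*\cup\beta_{-}^*)$ and $\widetilde\gamma^*:=\gamma^*$; then for large $k\in\mathcal{K}$ one verifies $\alpha_k=\alpha^*\cup\beta_{+}^*$, $\beta_k=\beta_0^*$, $\gamma_k=\beta_{-}^*\cup\widetilde\gamma^*$ and $\beta^*=\beta_{+}^*\cup\beta_{-}^*\cup\beta_0^*\cup[\widetilde\gamma^*\setminus\gamma^*]$.

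From $\langle v^k,w^k\rangle=0$, $\mathcal{A}w^k=0$ and $(y^k,S^k)\in\mathcal{M}(x^k,v^k)$ one gets $\langle S^k,g'(x^k)w^k\rangle=0$, so $g'(x^k)w^k\in\mathcal{C}_{\mathbb{S}_{+}^n}(X^k,S^k)$, and by \eqref{SDCritial} and the index identification $\widetilde{W}^k:=(P^k)^{\top}(g'(x^k)w^k)P^k$ has the block form \eqref{WkStruc}. Substituting this into \eqref{Gam-ssubderiv}, using the $\mathbb{S}_{-}^n$-convexity of $g$ (so $\langle S^*,D^2g(x^*)(w,w)\rangle\ge 0$) and letting $\mathcal{K}\ni k\to\infty$ with $d^2\delta_{\Gamma}(x^k|v^k)(w^k)\to 0$ gives, just as in Theorem~\ref{Scond1-ptilt}, that $\frac{\lambda_j(S^k)}{\lambda_i(X^k)}(\widetilde{W}^k_{ji})^2\to 0$ for every $(i,j)\in(\alpha^*\cup\beta_{+}^*)\times(\beta_{-}^*\cup\widetilde\gamma^*)$, hence $\widetilde{W}^k_{ji}\to 0$ whenever $j\in\widetilde\gamma^*$ since then $\lambda_j(S^k)\to\lambda_j(S^*)<0$. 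Passing to one more subsequence with $P^k\to\overline{P}$ and letting $k\to\infty$ in \eqref{WkStruc}, the limit $\widetilde{W}:=\overline{P}^{\top}(g'(x^*)w)\overline{P}$ satisfies $\widetilde{W}_{\beta_0^*\beta_0^*}\succeq 0$, $\widetilde{W}_{[\beta_0^*\cup\beta_{-}^*\cup\widetilde\gamma^*][\beta_{-}^*\cup\widetilde\gamma^*]}=0$ and $\widetilde{W}_{[\alpha^*\cup\beta_{+}^*]\widetilde\gamma^*}=0$. Since $X^k\to X^*$ and $S^k\to S^*$, the limit $\overline{P}$ lies in $\mathbb{O}(X^*)\cap\mathbb{O}(S^*)$; comparing $\overline{P}$ with $P^*$ through the eigenspace decomposition of $X^*$ (the blocks $\alpha_1,\dots,\alpha_p$) and of $S^*$ on the null space of $X^*$ (the blocks $\beta^*,\gamma_2,\dots,\gamma_s$ inside $\alpha_p$) yields $Q={\rm BlkDiag}(Q_1,\dots,Q_p)$ with $Q_j\in\mathbb{O}^{|\alpha_j|}$ and $Q_p={\rm BlkDiag}(Q_p^1,\dots,Q_p^s)$, $Q_p^1\in\mathbb{O}^{|\beta^*|}$, $Q_p^j\in\mathbb{O}^{|\gamma_j|}$, such that $\overline{P}=P^*Q\in\mathbb{O}(X^*)\cap\mathbb{O}(S^*)$. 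With $\mathcal{A}w=0$ (the limit of $\mathcal{A}w^k=0$), this gives $w\in{\rm Ker}\,\mathcal{A}\cap\widehat{\Upsilon}$, proving $\mathcal{W}\subset{\rm Ker}\,\mathcal{A}\cap\widehat{\Upsilon}$.

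The step I expect to be the main obstacle is the extraction of this block-diagonal $Q$: one must check that the limiting eigenvector matrix $\overline{P}$ simultaneously diagonalizes $X^*$ and $S^*$ in the canonical order and differs from $P^*$ only by rotations within the eigenspaces of $X^*$ and, inside the null space of $X^*$, within the eigenspaces of $S^*$. This is exactly where the hypothesis $\mathcal{M}(x^*,v^*)=(\mathcal{A}^*)^{-1}(v^*-\nabla g(x^*)S^*)\times\{S^*\}$ enters decisively, since without it one only obtains $\overline{P}\in\mathbb{O}(X^*)$, as in Theorem~\ref{Scond1-ptilt}, and a coarser block structure; the remaining verifications are routine or parallel to that proof.
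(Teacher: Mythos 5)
Your proposal is correct and follows essentially the same route as the paper's proof: Lemma \ref{Tiltlemma} reduces the claim to $\mathcal{W}\subset{\rm Ker}\,\mathcal{A}\cap\widehat{\Upsilon}$, the hypothesis that the $S$-component of $\mathcal{M}(x^*,v^*)$ is the singleton $\{S^*\}$ forces the limit multiplier $S$ to equal $S^*$, and this upgrades $\overline{P}\in\mathbb{O}(X^*)$ to $\overline{P}\in\mathbb{O}(X^*)\cap\mathbb{O}(S^*)$, which yields the finer block-diagonal structure of $Q_p$ exactly as the paper obtains it from the commutation of $Q_p$ with ${\rm BlkDiag}(0_{\beta^*},{\rm Diag}(\lambda^{-}(S^*)))$. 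The only cosmetic difference is that you prove the inclusion into ${\rm Ker}\,\mathcal{A}\cap\widehat{\Upsilon}$ directly, whereas the paper first proves it into ${\rm Ker}\,\mathcal{A}\cap[\![v^*]\!]^{\perp}\cap\widehat{\Upsilon}$ and then separately verifies ${\rm Ker}\,\mathcal{A}\cap\widehat{\Upsilon}\subset[\![v^*]\!]^{\perp}$; for the sufficiency claim as stated either route is valid.
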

 \begin{proof}
 Since the mapping $\mathcal{G}$ is metrically regular at $(x^*,(0,0))$, we have $\mathcal{M}(x^*,v^*)\ne\emptyset$, so there exists $y^*\in\mathbb{R}^m$ such that $\mathcal{A}^*y^*+\nabla\!g(x^*)S^*=v^*$. From $S^*\in\mathcal{N}_{\mathbb{S}_{+}^n}(X^*)$, there exists $P^*\in\mathbb{O}(X^*)\cap\mathbb{O}(S^*)$ such that $X^*$ and $S^*$ satisfy \eqref{EigDecompXS}. In addition, from Step 1 in the proof of Theorem \ref{Scond1-ptilt}, $(y,S)\in\mathcal{M}(x^*,v^*)=(\mathcal{A}^*)^{-1}(v^*\!-\!\nabla\!g(x^*)S^*)\times\{S^*\}$. This implies $S=S^*$. Together with \eqref{EigDecompXS} and the proof in Step 2 of Theorem \ref{Scond1-ptilt}, it holds  
  \[
  S^*=P^*\begin{pmatrix}
      0 & 0\\
     0 & {\rm Diag}(\lambda^{-}(S^*)) \\
   \end{pmatrix}(P^*)^{\top}=\overline{P}\begin{pmatrix}
      0& 0\\
     0 & {\rm Diag}(\lambda^{-}(S^*))\\
   \end{pmatrix}\overline{P}^{\top}.
  \]
Recall that $Q=(P^*)^{\top}\overline{P}={\rm BlkDiag}(Q_1,\ldots,Q_{p})$ with $Q_j\in\mathbb{O}^{|\alpha_j|}$ and $\beta^*=\alpha_p\backslash\gamma^*$. Write
\[
  Q_p=\begin{pmatrix}
       Q_{p}^{11}&Q_{p}^{12}\\
       Q_{p}^{21}&Q_{p}^{22}
      \end{pmatrix}\ \ {\rm with}\ \ Q_{p}^{11}\in\mathbb{R}^{|\beta^*|\times|\beta^*|}\ {\rm and}\ Q_{p}^{22}\in\mathbb{R}^{|\gamma^*|\times|\gamma^*|}.
\]
From the above two equations, it is not difficult to obtain that 
\[
  \begin{pmatrix}
       Q_{p}^{11}&Q_{p}^{12}\\
       Q_{p}^{21}&Q_{p}^{22}
 \end{pmatrix}\begin{pmatrix}
       0_{\beta^*\beta^*}& 0\\
       0&{\rm Diag}(\lambda^{-}(S^*))
      \end{pmatrix}=\begin{pmatrix}
        0_{\beta^*\beta^*}& 0\\
       0&{\rm Diag}(\lambda^{-}(S^*))
      \end{pmatrix}\begin{pmatrix}
       Q_{p}^{11}&Q_{p}^{12}\\
       Q_{p}^{21}&Q_{p}^{22}
 \end{pmatrix}.
\]
This implies that $Q_{p}^{12}=Q_{p}^{21}=0$ and $Q_{p}^{22}={\rm BlkDiag}(Q_{p}^2,\ldots,Q_{p}^{s})$ with $Q_{p}^j\in\mathbb{R}^{|\gamma_j|\times|\gamma_j|}$ for $j\in\{2,\ldots,s\}$. Take $Q_{p}^{1}=Q_{p}^{11}$. From the orthogonality of $Q_p$, we have $Q_{p}^j\in\mathbb{O}^{|\gamma_j|}$ for $j\in\{2,\ldots,s\}$ and $Q_{p}^{1}\in\mathbb{O}^{|\beta^*|}$. Then, following the proof of Theorem \ref{Scond1-ptilt}, we conclude that $x^*$ is a tilt-stable solution of \eqref{pNSDP} whenever ${\rm Ker}\nabla^2\varphi(x^*)\cap{\rm Ker}\mathcal{A}\cap[\![v^*]\!]^{\perp}\cap\widehat{\Upsilon}=\{0\}$. 
The rest only needs to argue that ${\rm Ker}\,\mathcal{A}\cap\widehat{\Upsilon}\subset[\![v^*]\!]^{\perp}$. Indeed, by the local optimality of $x^*$ and Proposition \ref{Gamset-prop} (i), $v^*\in\nabla g(x^*)\mathcal{N}_{\mathbb{S}_{+}^n}(X^*)+{\rm Im}\mathcal{A}^*$, so there exist $S\in\mathcal{N}_{\mathbb{S}_{+}^n}(X^*)$ and $\xi\in{\rm Im}\mathcal{A}^*$ such that $v^*=\nabla g(x^*)S+\xi$. Pick any $w\in{\rm Ker}\,\mathcal{A}\cap\widehat{\Upsilon}$. Then, 
\begin{equation}\label{temp-vw}
 \langle w,v^*\rangle=\langle w,\nabla g(x^*)S+\xi\rangle=\langle g'(x^*)w,S\rangle.
\end{equation}
 According to $w\in\widehat{\Upsilon}$, there exist $\beta_{-}^*,\beta_{+}^*,\beta_0^*,\widetilde{\gamma}^*\subset\![n],P^*\!\in\mathbb{O}(X^*)\cap\mathbb{O}(S^*)$ and $Q$ as in $\widehat{\Upsilon}$ such that 
 \[
 \beta^*\!=\!\beta_{-}^*\cup\beta_{+}^*\cup\beta_0^*\cup[\widetilde{\gamma}^*\backslash\gamma^*],\widetilde{W}_{[\alpha^*\cup\beta_{+}^*]\widetilde{\gamma}^*}=0, \widetilde{W}_{\beta_0^*\beta_0^*}\succeq 0\ \ {\rm and}\ \ \widetilde{W}_{[\beta_{0}^*\cup\beta_{-}^*\cup\widetilde{\gamma}^*][\beta_{-}^*\cup\widetilde{\gamma}^*]}\!=0
 \]
 with $\widetilde{W}=R^\top(g'(x^*)w)R$ for $R=P^*Q\in\mathbb{O}(X^*)\cap\mathbb{O}(S^*)$. From $S\in\mathcal{N}_{\mathbb{S}_{+}^n}(X^*)$ and equation \eqref{SDCNor}, it is easy to check that $\langle g'(x^*)w,S\rangle=\langle\widetilde{W},R^{\top}SR\rangle=0$. Then, $w\in[\![v^*]\!]^{\perp}$ follows from \eqref{temp-vw}. The inclusion ${\rm Ker}\mathcal{A}\cap\widehat{\Upsilon}\subset[\![v^*]\!]^{\perp}$ holds by the arbitrariness of $w$. The proof is completed.
 \end{proof}

 By following the proof of Corollary \ref{Scorollary1-ptilt}, we can obtain the following result for the set $\widehat{\Upsilon}$.   
\begin{corollary}\label{Scorollary2-ptilt}
 For the set $\widehat{\Upsilon}$ appearing in Theorem \ref{Scond1-ptilt}, the following relations hold 
 \begin{align*}
 \widehat{\Upsilon}&=\!\bigg\{w\!\in\mathbb{X}\,|\, \exists\,P^*\!\in\mathbb{O}(X^*)\cap\mathbb{O}(S^*)\ {\rm and}\ Q\!=\!{\rm BlkDiag}(Q_1,\ldots,Q_{p})\ {\rm for}\ Q_j\in\mathbb{O}^{|\alpha_j|}\\ 
 &\qquad\quad {\rm and}\ Q_p=\!{\rm BlkDiag}(Q_{p}^{1},\ldots,Q_{p}^{s})\ {\rm with}\  Q_{p}^{1}=\mathbb{O}^{|\beta^*|}, Q_p^{j}\in\mathbb{O}^{|\gamma_j|}\ {\rm for}\ j=2,\ldots,s\\  
 &\qquad\quad {\rm s.t.}\  
 R:=P^*Q\in\mathbb{O}(X^*)\cap\mathbb{O}(S^*)\ {\rm and}\ [R_{\alpha^*}\  R_{\beta^*}\ R_{\gamma^*}]^{\top}(g'(x^*)w)R_{\gamma^*}=0\bigg\}\\
 &=\!\bigg\{w\!\in\mathbb{X}\ |\ \exists\,R\in\mathbb{O}(X^*)\cap\mathbb{O}(S^*)\ \ {\rm s.t.}\ [R_{\alpha^*}\  R_{\beta^*}\ R_{\gamma^*}]^{\top}(g'(x^*)w)R_{\gamma^*}=0\bigg\}:=\widehat{\Upsilon}^{*}. 
 \end{align*}
 \end{corollary}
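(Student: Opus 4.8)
The plan is to follow the proof of Corollary~\ref{Scorollary1-ptilt} almost verbatim, obtaining the two displayed equalities through the chain of inclusions
\[
\widehat{\Upsilon}\ \subseteq\ \widehat{\Upsilon}_0\ \subseteq\ \widehat{\Upsilon}^{*}\ \subseteq\ \widehat{\Upsilon},
\]
where $\widehat{\Upsilon}_0$ denotes the intermediate set in the statement, i.e.\ the one described through $R=P^{*}Q$ with $Q,Q_p$ of the prescribed block-diagonal form, $R\in\mathbb{O}(X^{*})\cap\mathbb{O}(S^{*})$ and $[R_{\alpha^*}\ R_{\beta^*}\ R_{\gamma^*}]^{\top}(g'(x^*)w)R_{\gamma^*}=0$. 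The inclusion $\widehat{\Upsilon}_0\subseteq\widehat{\Upsilon}^{*}$ is immediate, since any matrix $R$ witnessing $w\in\widehat{\Upsilon}_0$ already lies in $\mathbb{O}(X^{*})\cap\mathbb{O}(S^{*})$ and satisfies the required orthogonality identity.

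For $\widehat{\Upsilon}\subseteq\widehat{\Upsilon}_0$ I would fix $w\in\widehat{\Upsilon}$ together with the associated index sets $\beta_{-}^{*},\beta_{+}^{*},\beta_{0}^{*},\widetilde{\gamma}^{*}$, the orthogonal matrices $P^{*},Q$ and $R:=P^{*}Q\in\mathbb{O}(X^{*})\cap\mathbb{O}(S^{*})$ with $Q$ of the prescribed block form, and $\widetilde{W}=R^{\top}(g'(x^{*})w)R$ obeying $\widetilde{W}_{[\alpha^{*}\cup\beta_{+}^{*}]\widetilde{\gamma}^{*}}=0$, $\widetilde{W}_{\beta_{0}^{*}\beta_{0}^{*}}\succeq0$ and $\widetilde{W}_{[\beta_{0}^{*}\cup\beta_{-}^{*}\cup\widetilde{\gamma}^{*}][\beta_{-}^{*}\cup\widetilde{\gamma}^{*}]}=0$. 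Using $\beta^{*}=\beta_{-}^{*}\cup\beta_{+}^{*}\cup\beta_{0}^{*}\cup[\widetilde{\gamma}^{*}\backslash\gamma^{*}]$ together with $\gamma^{*}\subseteq\widetilde{\gamma}^{*}$ (as established in Step~1 of the proof of Theorem~\ref{Scond1-ptilt}, where $\widetilde{\gamma}^{*}$ originates), I would decompose the partition $[n]=\alpha^{*}\cup\beta^{*}\cup\gamma^{*}$ and verify that each of the blocks $\widetilde{W}_{\alpha^{*}\gamma^{*}}$, $\widetilde{W}_{\beta^{*}\gamma^{*}}$ and $\widetilde{W}_{\gamma^{*}\gamma^{*}}$ is a sub-block of $\widetilde{W}_{[\alpha^{*}\cup\beta_{+}^{*}]\widetilde{\gamma}^{*}}$ or of $\widetilde{W}_{[\beta_{0}^{*}\cup\beta_{-}^{*}\cup\widetilde{\gamma}^{*}][\beta_{-}^{*}\cup\widetilde{\gamma}^{*}]}$ and hence vanishes; this is exactly $[R_{\alpha^*}\ R_{\beta^*}\ R_{\gamma^*}]^{\top}(g'(x^*)w)R_{\gamma^*}=0$, so $w\in\widehat{\Upsilon}_0$ with the same $P^{*},Q,R$, which already carry the block structure demanded there. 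For the last inclusion $\widehat{\Upsilon}^{*}\subseteq\widehat{\Upsilon}$ I would fix $w\in\widehat{\Upsilon}^{*}$ with some $R\in\mathbb{O}(X^{*})\cap\mathbb{O}(S^{*})$ and simply take $P^{*}:=R$, $Q:=I_{n}$, $\beta_{+}^{*}:=\beta^{*}$, $\beta_{-}^{*}:=\beta_{0}^{*}:=\emptyset$ and $\widetilde{\gamma}^{*}:=\gamma^{*}$; then $Q$ trivially has the prescribed form because the null eigenspace $\alpha_{p}$ of $X^{*}$ decomposes as $\beta^{*}\cup\bigcup_{j=2}^{s}\gamma_{j}$ (so that $I_{|\alpha_p|}={\rm BlkDiag}(I_{|\beta^*|},I_{|\gamma_2|},\ldots,I_{|\gamma_s|})$), $P^{*}Q=R\in\mathbb{O}(X^{*})\cap\mathbb{O}(S^{*})$, and the defining identity of $\widehat{\Upsilon}^{*}$ yields at once $\widetilde{W}_{[\alpha^{*}\cup\beta_{+}^{*}]\widetilde{\gamma}^{*}}=\widetilde{W}_{\gamma^{*}\gamma^{*}}=0$ for $\widetilde{W}=R^{\top}(g'(x^{*})w)R$, while $\widetilde{W}_{\beta_{0}^{*}\beta_{0}^{*}}\succeq0$ and $\beta^{*}=\beta_{-}^{*}\cup\beta_{+}^{*}\cup\beta_{0}^{*}\cup[\widetilde{\gamma}^{*}\backslash\gamma^{*}]$ hold vacuously; hence $w\in\widehat{\Upsilon}$ and the cycle closes.

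No step here is genuinely hard: the whole argument is combinatorial bookkeeping over the partition of $[n]$ into $\alpha^{*},\beta^{*},\gamma^{*}$ (disjoint because $S^{*}\preceq0$ and $S^{*}$ shares an ordered eigenbasis with $X^{*}$) and over the finer partition of $\beta^{*}$ into $\beta_{-}^{*},\beta_{+}^{*},\beta_{0}^{*},\widetilde{\gamma}^{*}\backslash\gamma^{*}$, exactly as in Corollary~\ref{Scorollary1-ptilt}. The one point I would watch most carefully---and the only real departure from that corollary---is the extra block-diagonal refinement $Q_{p}={\rm BlkDiag}(Q_{p}^{1},\ldots,Q_{p}^{s})$ that $\widehat{\Upsilon}$ imposes (it reflects that in Theorem~\ref{Scond2-ptilt} the multiplier $S^{*}$ is fixed rather than of minimum rank): one must check that this refinement is respected by the matrix $R=P^{*}Q$ produced in the first inclusion and is realized by the choice $Q=I_{n}$ in the last, both of which are straightforward once one notes that $Q_p$ preserves the decomposition $\alpha_p=\beta^{*}\sqcup\gamma^{*}$.
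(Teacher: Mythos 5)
Your proposal is correct and follows essentially the same route as the paper, which simply states that the result is obtained "by following the proof of Corollary \ref{Scorollary1-ptilt}": the forward inclusion is the same block-bookkeeping over $[n]=[\alpha^*\cup\beta_+^*]\cup[\beta_0^*\cup\beta_-^*\cup\widetilde{\gamma}^*]$ with $\gamma^*\subset\widetilde{\gamma}^*$, and the reverse inclusion uses the same witness $\beta_+^*=\beta^*$, $\beta_-^*=\beta_0^*=\emptyset$, $\widetilde{\gamma}^*=\gamma^*$, $Q=I_n$, with your added (and needed) observation that $I_{|\alpha_p|}$ respects the refinement $Q_p={\rm BlkDiag}(Q_p^1,\ldots,Q_p^s)$ because $\alpha_p=\beta^*\sqcup\gamma^*$. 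The only blemish is the cosmetic misprint $\widetilde{W}_{[\alpha^*\cup\beta_+^*]\widetilde{\gamma}^*}=\widetilde{W}_{\gamma^*\gamma^*}$ in the last step, where you clearly mean that both blocks vanish rather than that they coincide.
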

\begin{remark}
 Theorems \ref{Scond1-ptilt} and \ref{Scond2-ptilt} provide point-based sufficient characterizations for the tilt-stable local optimal solutions of \eqref{pNSDP} by leveraging a multiplier pair of minimum rank and imposing a suitable restriction on the multiplier set, respectively. The restriction on the multiplier set $\mathcal{M}(x^*,v^*)$ in Theorem \ref{Scond2-ptilt} is weaker than the uniqueness assumption on $\mathcal{M}(x^*,v^*)$. Note that $\Phi$ is prox-regular and subdifferentially continuous on $\Gamma$. By \cite[Theorem 3.3]{Drusvyatskiy13}, they also provide the sufficient characterizations for the strong metric regularity of the subdifferential mapping $\partial\Phi=\nabla\varphi+\mathcal{N}_{\Gamma}$ at the point in question. To our knowledge, these are the first point-based sufficient characterizations for the tilt stability of nonconvex SDPs without constraint nondegeneracy. 
\end{remark}
 \subsection{Necessary characterization of tilt stability}\label{sec3.2}

 In this section, for a class of affine mapping $g(x)\!:=\mathcal{B}x\!-\!B$, where $\mathcal{B}\!:\mathbb{X}\to\mathbb{S}^n$ is a linear operator and $B\in\mathbb{S}^n$ is a given matrix, we provide the necessary characterization for the tilt stability of local optimal solutions to problem \eqref{pNSDP}. 
\begin{theorem}\label{Ncond1-ptilt}
 Consider a local optimal solution $x^*$ of \eqref{pNSDP}. Let 
 $v^*\!:=-\nabla\varphi(x^*)$ and $X^*\!:=g(x^*)$. Suppose that  $\mathcal{G}$ is metrically regular at $(x^*,(0,0))$. Pick any $(y^*,S^*)\!\in\mathcal{M}(x^*,v^*)$, and let $\alpha^*,\beta^*$ and $\gamma^*$ be defined by \eqref{alp-gamj}-\eqref{alp-beta} with $(X,S)=\!(X^*,S^*)$. If $\mathcal{B}\mathbb{X}\supset\mathcal{T}_{\mathbb{S}_{+}^n}(X^*)\cap{\rm Sp}(\mathcal{N}_{\mathbb{S}_{+}^n}(-S^*))\!:=\!K^*$, the mapping $\mathcal{B}$ is injective on the set $\mathcal{B}^{-1}(K^*)$, and either of the following two conditions holds:
 \[
  {\bf(A)}\ \ {\rm Sp}(\mathcal{N}_{\mathbb{S}_{+}^n}(X^*))\cap(\mathcal{B}^*)^{-1}({\rm Im}\mathcal{A}^*)=\{0\}\ \ {\rm and}\ \ {\bf(B)}\quad\mathcal{B}^*[{\rm Sp}(\mathcal{N}_{\mathbb{S}_{+}^n}(X^*))]\subset\mathcal{A}^*\mathbb{R}^m,
 \]
 then the tilt stability of solution $x^*$ implies that ${\rm Ker}\,\nabla^2\varphi(x^*)\cap{\rm Ker}\,\mathcal{A}\cap\widetilde{\Upsilon}=\{0\}$ with
 \begin{align*}
 \widetilde{\Upsilon}&\!:=\!\bigg\{w\!\in\mathbb{X}\ |\ \exists\, \beta_{-}^*,\beta_{+}^*,\beta_0^*,\widetilde{\gamma}^*\subset[n]\ {\rm with }\ \gamma^*\subset \widetilde{\gamma}^*\ {\rm and}\ P^*\in\mathbb{O}(X^*)\cap\mathbb{O}(S^*)\ {\rm such\ that}\\ &\qquad\qquad\qquad\beta^*=\beta_{-}^*\cup\beta_{+}^*\cup\beta_0^*\cup[\widetilde{\gamma}^*\backslash\gamma^*],\, \widehat{W}_{[\alpha^*\cup\beta_{+}^*]\widetilde{\gamma}^*}=0,\,\widehat{W}_{\beta_0^*\beta_0^*}\succeq 0\\ 
 &\qquad\qquad\qquad {\rm and}\ \widehat{W}_{[\beta_{0}^*\cup\beta_{-}^*\cup\widetilde{\gamma}^*][\beta_{-}^*\cup\widetilde{\gamma}^*]}\!=0 \ {\rm with}\ 
  \widehat{W}\!:=(P^*)^\top(\mathcal{B}w)P^*\bigg\}\\
 &=\!\bigg\{w\!\in\mathbb{X}\ |\ \exists\,  P^*\in\mathbb{O}(X^*)\cap\mathbb{O}(S^*)\ {\rm s.t.}\ [P_{\alpha^*}^*\  P_{\beta^*}^*\ P_{\gamma^*}^*]^{\top}(\mathcal{B}w)P_{\gamma^*}^*=0\bigg\}:=\widetilde{\Upsilon}^*.
 \end{align*}  
 \end{theorem}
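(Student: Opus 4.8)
I would first record the set identity $\widetilde{\Upsilon}=\widetilde{\Upsilon}^*$, proved exactly as Corollary \ref{Scorollary1-ptilt}: the inclusion $\widetilde{\Upsilon}\subseteq\widetilde{\Upsilon}^*$ follows by reading off, from $\gamma^*\subseteq\widetilde{\gamma}^*$, $\beta^*=\beta_+^*\cup\beta_0^*\cup\beta_-^*\cup[\widetilde{\gamma}^*\backslash\gamma^*]$ and the two displayed relations on $\widehat{W}$, that $[P^*_{\alpha^*}\ P^*_{\beta^*}\ P^*_{\gamma^*}]^{\top}(\mathcal{B}w)P^*_{\gamma^*}=0$; the reverse inclusion is obtained by choosing $\beta_+^*=\beta^*$, $\beta_0^*=\beta_-^*=\emptyset$, $\widetilde{\gamma}^*=\gamma^*$. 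Since $\varphi$ is $\mathcal{C}^2$ on $\mathcal{O}\supseteq\Gamma$ and $\Gamma$ is convex, Lemma \ref{Tiltlemma} applies with $\vartheta=\varphi$, $h=\delta_{\Gamma}$, and its converse part turns the tilt stability of $x^*$ into ${\rm Ker}\,\nabla^2\varphi(x^*)\cap\mathcal{W}=\{0\}$, with $\mathcal{W}$ the set in \eqref{MW-set}. Hence it is enough to prove ${\rm Ker}\,\mathcal{A}\cap\widetilde{\Upsilon}\subseteq\mathcal{W}$.

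Fix $w\in{\rm Ker}\,\mathcal{A}\cap\widetilde{\Upsilon}$ with its associated $\beta_{\pm}^*,\beta_0^*,\widetilde{\gamma}^*$ and $P^*\in\mathbb{O}(X^*)\cap\mathbb{O}(S^*)$, and set $\widehat{W}:=(P^*)^{\top}(\mathcal{B}w)P^*$. From $\mathcal{A}w=0$, $v^*=\mathcal{A}^*y^*+\mathcal{B}^*S^*$ and $\widehat{W}_{\gamma^*\gamma^*}=0$ one first obtains $\langle v^*,w\rangle=\langle S^*,\mathcal{B}w\rangle=0$, i.e.\ $w\in[\![v^*]\!]^{\perp}$. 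Next, pick $\delta_k\downarrow0$, $\epsilon_k\downarrow0$ with $\epsilon_k/\delta_k\to0$ and put $X^k:=P^*D^k(P^*)^{\top}$, $S^k:=P^*E^k(P^*)^{\top}$, where $D^k,E^k$ are diagonal, $D^k$ having entries $\lambda^{+}(X^*)$ on $\alpha^*$, $\delta_k$ on $\beta_+^*$, and $0$ elsewhere, and $E^k$ having entries $\lambda^{-}(S^*)$ on $\gamma^*$, $-\epsilon_k$ on $\beta_-^*\cup[\widetilde{\gamma}^*\backslash\gamma^*]$, and $0$ elsewhere. Then $X^k\in\mathbb{S}_{+}^n$, $S^k\in\mathcal{N}_{\mathbb{S}_{+}^n}(X^k)$, $X^k\to X^*$, $S^k\to S^*$; the positive eigenvalues of $X^k$ sit exactly on $\alpha^*\cup\beta_+^*$, and $\ker X^k$ is spanned by the columns of $P^*$ indexed by $\beta_0^*\cup\beta_-^*\cup\widetilde{\gamma}^*$.

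The crux is to realize $X^k$ as $g(x^k)$ with $x^k\in\Gamma$ and $x^k\to x^*$. In the $P^*$-basis, $X^k-X^*$ is positive semidefinite and supported on the $\beta_+^*$-block, hence $X^k-X^*\in\mathcal{T}_{\mathbb{S}_{+}^n}(X^*)\cap{\rm Sp}(\mathcal{N}_{\mathbb{S}_{+}^n}(-S^*))=K^*$. Because $\mathcal{B}\mathbb{X}\supseteq K^*$ and $\mathcal{B}$ is injective on $\mathcal{B}^{-1}(K^*)$ (note $0\in K^*$, so $\mathcal{B}$ is injective and, $\mathcal{B}^{-1}(K^*)$ being a closed cone, is bounded below on it), there is a unique $u^k$ with $\mathcal{B}u^k=X^k-X^*$ and $\|u^k\|\le c\,\|X^k-X^*\|\to0$. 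Condition (A) (resp.\ (B)) is used here --- after, if needed, correcting $X^k-X^*$ by a vanishing element of $K^*$ supported on the $\alpha^*\times(\alpha^*\cup\beta_+^*)$-block, which alters neither the eigenvalue pattern nor membership in $K^*$ --- to guarantee $u^k\in{\rm Ker}\,\mathcal{A}$: under (A) this rests on the fact that every $T$ with $\mathcal{B}^*T\in{\rm Im}\,\mathcal{A}^*$ is orthogonal to the perturbation block, and under (B) on $\mathcal{B}^*[{\rm Sp}(\mathcal{N}_{\mathbb{S}_{+}^n}(X^*))]\subseteq{\rm Im}\,\mathcal{A}^*$ (which, together with the standing metric regularity, makes the construction degenerate). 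Thus $x^k:=x^*+u^k$ satisfies $\mathcal{A}x^k=b$, $g(x^k)=X^k\in\mathbb{S}_{+}^n$, i.e.\ $x^k\in\Gamma$, and $x^k\to x^*$. Setting $v^k:=\mathcal{A}^*y^*+\mathcal{B}^*S^k$ we get $v^k\to v^*$ and, by robustness of the metric regularity of $\mathcal{G}$ and Proposition \ref{Gamset-prop}(i), for large $k$, $v^k\in\mathcal{B}^*\mathcal{N}_{\mathbb{S}_{+}^n}(X^k)+{\rm Im}\,\mathcal{A}^*=\mathcal{N}_{\Gamma}(x^k)$; so $(x^k,v^k)\in{\rm gph}\,\mathcal{N}_{\Gamma}$ and $(x^k,v^k)\to(x^*,v^*)$.

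Finally take $w^k:=w$. Then $w\in\mathcal{C}_{\Gamma}(x^k,v^k)={\rm dom}\,d^2\delta_{\Gamma}(x^k|v^k)$: indeed $\mathcal{A}w=0$; $\langle v^k,w\rangle=\langle v^*,w\rangle+\langle S^k-S^*,\mathcal{B}w\rangle=-\epsilon_k\sum_{j\in\beta_-^*\cup[\widetilde{\gamma}^*\backslash\gamma^*]}\widehat{W}_{jj}=0$ since $\widehat{W}_{[\beta_0^*\cup\beta_-^*\cup\widetilde{\gamma}^*][\beta_-^*\cup\widetilde{\gamma}^*]}=0$; and $\mathcal{B}w\in\mathcal{T}_{\mathbb{S}_{+}^n}(X^k)$ because the compression of $\widehat{W}$ to the block $\beta_0^*\cup\beta_-^*\cup\widetilde{\gamma}^*$ equals ${\rm BlkDiag}(\widehat{W}_{\beta_0^*\beta_0^*},0)\succeq0$. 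It remains to check $d^2\delta_{\Gamma}(x^k|v^k)(w)\to0$. As $g$ is affine ($D^2g\equiv0$, $g'(x^k)=\mathcal{B}$) and (A) (resp.\ (B)) forces the matrix component of $\mathcal{M}(x^k,v^k)$ to be $S^k$, Proposition \ref{Gamset-prop}(v), \eqref{Gam-ssubderiv} and Lemma \ref{sigma-Lemma} give
\[
d^2\delta_{\Gamma}(x^k|v^k)(w)=-2\big\langle S^k,(\mathcal{B}w)(X^k)^{\dagger}(\mathcal{B}w)\big\rangle=-2\!\!\sum_{i\in\alpha^*\cup\beta_+^*}\ \sum_{j\in\beta_-^*\cup\widetilde{\gamma}^*}\frac{\lambda_j(S^k)}{\lambda_i(X^k)}\big(\widehat{W}_{ij}\big)^{2}.
\]
The relation $\widehat{W}_{[\alpha^*\cup\beta_+^*]\widetilde{\gamma}^*}=0$ kills all terms with $j\in\widetilde{\gamma}^*$, while for $j\in\beta_-^*$ one has $\lambda_j(S^k)=-\epsilon_k$ and $\widehat{W}_{\beta_-^*[\alpha^*\cup\beta_+^*]}$ a fixed bounded matrix, so those terms are $O(\epsilon_k)+O(\epsilon_k/\delta_k)\to0$. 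Hence $d^2\delta_{\Gamma}(x^k|v^k)(w)\to0$, so $w\in\mathcal{W}$, and the theorem follows. The hard part is precisely the realization step --- showing, via $\mathcal{B}\mathbb{X}\supseteq K^*$, injectivity of $\mathcal{B}$ and condition (A) (resp.\ (B)), that $X^k$ is the image of a feasible point and that $\mathcal{M}(x^k,v^k)$ has the single matrix component $S^k$; the remainder is bookkeeping with the simultaneous spectral decomposition.
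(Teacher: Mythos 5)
Your overall strategy coincides with the paper's: reduce via Lemma \ref{Tiltlemma} to showing ${\rm Ker}\,\mathcal{A}\cap\widetilde{\Upsilon}\subset\mathcal{W}$, build perturbations $X^k\to X^*$ and $S^k\in\mathcal{N}_{\mathbb{S}_{+}^n}(X^k)$ with $S^k\to S^*$ that are diagonal in the $P^*$-frame, lift $X^k-X^*\in K^*$ to $x^k$ using $K^*\subset\mathcal{B}\mathbb{X}$ together with the injectivity of $\mathcal{B}$ on $\mathcal{B}^{-1}(K^*)$, and evaluate $d^2\delta_{\Gamma}(x^k|v^k)(w)$ via \eqref{Gam-ssubderiv}. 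Your two-scale perturbation ($\delta_k$ on $\beta_{+}^*$ in $X^k$, $-\epsilon_k$ on $\beta_{-}^*\cup[\widetilde{\gamma}^*\backslash\gamma^*]$ in $S^k$, with $\epsilon_k/\delta_k\to 0$) differs from the paper's single-scale one, which puts $1/k$ on all of $\beta_{+}^*\cup\beta_0^*\cup\beta_{-}^*$ in $X^k$ and $-1/k$ only on $\widetilde{\gamma}^*\backslash\gamma^*$ in $S^k$, so that $\beta_k=\emptyset$ and the eigenvalue quotient vanishes identically; your $O(\epsilon_k)+O(\epsilon_k/\delta_k)$ estimate nevertheless closes case (A) correctly.

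The genuine gap is your treatment of condition (B). You assert that (B) ``forces the matrix component of $\mathcal{M}(x^k,v^k)$ to be $S^k$''; it does essentially the opposite. Since $\mathcal{N}_{\mathbb{S}_{+}^n}(X^k)\subset\mathcal{N}_{\mathbb{S}_{+}^n}(X^*)$, condition (B) guarantees that for \emph{every} $S\in\mathcal{N}_{\mathbb{S}_{+}^n}(X^k)$ there exists $y$ with $(y,S)\in\mathcal{M}(x^k,v^k)$, so the multiplier set is as large as possible and your displayed single-multiplier formula for $d^2\delta_{\Gamma}(x^k|v^k)(w)$ is not justified. The paper's Case 2 instead uses homogeneity: under (B) the feasible set of the supremum problem is stable under $S\mapsto t\widetilde S$ for $t>0$, so a positive value would make the supremum $+\infty$, contradicting $w\in{\rm dom}\,d^2\delta_{\Gamma}(x^k|v^k)$; combined with the nonnegativity of the supremum coming from the convexity of $\Gamma$, the value is $0$. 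A secondary issue: you invoke (A)/(B) (plus an unexplained ``correction on the $\alpha^*\times(\alpha^*\cup\beta_{+}^*)$-block'') to secure $u^k\in{\rm Ker}\,\mathcal{A}$, but neither condition addresses the solvability of the system $\mathcal{B}u=X^k-X^*$, $\mathcal{A}u=0$; in the paper these conditions enter only in the evaluation of the supremum, not in the construction of $x^k$ (whose membership $\mathcal{A}x^k=b$ the paper itself leaves implicit), so this part of your argument should either be dropped or given an actual proof.
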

 \begin{proof}
 According to Lemma \ref{Tiltlemma}, it suffices to prove that ${\rm Ker}\mathcal{A}\cap\widetilde{\Upsilon}\subset\mathcal{W}$, where $\mathcal{W}$ is the set defined in \eqref{MW-set}. Pick any $w\in{\rm Ker}\mathcal{A}\cap\widetilde{\Upsilon}$. Then, $\mathcal{A}w=0$, and by the definition of $\widetilde{\Upsilon}$, there exist index sets $\beta_{+}^*,\beta_{0}^*,\beta_{-}^*,\widetilde{\gamma}^*\subset[n]$ and $P^*\in\mathbb{O}(X^*)\cap\mathbb{O}(S^*)$ such that $\beta^*\!=\!\beta_{-}^*\cup\beta_0^*\cup\beta_{+}^*\cup\gamma_0^*$ with $\gamma_0^*:=\widetilde{\gamma}^*\backslash\gamma^*$ and 
\begin{equation}\label{Wtilde-equa-d}
 \widehat{W}\!:=(P^*)^\top (\mathcal{B}w)P^*=\begin{pmatrix}
  \widehat{W}_{11}& \widehat{W}_{12} & \widehat{W}_{13} & 0_{[\alpha^*\cup\beta_{+}^*]\widetilde{\gamma}^*}\\
  \widehat{W}_{12}^{\top}&\widehat{W}_{22} &0  &  0\\
  \widehat{W}_{13}^{\top}& 0 & 0 & 0\\
  0_{\widetilde{\gamma}^*[\alpha^*\cup\beta_{+}^*]} & 0 & 0 & 0
  \end{pmatrix}
\end{equation}
 with $\widehat{W}_{11}\in\mathbb{S}^{|\alpha^*\cup\beta_{+}^*|},\widehat{W}_{12}\in \mathbb{R}^{|\alpha^*\cup\beta_{+}^*|\times |\beta_{0}^*|},\widehat{W}_{13}\in \mathbb{R}^{|\alpha^*\cup\beta_{+}^*|\times |\beta_{-}^*|}$ and $\widehat{W}_{22}\in\mathbb{S}^{|\beta_{0}^*|}_+$. From $P^*\in\mathbb{O}(X^*)\cap\mathbb{O}(S^*)$, the matrices $X^*$ and $S^*$ have the following eigenvalue decomposition
 \begin{align}\label{EigDecomp}
  X^*\!=P^*\begin{pmatrix}
   \Lambda^{+}(X^*) & 0\\
   0 & 0\\
  \end{pmatrix}(P^*)^{\top}\ {\rm and}\  
  S^*\!=P^*\begin{pmatrix}
      0 & 0 \\
     0 & \Lambda^{-}(S^*)
 \end{pmatrix}(P^*)^{\top}
 \end{align}
 with $\Lambda^{+}(X^*)={\rm Diag}(\lambda^{+}(X^*))$ and $\Lambda^{-}(S^*)={\rm Diag}(\lambda^{-}(S^*))$. As $(y^*,S^*)\in\mathcal{M}(x^*,v^*)$, it holds $S^*\in\mathcal{N}_{\mathbb{S}_{+}^n}(X^*)$. We proceed the proof by the following three steps.
 
 \noindent
 {\bf Step 1: to construct a sequence $\{((x^k,v^k),w^k)\}_{k\in\mathbb{N}}\subset{\rm gph}\,\mathcal{N}_{\Gamma}\times\mathbb{X}$.} For each $k\in\mathbb{N}$, let 
 $x^k$ be such that $\mathcal{B}(x^k-x^*)=X^k-X^*$ for $X^k=P^*{\rm BlkDiag}(\Lambda^{+}(X^*),\frac{1}{k}I_{|\beta^*\backslash\gamma_0^*|},0_{\widetilde{\gamma}^*}){P^*}^{\top}$,
 $w^k\!=w$ and $v^k\!=\mathcal{B}^*S^k+\mathcal{A}^*y^*$ with $S^k\!=\!P^*{\rm BlkDiag}(0_{\alpha^*\cup (\beta^*\setminus \gamma^*_0)},-\frac{1}{k}I_{|\gamma^*_0|},{\Lambda^{-}(S^*)}){P^*}^{\top}$. Fix any $k\in\mathbb{N}$. From $X^k\in\mathbb{S}_{+}^n$ and $X^*\in\mathbb{S}_{+}^n$, we have $X^k\!-\!X^*\in\mathcal{T}_{\mathbb{S}_{+}^n}(X^*)$. By the expressions of $X^k$ and $X^*$ and equation \eqref{SDCNor}, it is easy to check that $-X^*\in\mathcal{N}_{\mathbb{S}_{+}^n}(-S^*)$ and $-X^k\in\mathcal{N}_{\mathbb{S}_{+}^n}(-S^*)$, so $X^k\!-\!X^*\in{\rm Sp}(\mathcal{N}_{\mathbb{S}_{+}^n}(-S^*))$. Thus, $X^k\!-\!X^*\in\mathcal{T}_{\mathbb{S}_{+}^n}(X^*)\cap{\rm Sp}(\mathcal{N}_{\mathbb{S}_{+}^n}(-S^*))=K^*$. The given assumption $K^*\subset\mathcal{B}\mathbb{X}$ implies that $x^k$ is well defined. Combining $\mathcal{B}(x^k\!-\!x^*)=X^k\!-\!X^*$ with $X^*=\mathcal{B}x^*\!-\!B$ leads to $X^k=\mathcal{B}x^k\!-\!B=g(x^k)$. On the other hand, by the expressions of $X^k$ and $S^k$, we have $S^k\in\mathcal{N}_{\mathbb{S}_{+}^n}(X^k)$. Along with $v^k=\mathcal{B}^*S^k+\mathcal{A}^*y^*$, it follows that $((x^k,v^k),w^k)\in {\rm gph}\mathcal{N}_{\Gamma}\times\mathbb{X}$ for each $k\in\mathbb{N}$.  

 \medskip
 \noindent
 {\bf Step 2: to prove the existence $\mathcal{K}\subset\mathbb{N}$ such that $\lim_{\mathcal{K}\ni k\to\infty}(x^k,v^k,w^k)=(x^*,v^*,w)$}.
 We claim that the sequence $\{x^k\!-\!x^*\}_{k\in\mathbb{N}}$ is bounded. If not, $\lim_{k\to\infty}\|x^k\!-\!x^*\|=\infty$. Along with $\lim_{k\to\infty}X^k=X^*$ and $\mathcal{B}(x^k\!-\!x^*)=X^k\!-\!X^*$, we have
 $\lim_{k\to\infty}\mathcal{B}\frac{x^k-x^*}{\|x^k-x^*\|}=0$. According to the boundedness of $\{\frac{x^k-x^*}{\|x^k-x^*\|}\}_{k\in\mathbb{N}}$, there exists an index set $\mathcal{K}_0\subset\mathbb{N}$ such that $\lim_{\mathcal{K}_0\ni k\to\infty}\frac{x^k-x^*}{\|x^k-x^*\|}=z^*$ with $\|z^*\|=1$. Noting that $\frac{x^k-x^*}{\|x^k-x^*\|}\in\mathcal{B}^{-1}(K^*)$ and the set $\mathcal{B}^{-1}(K^*)$ is closed, we have $z^*\in\mathcal{B}^{-1}(K^*)$, which by the injectivity of $\mathcal{B}$ on the set $\mathcal{B}^{-1}(K^*)$ results in $z^*=0$, a contradiction to $\|z^*\|=1$. Thus, the claimed conclusion holds, i.e., the sequence $\{x^k\!-\!x^*\}_{k\in\mathbb{N}}$ is bounded. Consequently, there exists an infinite index set $\mathcal{K}$ such that $\{x^k\!-\!x^*\}_{k\in\mathcal{K}}$ is convergent and its limit, denoted by $\widetilde{z}^*$, satisfies $\mathcal{B}\widetilde{z}^*=0$. Obviously, $\widetilde{z}^*\in \mathcal{B}^{-1}(K^*)$. The injectivity of $\mathcal{B}$ on $\mathcal{B}^{-1}(K^*)$ implies $\widetilde{z}^*=0$, and then $\lim_{\mathcal{K}\ni k\to\infty}x^k=x^*$ follows. While from $\lim_{k\to\infty}S^k=S^*$, we get $\lim_{k\to\infty}v^k=\mathcal{B}^*S^*+\mathcal{A}^*y^*=v^*$. The proof of this step is finished. 
  
 \medskip
 \noindent
 {\bf Step 3: to prove that $\lim_{\mathcal{K}\ni k\to \infty} d^2\delta_{\Gamma}(x^k|v^k)(w^k)=0$}. For each $k\in\mathcal{K}$, define 
 \begin{equation}\label{indexk1}
 \alpha_k\!:=\!\{i\in[n]\,|\,\lambda_i(X^k)>0\},\ \gamma_k\!:=\!\{i\in[n]\,|\,\lambda_i(S^k)<0\}\ {\rm and}\ \beta_k\!:=[n]\backslash(\alpha_k\cup\gamma_k). 
 \end{equation}
 Clearly, $\alpha_k=\alpha^*\cup(\beta^*\backslash\gamma_0^*),\gamma_k=\widetilde{\gamma}^*$ and $\beta_k=\emptyset$. From \eqref{Wtilde-equa-d} and \eqref{SDCTan}, for each $k\in\mathcal{K}$,   
 \[
   \mathcal{B}w^k=\mathcal{B}w\in\mathcal{T}_{\mathbb{S}_{+}^n}(X^k)
   \ \ {\rm and}\ \ \langle v^k, w^k\rangle=\langle \mathcal{B}^*S^k\!+\mathcal{A}^*y^*, w\rangle=\langle S^k,\mathcal{B}w\rangle=0.
 \]
 Combining these two equalities with $\mathcal{A}w^k=\mathcal{A}w=0$ and Proposition \ref{Gamset-prop} (iii) yields that $w^k\in\mathcal{C}_{\Gamma}(x^k,v^k)$ for each $k\in\mathcal{K}$, so $w^k\in{\rm dom}\,d^2 \delta_{\Gamma}(x^k|v^k)$ by Proposition \ref{Gamset-prop} (iv), and
 \begin{align}\label{max-prob}
  d^2 \delta_{\Gamma}(x^k|v^k)(w^k)
 &=2\max_{y\in \mathbb{R}^m, S\in\mathcal{N}_{\mathbb{S}_{+}^n}(X^k)\atop v^k=\mathcal{B}^*S+\mathcal{A}^* y}\!\langle -S,(\mathcal{B}w^k)(X^k)^{\dagger}(\mathcal{B}w^k)\rangle.
 \end{align}
 Next we prove $\lim_{\mathcal{K}\ni k\to \infty} d^2\delta_{\Gamma}(x^k|v^k)(w^k)=0$ under conditions (A) and (B), respectively. 

 \noindent
 {\bf Case 1: the condition (A) holds.} Fix any $k\in\mathcal{K}$. We claim that any feasible point $(y,S)$ of the maximum problem \eqref{max-prob} satisfies $S=S^k$. Pick any feasible solution $(y,S)$ of \eqref{max-prob}. We have $S\in\mathcal{N}_{\mathbb{S}_{+}^n}(X^k)$ and $v^k=\mathcal{B}^*S+\mathcal{A}^*y$. Along with $v^k=\mathcal{B}^*S^k+\mathcal{A}^*y^*$, 
 \begin{equation}\label{yS-equa}
  \mathcal{A}^*(y-y^*)=\mathcal{B}^*(S^k-S). 
  \end{equation}
  By the expressions of $X^k$ and $X^*$, it is easy to check that $[\![X^k]\!]^{\perp}\subset[\![X^*]\!]^{\perp}$, so it holds 
 \begin{equation}\label{key-relation}
  \mathcal{N}_{\mathbb{S}_{+}^n}(X^k)=\mathbb{S}_{-}^n\cap[\![X^k]\!]^{\perp}\subset\mathbb{S}_{-}^n\cap[\![X^*]\!]^{\perp}=\mathcal{N}_{\mathbb{S}_{+}^n}(X^*). 
 \end{equation}
 Recall that $S^k\in\mathcal{N}_{\mathbb{S}_{+}^n}(X^k)$ and  $S\in\mathcal{N}_{\mathbb{S}_{+}^n}(X^k)$, so we have $S^k-S\in{\rm Sp}(\mathcal{N}_{\mathbb{S}_{+}^n}(X^*))$. Along with the above \eqref{yS-equa} and condition (A), we get $S=S^k$, and the claimed conclusion holds. Now, from equation \eqref{max-prob} and $w^k\equiv w$, for each $k\in\mathcal{K}$ it holds
 \begin{align*}
  d^2 \delta_{\Gamma}(x^k|v^k)(w^k)
  &=2\langle -S^k,(\mathcal{B}w^k)(X^k)^{\dagger}(\mathcal{B}w^k)\rangle\\
  &=2\langle -(P^*)^{\top}S^kP^*,\widehat{W}(P^*)^{\top}(X^k)^{\dagger}(P^*)\widehat{W}\rangle\\
  &=-2\sum_{j\in \gamma_k}\sum_{i\in \alpha_k}\frac{\lambda_{j}(S^k)}{\lambda_i(X^k)}(\widehat{W}_{ji})^2=0,
 \end{align*}
 where the third equality is due to \eqref{Wtilde-equa-d}, and the fourth is obtained by using $\gamma_k=\widetilde{\gamma}^*$ and $\alpha_k=\alpha^*\cup(\beta^*\backslash\gamma_0^*)$. The above equation implies that $\lim_{\mathcal{K}\ni k\to \infty}d^2 \delta_{\Gamma}(x^k|v^k)(w^k)=0$.

 \medskip
 \noindent
 {\bf Case 2: the condition (B) holds.} Fix any $k\in\mathcal{K}$. We claim that the optimal value of the maximum problem \eqref{max-prob} cannot be positive. If not, let $(\widetilde{y},\widetilde{S})$ be an optimal solution of \eqref{max-prob} with $d^2\delta_{\Gamma}(x^k|v^k)(w^k)=\varpi_k>0$. Obviously, for every $t>0$, $t\widetilde{S}\in\mathcal{N}_{\mathbb{S}_{+}^n}(X^k)$. Together with $S^k\in\mathcal{N}_{\mathbb{S}_{+}^n}(X^k)$, using \eqref{key-relation} leads to $t\widetilde{S}\!-\!S^k\in{\rm Sp}(\mathcal{N}_{\mathbb{S}_{+}^n}(X^*))$.  According to the condition (B), for every $t>0$, there exists $y_{t}\in\mathbb{R}^m$ such that $\mathcal{A}^*(y^*\!-\!y_{t})=\mathcal{B}^*(t\widetilde{S}\!-\!S^k)$ or $\mathcal{B}^*(t\widetilde{S})+\mathcal{A}^*y_{t}=\mathcal{A}^*y^*+\mathcal{B}^*S^k=v^k$, which along with $t\widetilde{S}\in\mathcal{N}_{\mathbb{S}_{+}^n}(X^k)$ means that $(y_{t},t\widetilde{S})$ is a feasible solution of \eqref{max-prob} with the objective value $t\varpi_k$. This implies that the optimal value of \eqref{max-prob} is $\infty$, a contradiction to $w^k\in{\rm dom}\,d^2 \delta_{\Gamma}(x^k|v^k)$ for each $k\in\mathcal{K}$. Thus, the claimed conclusion holds. Note that the optimal value of problem \eqref{max-prob} is nonnegative due to the convexity of $\Gamma$. Then, for each $k\in\mathcal{K}$, the optimal value of \eqref{max-prob} equals $0$. We get $\lim_{\mathcal{K}\ni k\to \infty}d^2 \delta_{\Gamma}(x^k|v^k)(w^k)=0$. 
  
 The above three steps show that $w\in\mathcal{W}$ by noting that $\mathcal{W}$ in \eqref{MW-set} can be written as
 \begin{align*}
 \mathcal{W}&=\Big\{w\in \mathbb{X}\ |\ \exists\,\mathcal{K}\subset\mathbb{N}\ {\rm and}\ \{((x^k,v^k),w^k)\}_{k\in\mathcal{K}}\subset{\rm gph}\,\mathcal{N}_{\Gamma}\times\mathbb{X}\ {\rm with}\nonumber\\
 &\qquad\qquad\quad \lim_{\mathcal{K}\ni k\to \infty}(x^k,v^k,w^k)= (x^*,v^*,w)\ {\rm s.t.}\ \lim_{k\to \infty} d^2\delta_{\Gamma}(x^k|v^k)(w^k)=0\Big\}.
 \end{align*}  
 By the arbitrariness of $w\in{\rm Ker}\mathcal{A}\cap\widetilde{\Upsilon}$, we have ${\rm Ker}\nabla^2\varphi(x^*)\cap{\rm Ker}\mathcal{A}\cap\widetilde{\Upsilon}=\{0\}$. Moreover, by following the proof of Corollary \ref{Scorollary1-ptilt}, it is easy to check that $\widetilde{\Upsilon}=\widetilde{\Upsilon}^*$. 
 \end{proof}
\begin{remark}\label{remark3-Ncond}
 {\bf(a)} Under the setting of this section, the condition (A) is implied by the constraint nondegeneracy of problem \eqref{pNSDP} at $x^*$ by noting that the latter is equivalent to the following implication
 \[
  \mathcal{A}^*\Delta y+\mathcal{B}^*\Delta S=0,\,\Delta S\in{\rm Sp}(\mathcal{N}_{\mathbb{S}_{+}^n}(X^*))\ \Longrightarrow (\Delta y,\Delta S)=(0,0),
 \]
 while the condition (B) is equivalent to ${\rm Ker}\mathcal{A}\subset\mathcal{B}^{-1}[{\rm lin}(\mathcal{T}_{\mathbb{S}_{+}^n}(X^*))]$, which has no implication relation with the constraint nondegeneracy of problem \eqref{pNSDP} at $x^*$. 

 \medskip
 \noindent
 {\bf(b)} From Step 3 in the proof of Theorem \ref{Ncond1-ptilt}, for problem \eqref{pNSDP} without the constraint $\mathcal{A}x=b$, the condition (A) reduces to ${\rm Ker}\,\mathcal{B}^*\cap{\rm Sp}(\mathcal{N}_{\mathbb{S}_{+}^n}(X^*))=\{0\}$, while the condition (B) can be replaced by this one by noting that $\mathcal{B}^*(t\widetilde{S})=tv^k=\mathcal{B}^*(tS^k)$. 
\end{remark} 

 Comparing with the definition of $\Upsilon$ in Theorem \ref{Scond1-ptilt}, we see that $\widetilde{\Upsilon}$ is a subset of $\Upsilon$ with the block diagonal orthogonal $Q$ specified as an identity matrix. This means that the necessary characterization is weaker than the converse conclusion of Theorem \ref{Scond1-ptilt}. When replacing $\widetilde{\Upsilon}$ with $\Upsilon$ in the conclusion of Theorem \ref{Ncond1-ptilt}, we meet a great challenge to construct the sequence $\{((x^k,v^k),w^k)\}_{k\in\mathbb{N}}\subset{\rm gph}\,\mathcal{N}_{\Gamma}\times\mathbb{X}$ satisfying \eqref{XGamWk-equa1}. The main reason to cause this difficulty is that a useful structure for $Q_p$, the last block of $Q$, cannot be achieved because the matrices $S^*$ and $S$ in the proof of Theorem \ref{Scond1-ptilt} have no relation. However, when replacing $\widetilde{\Upsilon}$ with the set $\widehat{\Upsilon}$ appearing in Theorem \ref{Scond2-ptilt}, this difficulty can be overcome under the conditions of Theorem \ref{Ncond1-ptilt}. This result is stated as follows. 

\begin{theorem}\label{Ncond2-ptilt}
 Consider a local optimal solution $x^*$ of \eqref{pNSDP}. Let $v^*\!:=-\nabla\varphi(x^*)$ and $X^*\!:=g(x^*)$. Suppose that $\mathcal{G}$ is metrically regular at $(x^*,(0,0))$. Pick any $(y^*,S^*)\in\mathcal{M}(x^*,v^*)$. Let $\alpha^*,\gamma^*$ and $\beta^*$ be defined by \eqref{alp-gamj}-\eqref{alp-beta} with $(X,S)=(X^*,S^*)$. Then, under the conditions of Theorem \ref{Ncond1-ptilt}, the tilt stability of $x^*$ implies ${\rm Ker}\nabla^2\varphi(x^*)\cap{\rm Ker}\mathcal{A}\cap\widehat{\Upsilon}=\{0\}$ or equivalently ${\rm Ker}\nabla^2\varphi(x^*)\cap{\rm Ker}\mathcal{A}\cap\widehat{\Upsilon}^{*}=\{0\}$.
 \end{theorem}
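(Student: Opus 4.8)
The plan is to mirror the proof of Theorem~\ref{Ncond1-ptilt}. By the necessity part of Lemma~\ref{Tiltlemma} (applied with $\vartheta=\varphi$ and $h=\delta_{\Gamma}$), the tilt stability of $x^*$ already forces ${\rm Ker}\,\nabla^2\varphi(x^*)\cap\mathcal{W}=\{0\}$ with $\mathcal{W}$ the set in \eqref{MW-set}, so it suffices to establish the inclusion ${\rm Ker}\,\mathcal{A}\cap\widehat{\Upsilon}\subset\mathcal{W}$; the equivalence between the conclusion phrased with $\widehat{\Upsilon}$ and the one phrased with $\widehat{\Upsilon}^{*}$ is then free, since Corollary~\ref{Scorollary2-ptilt} states $\widehat{\Upsilon}=\widehat{\Upsilon}^{*}$. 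My first move would be to notice that the inclusion ${\rm Ker}\,\mathcal{A}\cap\widehat{\Upsilon}\subset\mathcal{W}$ is in fact already contained in Theorem~\ref{Ncond1-ptilt}: in the present affine setting $g(x)=\mathcal{B}x-B$ one has $g'(x^*)=\mathcal{B}$, so the simplified description $\widehat{\Upsilon}=\{w\,|\,\exists\,R\in\mathbb{O}(X^*)\cap\mathbb{O}(S^*)\ \text{s.t.}\ [R_{\alpha^*}\ R_{\beta^*}\ R_{\gamma^*}]^{\top}(\mathcal{B}w)R_{\gamma^*}=0\}$ supplied by Corollary~\ref{Scorollary2-ptilt} coincides word for word with the simplified form $\widetilde{\Upsilon}=\widetilde{\Upsilon}^{*}$ computed in the second displayed equality of Theorem~\ref{Ncond1-ptilt}. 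Hence $\widehat{\Upsilon}=\widetilde{\Upsilon}$, so ${\rm Ker}\,\nabla^2\varphi(x^*)\cap{\rm Ker}\,\mathcal{A}\cap\widehat{\Upsilon}={\rm Ker}\,\nabla^2\varphi(x^*)\cap{\rm Ker}\,\mathcal{A}\cap\widetilde{\Upsilon}$, and the assertion is precisely the one delivered by Theorem~\ref{Ncond1-ptilt} under exactly its hypotheses.

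If a self-contained argument is preferred over this reduction, I would re-run Steps~1--3 of the proof of Theorem~\ref{Ncond1-ptilt} verbatim, with the orthogonal matrix $R:=P^*Q\in\mathbb{O}(X^*)\cap\mathbb{O}(S^*)$ (supplied by the very definition of $\widehat{\Upsilon}$) taking the role played there by $P^*$. Concretely, given $w\in{\rm Ker}\,\mathcal{A}\cap\widehat{\Upsilon}$ with its associated index sets $\beta_-^*,\beta_+^*,\beta_0^*,\widetilde{\gamma}^*$, set $\gamma_0^*:=\widetilde{\gamma}^*\backslash\gamma^*$, let $x^k$ solve $\mathcal{B}(x^k-x^*)=X^k-X^*$ with $X^k=R\,{\rm BlkDiag}({\rm Diag}(\lambda^{+}(X^*)),\frac{1}{k}I_{|\beta^*\backslash\gamma_0^*|},0_{\widetilde{\gamma}^*})R^{\top}$, put $w^k=w$, and $v^k=\mathcal{B}^*S^k+\mathcal{A}^*y^*$ with $S^k=R\,{\rm BlkDiag}(0_{\alpha^*\cup(\beta^*\backslash\gamma_0^*)},-\frac{1}{k}I_{|\gamma_0^*|},{\rm Diag}(\lambda^{-}(S^*)))R^{\top}$. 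Then each of the three steps carries over: the feasibility $(x^k,v^k)\in{\rm gph}\,\mathcal{N}_{\Gamma}$ and well-definedness of $x^k$ use $X^k-X^*\in K^*\subset\mathcal{B}\mathbb{X}$ exactly as before; the boundedness of $\{x^k-x^*\}$ and the convergence $x^k\to x^*$ along a subsequence use the injectivity of $\mathcal{B}$ on $\mathcal{B}^{-1}(K^*)$; and $d^2\delta_{\Gamma}(x^k|v^k)(w^k)\to 0$ follows from the maximum-problem formula \eqref{max-prob} rotated by $R$, together with \eqref{SDCTan}, \eqref{SDCNor}, Proposition~\ref{Gamset-prop}(iii)--(iv), and whichever of conditions (A), (B) is in force.

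I do not expect a serious obstacle here: the content of the theorem is essentially bookkeeping on top of Theorem~\ref{Ncond1-ptilt}. The one point that must be handled with care — and exactly the point the discussion preceding the theorem isolates as the reason the argument works for $\widehat{\Upsilon}$ but not for the full set $\Upsilon$ — is that the block-diagonal structure imposed on $Q$ in $\widehat{\Upsilon}$ is equivalent to $R=P^*Q\in\mathbb{O}(X^*)\cap\mathbb{O}(S^*)$, and in particular forces $R\,{\rm BlkDiag}(0,{\rm Diag}(\lambda^{-}(S^*)))R^{\top}=S^*$. This is precisely what makes $v^k=\mathcal{B}^*S^k+\mathcal{A}^*y^*\to\mathcal{B}^*S^*+\mathcal{A}^*y^*=v^*$, whereas a generic $R\in\mathbb{O}(X^*)$ as allowed in $\Upsilon$ would rotate $\lambda^{-}(S^*)$ into a matrix distinct from $S^*$ and destroy the convergence $v^k\to v^*$. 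Once this identification is recorded, no hypotheses beyond those of Theorem~\ref{Ncond1-ptilt} are required, and the equivalence with the $\widehat{\Upsilon}^{*}$-form is immediate from Corollary~\ref{Scorollary2-ptilt}.
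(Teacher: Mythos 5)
Your proposal is correct, and your ``self-contained'' second half is essentially the paper's own proof: the paper likewise takes $w\in{\rm Ker}\,\mathcal{A}\cap\widehat{\Upsilon}$, extracts $R=P^*Q\in\mathbb{O}(X^*)\cap\mathbb{O}(S^*)$, builds exactly the sequences $X^k=R\,{\rm BlkDiag}(\Lambda^{+}(X^*),\frac{1}{k}I_{|\beta^*\backslash\gamma_0^*|},0_{\widetilde{\gamma}^*})R^{\top}$, $S^k=R\,{\rm BlkDiag}(0,-\frac{1}{k}I_{|\gamma_0^*|},\Lambda^{-}(S^*))R^{\top}$, $w^k=w$, $v^k=\mathcal{B}^*S^k+\mathcal{A}^*y^*$, and then declares that Steps 1--3 of Theorem \ref{Ncond1-ptilt} carry over; you also correctly isolate the one point that makes this work, namely that $R\in\mathbb{O}(S^*)$ guarantees $S^k\to S^*$ and hence $v^k\to v^*$, which is precisely what fails for a generic $R\in\mathbb{O}(X^*)$ as allowed in $\Upsilon$. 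Your first route --- the reduction --- is genuinely different from what the paper writes, but it is also valid given the paper's own auxiliary identities: in the affine setting $g'(x^*)=\mathcal{B}$, the sets $\widehat{\Upsilon}^*$ (Corollary \ref{Scorollary2-ptilt}) and $\widetilde{\Upsilon}^*$ (last display of Theorem \ref{Ncond1-ptilt}) are the same set, since both quantify existentially over all $R\in\mathbb{O}(X^*)\cap\mathbb{O}(S^*)$ subject to the identical condition $[R_{\alpha^*}\,R_{\beta^*}\,R_{\gamma^*}]^{\top}(\mathcal{B}w)R_{\gamma^*}=0$, so $\widehat{\Upsilon}=\widehat{\Upsilon}^*=\widetilde{\Upsilon}^*=\widetilde{\Upsilon}$ and the conclusion is literally that of Theorem \ref{Ncond1-ptilt}. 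What the reduction buys is economy and an observation the paper does not make explicit (its discussion preceding the theorem suggests the authors view $\widehat{\Upsilon}$ as strictly between $\widetilde{\Upsilon}$ and $\Upsilon$, whereas the stated simplifications force $\widetilde{\Upsilon}=\widehat{\Upsilon}$ here); what the paper's route buys is independence from Corollary \ref{Scorollary2-ptilt} and the $\widetilde{\Upsilon}=\widetilde{\Upsilon}^*$ identity, since it verifies membership in $\mathcal{W}$ directly from the raw definition of $\widehat{\Upsilon}$. Either way the statement is proved.
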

 \begin{proof}
 By Lemma \ref{Tiltlemma}, it suffices to prove  ${\rm Ker}\mathcal{A}\cap\widehat{\Upsilon}\subset\mathcal{W}$. Pick any $w\in{\rm Ker}\mathcal{A}\cap\widehat{\Upsilon}$. Then, $\mathcal{A}w=0$, and there exist  $\beta_{+}^*,\beta_{0}^*,\beta_{-}^*,\widetilde{\gamma}^*\subset[n]$ and matrices $P^*\!\in\mathbb{O}(X^*)\cap\mathbb{O}(S^*)$ and 
 $Q=\!{\rm BlkDiag}(Q_1,\ldots,Q_{p})$ with $Q_j\!\in\mathbb{O}^{|\alpha_j|}$ for $j\in[p]$ and $Q_p=\!{\rm BlkDiag}(Q_{p}^{1},\ldots,Q_{p}^{s})$ for $Q_{p}^{1}\in\mathbb{O}^{|\alpha_p|}$ and $Q_p^{j}\in\mathbb{O}^{|\gamma_j|}$ for $j=2,\ldots,s$ such that $\beta^*=\beta_{-}^*\cup\beta_{+}^*\cup\beta_0^*\cup\gamma^*_0$ with $\gamma^*_0\!:=\widetilde{\gamma}^*\backslash\gamma^*$, $R:=P^*Q\in\mathbb{O}(X^*)\cap\mathbb{O}(S^*)$, and 
\begin{equation}\label{Wtilde-Q}
 \widetilde{W}:=R^\top (\mathcal{B}w)R=\begin{pmatrix}
  \widetilde{W}_{11}& \widetilde{W}_{12} & \widetilde{W}_{13} & 0_{[\alpha^*\cup\beta_{+}^*]\widetilde{\gamma}^*}\\
  \widetilde{W}_{12}^{\top}&\widetilde{W}_{22} &0  &  0\\
  \widetilde{W}_{13}^{\top}& 0 & 0 & 0\\
  0_{\widetilde{\gamma}^*[\alpha^*\cup\beta_{+}^*]} & 0 & 0 & 0
  \end{pmatrix}
\end{equation}
 with $\widetilde{W}_{11}\in\mathbb{S}^{|\alpha^*\cup\beta_{+}^*|},\widetilde{W}_{12}\in \mathbb{R}^{|\alpha^*\cup\beta_{+}^*|\times |\beta_{0}^*|},\widetilde{W}_{13}\in \mathbb{R}^{|\alpha^*\cup\beta_{+}^*|\times |\beta_{-}^*|}$ and $\widetilde{W}_{22}\in\mathbb{S}^{|\beta_{0}^*|}_+$. From $R\in\mathbb{O}(X^*)\cap\mathbb{O}(S^*)$, the matrices $X^*$ and $S^*$ have the following spectral decompositions
 \begin{equation}\label{XS-starDecomp}
   X^*\!=R\begin{pmatrix}
   \Lambda^{+}(X^*) & 0\\
   0 & 0\\
  \end{pmatrix}R^{\top}\ {\rm and}\ 
  S^*\!=R\begin{pmatrix}
   0 & 0 \\
   0 & \Lambda^{-}(S^*)
 \end{pmatrix}R^{\top}
 \end{equation}
 with $\Lambda^{+}(X^*)={\rm Diag}(\lambda^{+}(X^*))$ and $\Lambda^{-}(S^*)={\rm Diag}(\lambda^{-}(S^*))$. For each $k\in\mathbb{N}$, let 
 $x^k$ be such that $\mathcal{B}(x^k\!-\!x^*)=X^k\!-\!X^*$ with $X^k\!=R{\rm BlkDiag}(\Lambda^{+}(X^*),\frac{1}{k}I_{|\beta^*\backslash\gamma_0^*|},0_{\widetilde{\gamma}^*})R^{\top}$, $w^k\!=w$ and $v^k\!=\mathcal{B}^*S^k+\mathcal{A}^*y^*$ with $S^k\!=R{\rm BlkDiag}(0_{\alpha^*\cup (\beta^*\setminus\gamma^*_0)},-\frac{1}{k}I_{|\gamma^*_0|},{\Lambda^{-}(S^*)})R^{\top}$. By using the same arguments as those for Theorem \ref{Ncond1-ptilt}, there exist an index set $\mathcal{K}\subset\mathbb{N}$ and a sequence $\{((x^k,v^k),w^k)\}_{k\in\mathcal{K}}\subset {\rm gph}\mathcal{N}_{\Gamma}\times\mathbb{X}$ with $\lim_{\mathcal{K}\ni k\to\infty}(x^k,v^k,w^k)=(x^*,v^*,w)$ such that $\lim_{\mathcal{K}\ni k\to \infty} d^2\delta_{\Gamma}(x^k|v^k)(w^k)=0$. Then, $w\in\mathcal{W}$, and the arbitrariness of $w\in{\rm Ker}\mathcal{A}\cap\widehat{\Upsilon}$ implies that the desired inclusion holds. 
 \end{proof}

 Theorem \ref{Ncond2-ptilt} along with Theorem \ref{Scond2-ptilt} provides a point-based sufficient and necessary characterization for the tilt stability of problem \eqref{pNSDP} with $g$ being the affine mapping, which is stated as follows.
 \begin{corollary}\label{SNcond0-ptilt}
 Consider a local optimal solution $x^*$ of problem \eqref{pNSDP} with $\nabla^2\varphi(x)\succeq 0$ on an open neighborhood $\mathcal{N}^*\!\subset\mathcal{O}$ of $x^*$ and $g(x)=\mathcal{B}(x)-B$. Let $v^*\!:=\!-\nabla\varphi(x^*)$ and $X^*\!:=g(x^*)$. Suppose that $\mathcal{G}$ is metrically regular at $(x^*,(0,0))$, and that $\mathcal{M}(x^*,v^*)\!=(\mathcal{A}^*)^{-1}(v^*\!-\nabla\!g(x^*)S^*)\times\{S^*\}$ for some $S^*\in\mathcal{N}_{\mathbb{S}_{+}^n}(X^*)$. Let $\alpha^*,\gamma^*$ and $\beta^*$ be defined by \eqref{alp-gamj}-\eqref{alp-beta} with $(X,S)=(X^*,S^*)$. Then, under the conditions of Theorem \ref{Ncond1-ptilt}, the solution $x^*$ is tilt-stable if and only if ${\rm Ker}\nabla^2\varphi(X^*)\cap\widehat{\Upsilon}=\{0\}$.
 \end{corollary}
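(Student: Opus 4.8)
The plan is to obtain Corollary \ref{SNcond0-ptilt} as the exact conjunction of the sufficiency established in Theorem \ref{Scond2-ptilt} and the necessity established in Theorem \ref{Ncond2-ptilt}, since all hypotheses needed by those two theorems are assumed here: the metric regularity of $\mathcal{G}$ at $(x^*,(0,0))$, the positive semidefiniteness of $\nabla^2\varphi$ on a neighborhood of $x^*$, the affine form $g(x)=\mathcal{B}x-B$, the multiplier restriction $\mathcal{M}(x^*,v^*)=(\mathcal{A}^*)^{-1}(v^*-\nabla g(x^*)S^*)\times\{S^*\}$, and the conditions of Theorem \ref{Ncond1-ptilt}. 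Throughout I would first invoke Corollary \ref{Scorollary2-ptilt} to replace $\widehat{\Upsilon}$ by the equivalent closed form $\widehat{\Upsilon}^{*}$, so that membership $w\in\widehat{\Upsilon}$ is encoded by the single block condition $[R_{\alpha^*}\ R_{\beta^*}\ R_{\gamma^*}]^{\top}(\mathcal{B}w)R_{\gamma^*}=0$ for some $R\in\mathbb{O}(X^*)\cap\mathbb{O}(S^*)$.

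For the sufficiency direction I would argue directly and with no extra work. Assuming ${\rm Ker}\nabla^2\varphi(x^*)\cap\widehat{\Upsilon}=\{0\}$, the trivial inclusion ${\rm Ker}\nabla^2\varphi(x^*)\cap{\rm Ker}\mathcal{A}\cap\widehat{\Upsilon}\subset{\rm Ker}\nabla^2\varphi(x^*)\cap\widehat{\Upsilon}$ forces the smaller set to be $\{0\}$ as well, which is precisely the hypothesis of Theorem \ref{Scond2-ptilt}. Since the remaining assumptions of that theorem are in force, $x^*$ is tilt-stable. This half is immediate, because deleting the factor ${\rm Ker}\mathcal{A}$ only strengthens the triviality condition.

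For the necessity direction I would start from tilt stability of $x^*$ and apply Theorem \ref{Ncond2-ptilt}, which under the present hypotheses delivers ${\rm Ker}\nabla^2\varphi(x^*)\cap{\rm Ker}\mathcal{A}\cap\widehat{\Upsilon}=\{0\}$. The substantive task is to upgrade this to the stated conclusion ${\rm Ker}\nabla^2\varphi(x^*)\cap\widehat{\Upsilon}=\{0\}$, equivalently to verify that the factor ${\rm Ker}\mathcal{A}$ is redundant on ${\rm Ker}\nabla^2\varphi(x^*)\cap\widehat{\Upsilon}$. My plan is to show ${\rm Ker}\nabla^2\varphi(x^*)\cap\widehat{\Upsilon}\subset{\rm Ker}\mathcal{A}$ by using the closed form $\widehat{\Upsilon}=\widehat{\Upsilon}^{*}$ together with the simultaneous decomposition \eqref{XS-starDecomp}: for $w\in\widehat{\Upsilon}^{*}$ the block pattern $(\mathcal{B}w)R_{\gamma^*}=0$ constrains $\mathcal{B}w$ to the span of the active directions, and I would then combine this with the multiplier restriction and with condition (A) or (B) of Theorem \ref{Ncond1-ptilt}, which tie the ranges of $\mathcal{A}^{*}$ and $\mathcal{B}^{*}$ on ${\rm Sp}(\mathcal{N}_{\mathbb{S}_{+}^n}(X^*))$, to conclude $\mathcal{A}w=0$ for such $w$.

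The step I expect to be the main obstacle is exactly this last reconciliation. The two theorems are phrased with the extra intersection against ${\rm Ker}\mathcal{A}$, whereas $\widehat{\Upsilon}$ is defined solely through the action of $\mathcal{B}$ and carries no data about $\mathcal{A}$; hence showing that ${\rm Ker}\mathcal{A}$ is inactive on the relevant critical directions is the delicate point, and it is where the affine form of $g$, the uniqueness of the $S$-component of the multiplier, and conditions (A)/(B) of Theorem \ref{Ncond1-ptilt} must all be combined. Once that is settled, the remaining bookkeeping, namely checking that the hypotheses of both invoked theorems hold and that $\widehat{\Upsilon}=\widehat{\Upsilon}^{*}$, is routine.
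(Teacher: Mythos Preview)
Your plan is essentially the paper's own proof: it simply announces that the corollary is the conjunction of Theorem~\ref{Scond2-ptilt} and Theorem~\ref{Ncond2-ptilt}, with no extra argument. Your sufficiency half is exactly right and matches this.

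However, the obstacle you flag in the necessity direction is real and cannot be overcome in the way you sketch. Both Theorem~\ref{Scond2-ptilt} and Theorem~\ref{Ncond2-ptilt} are stated with the intersection against ${\rm Ker}\,\mathcal{A}$, so what their combination actually yields is the equivalence with ${\rm Ker}\nabla^2\varphi(x^*)\cap{\rm Ker}\,\mathcal{A}\cap\widehat{\Upsilon}=\{0\}$. The omission of ${\rm Ker}\,\mathcal{A}$ in the corollary's displayed condition is a typographical slip in the paper, not an additional claim to be proved; compare Proposition~\ref{pSDP-prop1}(ii), which specializes this corollary to $\varphi$ linear (so ${\rm Ker}\nabla^2\varphi(x^*)=\mathbb{X}$) and states the condition as ${\rm Ker}\,\mathcal{A}\cap\widehat{\Upsilon}^{*}=\{0\}$, not $\widehat{\Upsilon}^{*}=\{0\}$.

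Your proposed fix, namely showing ${\rm Ker}\nabla^2\varphi(x^*)\cap\widehat{\Upsilon}\subset{\rm Ker}\,\mathcal{A}$, fails in general: take $\varphi$ linear, so the left side is all of $\widehat{\Upsilon}^{*}$, and pick any $w$ with $\mathcal{B}w=0$ but $\mathcal{A}w\neq 0$. Then $w\in\widehat{\Upsilon}^{*}$ trivially (the block condition on $\mathcal{B}w$ is vacuous) while $w\notin{\rm Ker}\,\mathcal{A}$. Conditions~(A) and~(B) relate $\mathcal{B}^*$ on ${\rm Sp}(\mathcal{N}_{\mathbb{S}_{+}^n}(X^*))$ to ${\rm Im}\,\mathcal{A}^*$ and give you no control over $\mathcal{A}w$ from the structure of $\mathcal{B}w$. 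So do not try to close this gap; instead read the corollary with ${\rm Ker}\,\mathcal{A}$ inserted, and then your two-line combination of Theorems~\ref{Scond2-ptilt} and~\ref{Ncond2-ptilt} is exactly the paper's proof.
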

  
When $\mathbb{X}=\mathbb{S}^n$, $\varphi$ is a twice continuously differentiable convex function, $\mathcal{A}=0$ and $g$ is an identity mapping, problem \eqref{pNSDP} becomes the following one considered in \cite[Example 3.4]{Nghia24}: 
\begin{equation}\label{special-pSDP1}
 \min_{X\in\mathbb{S}^n}\,\varphi(X)+\delta_{\mathbb{S}_{+}^n}(X).
\end{equation}
As mentioned in the introduction, the constraint nondegeneracy automatically holds for \eqref{special-pSDP1}. By Remark \ref{remark3-Ncond} (a), the assumptions of Theorem \ref{Ncond2-ptilt} hold, so the following sufficient and necessary characterization holds for the tilt stability of \eqref{special-pSDP1}. 
\begin{corollary}\label{SNcond1-ptilt}
 Consider a local optimal solution $X^*$ of problem \eqref{special-pSDP1} with $S^*=-\nabla\varphi(X^*)$. Let $\alpha^*,\gamma^*$ and $\beta^*$ be defined by \eqref{alp-gamj}-\eqref{alp-beta} with $(X,S)=(X^*,S^*)$. Then $x^*$ is a tilt-stable solution of \eqref{special-pSDP1} if and only if ${\rm Ker}\nabla^2\varphi(X^*)\cap\widehat{\Upsilon}=\{0\}$ or ${\rm Ker}\nabla^2\varphi(X^*)\cap\widehat{\Upsilon}^{*}=\{0\}$.
 \end{corollary}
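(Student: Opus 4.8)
The plan is to obtain Corollary \ref{SNcond1-ptilt} as a direct specialization of Corollary \ref{SNcond0-ptilt} (whose ``if'' part comes from Theorem \ref{Scond2-ptilt} and whose ``only if'' part comes from Theorem \ref{Ncond2-ptilt}) to the setting $\mathbb{X}=\mathbb{S}^n$, $\mathcal{A}=0$, $B=0$ and $g$ the identity mapping, so that $\mathcal{B}$ is the identity operator on $\mathbb{S}^n$ and $\Gamma=\mathbb{S}_{+}^n$. Accordingly, the entire task reduces to verifying that every hypothesis occurring in Corollary \ref{SNcond0-ptilt}, hence in Theorems \ref{Scond2-ptilt}, \ref{Ncond1-ptilt} and \ref{Ncond2-ptilt}, is automatically met in this case.

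First I would record the structural simplifications. Since $\varphi$ is convex, $\nabla^2\varphi(X)\succeq 0$ holds on an open neighborhood of $X^*$, which supplies the Hessian assumption. Since $g'(X^*)$ is the identity, it is onto, so condition \eqref{temp-regular} (equivalently the implication \eqref{imply1}) holds trivially and $\mathcal{G}$ is metrically regular at $(X^*,(0,0))$. Because $\mathcal{A}=0$ (so $m=0$ and ${\rm Im}\,\mathcal{A}^*=\{0\}$), the multifunction $\mathcal{M}$ of \eqref{Mmap} satisfies $\mathcal{M}(X^*,v^*)=\{S\in\mathcal{N}_{\mathbb{S}_{+}^n}(X^*)\,|\,S=v^*\}$; the first-order optimality condition $0\in\nabla\varphi(X^*)+\mathcal{N}_{\mathbb{S}_{+}^n}(X^*)$ at the local minimizer $X^*$ gives $v^*=-\nabla\varphi(X^*)\in\mathcal{N}_{\mathbb{S}_{+}^n}(X^*)$, whence $\mathcal{M}(X^*,v^*)=\{S^*\}$ with $S^*=-\nabla\varphi(X^*)$. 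In particular $\mathcal{M}(X^*,v^*)=(\mathcal{A}^*)^{-1}(v^*-\nabla g(x^*)S^*)\times\{S^*\}$, the restriction required by Theorem \ref{Scond2-ptilt} and Corollary \ref{SNcond0-ptilt}.

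Next I would check the hypotheses of Theorem \ref{Ncond1-ptilt}. Since $\mathcal{B}$ is the identity, $\mathcal{B}\mathbb{X}=\mathbb{S}^n\supset K^*$ and $\mathcal{B}$ is injective on $\mathbb{S}^n$, hence on $\mathcal{B}^{-1}(K^*)$; moreover, only one of conditions (A), (B) is needed. For (A), using $\mathcal{B}^*=\mathcal{B}$ and ${\rm Im}\,\mathcal{A}^*=\{0\}$ we get $(\mathcal{B}^*)^{-1}({\rm Im}\,\mathcal{A}^*)={\rm Ker}\,\mathcal{B}^*=\{0\}$, so ${\rm Sp}(\mathcal{N}_{\mathbb{S}_{+}^n}(X^*))\cap(\mathcal{B}^*)^{-1}({\rm Im}\,\mathcal{A}^*)=\{0\}$; this is precisely the reduced form of (A) noted in Remark \ref{remark3-Ncond}(b) and matches the fact that \eqref{special-pSDP1} is constraint nondegenerate at every feasible point (see Remark \ref{remark3-Ncond}(a)). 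Since $\mathcal{A}=0$ gives ${\rm Ker}\,\mathcal{A}=\mathbb{S}^n$, the conclusion of Corollary \ref{SNcond0-ptilt} reads ${\rm Ker}\,\nabla^2\varphi(X^*)\cap\widehat{\Upsilon}=\{0\}$, and the equivalent formulation with $\widehat{\Upsilon}^{*}$ follows from the identity $\widehat{\Upsilon}=\widehat{\Upsilon}^{*}$ of Corollary \ref{Scorollary2-ptilt}. Applying Corollary \ref{SNcond0-ptilt} then yields the claim. There is essentially no hard analytic step here; the only care needed is the bookkeeping of the degenerate data $\mathcal{A}=0$, $m=0$ --- ensuring that $(\mathcal{A}^*)^{-1}(\cdot)$ and ${\rm Im}\,\mathcal{A}^*$ collapse to $\{0\}$ so that the multiplier set is a genuine singleton and conditions (A)--(B) become vacuous --- together with the observation that the first-order condition for \eqref{special-pSDP1} produces exactly the multiplier $S^*=-\nabla\varphi(X^*)$ appearing in the statement.
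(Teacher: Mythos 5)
Your proposal is correct and follows essentially the same route as the paper, which obtains this corollary by noting that constraint nondegeneracy (hence condition (A)) holds automatically for \eqref{special-pSDP1} and then invoking Theorems \ref{Scond2-ptilt} and \ref{Ncond2-ptilt} (equivalently Corollary \ref{SNcond0-ptilt}) with $\mathcal{A}=0$, $\mathcal{B}=I$; your verification that the multiplier set collapses to the singleton $\{S^*\}$ with $S^*=-\nabla\varphi(X^*)$ and that the remaining hypotheses are vacuous is exactly the bookkeeping the paper leaves implicit. No gaps.
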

 
 By comparing $\widehat{\Upsilon}^{*}$ with the set $\mathfrak{T}_{\mathbb{S}}$ in \cite[Example 3.15]{Nghia24}, it is easy to check $\widehat{\Upsilon}^{*}=\mathfrak{T}_{\mathbb{S}}$. Thus, our sufficient and necessary characterization in Corollary \ref{SNcond0-ptilt} for the tilt stability of problem \eqref{pNSDP} with an affine $g$ extends the one obtained in \cite{Nghia24} for a special convex SDP to a class of nonconvex SDPs.    
\subsection{Application to linear SDPs}\label{sec3.3}
 Consider the space $\mathbb{X}=\mathbb{S}^n$. When $\varphi(x)=\langle C,x\rangle$ for a given $C\in\mathbb{S}^n$, and $g(x)\equiv x$ for $x\in\mathbb{X}$, problem \eqref{pNSDP} reduces to the following standard linear SDP
 \begin{equation}\label{pSDP1}
  \min_{X\in\mathbb{S}^n}\big\{\langle C,X\rangle\ \ {\rm s.t.}\ \mathcal{A}X=b,\,X\in\mathbb{S}_{+}^n\big\},
 \end{equation}
 whose dual problem takes the following form 
 \begin{equation}\label{dSDP1}
  \max_{y\in\mathbb{R}^m,S\in\mathbb{S}^n}\big\{b^{\top}y\ \ {\rm s.t.}\ \mathcal{A}^*y+S=C,\,S\in\mathbb{S}_{+}^n\big\}. 
 \end{equation}
 Invoking the conclusions of Sections \ref{sec3.2}-\ref{sec3.3} yields the following result for tilt stability of \eqref{pSDP1}. 
\begin{proposition}\label{pSDP-prop1}
 Consider an optimal solution $X^*$ of the linear SDP \eqref{pSDP1}. Suppose that 
 \begin{equation*}
 \mathcal{G}(X)=\begin{pmatrix}
  \mathcal{A}X-b\\ X
 \end{pmatrix}-\begin{pmatrix}
   \{0\}^m\\ \mathbb{S}_{+}^n
   \end{pmatrix}\quad\ \forall X\in\mathbb{S}^n
 \end{equation*}
 is metrically regular at $(X^*,(0,0))$. Pick any $(y^*,S^*)\in\mathcal{M}(X^*,-C)$. Let $\alpha^*,\gamma^*$ and $\beta^*$ be defined by \eqref{alp-gamj}-\eqref{alp-beta} with $(X,S)=(X^*,S^*)$. The following two assertions hold true.
 \begin{itemize}
 \item [(i)] If $(y^*,S^*)\in\mathop{\arg\min}\limits_{(y,S)\in\mathcal{M}(X^*,-C)}{\rm rank}(S)$ and ${\rm Ker}\mathcal{A}\cap[\![C]\!]^{\perp}\cap\Upsilon^*\!=\{0\}$, the solution $X^*$ is tilt-stable. If ${\rm Sp}(\mathcal{N}_{\mathbb{S}_{+}^n}(X^*))\cap {\rm Im}\mathcal{A}^*\!=\!\{0\}$ or ${\rm Sp}(\mathcal{N}_{\mathbb{S}_{+}^n}(X^*))\subset\!\mathcal{A}^*\mathbb{R}^m$, the tilt stability of $X^*$ implies ${\rm Ker}\mathcal{A}\cap\widetilde{\Upsilon}^*=\{0\}$. 

 \item [(ii)] If $\mathcal{M}(X^*,-C)\!=(\mathcal{A}^*)^{-1}(-C\!-S^*)\times\{S^*\}$ and ${\rm Ker}\mathcal{A}\cap\widehat{\Upsilon}^{*}=\{0\}$, then $X^*$ is tilt-stable. If ${\rm Sp}(\mathcal{N}_{\mathbb{S}_{+}^n}(X^*))\cap {\rm Im}\mathcal{A}^*=\!\{0\}$ or ${\rm Sp}(\mathcal{N}_{\mathbb{S}_{+}^n}(X^*))\subset\!\mathcal{A}^*\mathbb{R}^m$, the tilt stability of $X^*$ means ${\rm Ker}\mathcal{A}\cap\widehat{\Upsilon}^{*}=\{0\}$.
 \end{itemize} 
 \end{proposition}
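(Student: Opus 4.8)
The plan is to derive Proposition~\ref{pSDP-prop1} as a direct specialization of Theorems~\ref{Scond1-ptilt}, \ref{Scond2-ptilt}, \ref{Ncond1-ptilt} and \ref{Ncond2-ptilt} to the data $\mathbb{X}=\mathbb{S}^n$, $\varphi(X)=\langle C,X\rangle$ and $g(X)\equiv X$. First I would record the reductions forced by this data. The function $\varphi$ is linear, hence twice continuously differentiable and convex with $\nabla\varphi\equiv C$ and $\nabla^2\varphi\equiv 0$; thus the hypothesis $\nabla^2\varphi(X)\succeq 0$ holds on all of $\mathcal{O}$ and ${\rm Ker}\,\nabla^2\varphi(X^*)=\mathbb{S}^n$, so that every intersection ${\rm Ker}\,\nabla^2\varphi(X^*)\cap{\rm Ker}\,\mathcal{A}\cap\cdots$ occurring in those theorems collapses to ${\rm Ker}\,\mathcal{A}\cap\cdots$. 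The mapping $g(X)=X$ is linear, hence $\mathbb{S}_{-}^n$-convex, so the standing $\mathbb{S}_{-}^n$-convexity assumption is met; writing $g(X)=\mathcal{B}X-B$, both $\mathcal{B}$ and $\mathcal{B}^*$ equal the identity operator on $\mathbb{S}^n$ and $B=0$, whence $g'(X^*)=\nabla g(X^*)=\mathcal{B}$, $v^*=-\nabla\varphi(X^*)=-C$, $X^*=g(X^*)=X^*$, $[\![v^*]\!]^{\perp}=[\![C]\!]^{\perp}$, and the multifunctions $\mathcal{G},\mathcal{M}$ from \eqref{Gmap}--\eqref{Mmap} are exactly those written in the statement. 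In particular $g'(X^*)w=w$, so the sets $\Upsilon^*,\widehat{\Upsilon}^{*},\widetilde{\Upsilon}^{*}$ of Corollaries~\ref{Scorollary1-ptilt}, \ref{Scorollary2-ptilt} and Theorem~\ref{Ncond1-ptilt} become precisely the sets of the same name in Proposition~\ref{pSDP-prop1}.

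For the sufficiency in part (i), I would apply Theorem~\ref{Scond1-ptilt} with the minimum-rank multiplier $(y^*,S^*)$: since $\mathcal{G}$ is metrically regular at $(X^*,(0,0))$, that theorem yields tilt stability of $X^*$ as soon as ${\rm Ker}\,\nabla^2\varphi(X^*)\cap{\rm Ker}\,\mathcal{A}\cap[\![v^*]\!]^{\perp}\cap\Upsilon=\{0\}$, which, using the reductions above and $\Upsilon=\Upsilon^*$ (Corollary~\ref{Scorollary1-ptilt}), is exactly the condition ${\rm Ker}\,\mathcal{A}\cap[\![C]\!]^{\perp}\cap\Upsilon^*=\{0\}$. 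For the necessity in part (i), I would check that the structural hypotheses of Theorem~\ref{Ncond1-ptilt} either hold automatically or reduce to the stated ones when $\mathcal{B}$ is the identity: $K^*:=\mathcal{T}_{\mathbb{S}_{+}^n}(X^*)\cap{\rm Sp}(\mathcal{N}_{\mathbb{S}_{+}^n}(-S^*))\subset\mathbb{S}^n=\mathcal{B}\mathbb{X}$ and the injectivity of $\mathcal{B}$ on $\mathcal{B}^{-1}(K^*)$ are trivial, while condition (A) becomes ${\rm Sp}(\mathcal{N}_{\mathbb{S}_{+}^n}(X^*))\cap{\rm Im}\,\mathcal{A}^*=\{0\}$ and condition (B) becomes ${\rm Sp}(\mathcal{N}_{\mathbb{S}_{+}^n}(X^*))\subset\mathcal{A}^*\mathbb{R}^m$, since $\mathcal{B}^*$ is the identity. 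Under either of these, Theorem~\ref{Ncond1-ptilt} gives that tilt stability of $X^*$ forces ${\rm Ker}\,\nabla^2\varphi(X^*)\cap{\rm Ker}\,\mathcal{A}\cap\widetilde{\Upsilon}=\{0\}$, i.e.\ ${\rm Ker}\,\mathcal{A}\cap\widetilde{\Upsilon}^{*}=\{0\}$ using $\widetilde{\Upsilon}=\widetilde{\Upsilon}^{*}$.

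For part (ii), under the restriction $\mathcal{M}(X^*,-C)=(\mathcal{A}^*)^{-1}(-C-S^*)\times\{S^*\}$, which is exactly the restriction of Theorem~\ref{Scond2-ptilt} specialized to $v^*=-C$ and $\nabla g(X^*)$ being the identity operator, Theorem~\ref{Scond2-ptilt} gives tilt stability of $X^*$ whenever ${\rm Ker}\,\mathcal{A}\cap\widehat{\Upsilon}=\{0\}$, equivalently ${\rm Ker}\,\mathcal{A}\cap\widehat{\Upsilon}^{*}=\{0\}$ by Corollary~\ref{Scorollary2-ptilt}; and since the hypotheses of Theorem~\ref{Ncond1-ptilt} have been verified above under either of the two conditions (which are exactly (A) or (B) in this setting), Theorem~\ref{Ncond2-ptilt} shows conversely that tilt stability implies ${\rm Ker}\,\mathcal{A}\cap\widehat{\Upsilon}^{*}=\{0\}$ --- alternatively one may simply invoke Corollary~\ref{SNcond0-ptilt}. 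I do not expect a genuine obstacle: the argument is essentially bookkeeping, and the only point that requires care is confirming that the operator-level conditions (the inclusion $K^*\subset\mathcal{B}\mathbb{X}$, the injectivity of $\mathcal{B}$ on $\mathcal{B}^{-1}(K^*)$, and (A)/(B)) collapse correctly when $\mathcal{B}=\mathcal{B}^*$ is the identity, together with the identifications $\Upsilon=\Upsilon^*$, $\widehat{\Upsilon}=\widehat{\Upsilon}^{*}$, $\widetilde{\Upsilon}=\widetilde{\Upsilon}^{*}$ that come from $g'(X^*)$ being the identity operator.
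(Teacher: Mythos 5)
Your proposal is correct and is exactly the argument the paper intends: Proposition \ref{pSDP-prop1} is stated as a direct specialization of Theorems \ref{Scond1-ptilt}, \ref{Scond2-ptilt}, \ref{Ncond1-ptilt} and \ref{Ncond2-ptilt} (together with Corollaries \ref{Scorollary1-ptilt} and \ref{Scorollary2-ptilt}) to $\mathbb{X}=\mathbb{S}^n$, $\varphi=\langle C,\cdot\rangle$, $g={\rm id}$, and your bookkeeping of the reductions ($\nabla^2\varphi\equiv 0$, $v^*=-C$, $\mathcal{B}=\mathcal{B}^*={\rm id}$ making $K^*\subset\mathcal{B}\mathbb{X}$ and the injectivity hypothesis automatic, and (A)/(B) collapsing to the two stated conditions) is precisely what the paper leaves implicit.
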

 \begin{remark}
 {\bf(a)} Checking ${\rm Ker}\mathcal{A}\cap[\![C]\!]^{\perp}\cap\Upsilon^*\!=\{0\}$ or ${\rm Ker}\,\mathcal{A}\cap\widehat{\Upsilon}^*=\{0\}$ is equivalent to verifying if the optimal value of a reverse convex problem is zero; for example, checking ${\rm Ker}\,\mathcal{A}\cap\Upsilon^*=\{0\}$ requires verifying if the optimal value of the following problem is zero: 
 \begin{equation*}
 \max_{A\in\mathbb{S}^{|\alpha^*|},D\in \mathbb{S}_{+}^{|\beta^*|}\atop E\in \mathbb{R}^{|\alpha^*|\times |\beta^*|}}\!\left\{\|A\|_F^2\!+\!\|E\|^2_F\!+\!\|D\|^2_F\ \ {\rm s.t.}\ \mathcal{A}W=0,W=\!\begin{pmatrix}
  A & E & 0\\
 E^{\top} & D & 0\\
  0 &0 & 0
 \end{pmatrix}\!\right\}.
\end{equation*}

 \noindent
 {\bf(b)} The condition ${\rm Ker}\mathcal{A}\cap\widehat{\Upsilon}^{*}=\{0\}$ is the same as the implication in \cite[Eq. (52)]{ChanSun08} which, under the uniqueness of the multiplier set, is equivalent to the SSOSC of \cite[Definition 13]{ChanSun08} by \cite[Lemma 14]{ChanSun08} and the constraint nondegeneracy at $(y^*,S^*)$ to the dual SDP \eqref{dSDP1} by \cite[Proposition 15]{ChanSun08}. Then,  Proposition \ref{pSDP-prop1} (ii) shows that under the uniqueness of the multiplier set, the tilt-stability of the linear SDP \eqref{pSDP1} is weaker than the SSOSC of \cite[Definition 13]{ChanSun08} as well as the constraint nondegeneracy at $(y^*,S^*)$ to \eqref{dSDP1}. When the multiplier set is not a singleton, there is no implication relation between the sufficient characterization ${\rm Ker}\mathcal{A}\cap[\![C]\!]^{\perp}\cap\Upsilon^*=\{0\}$ of Proposition \ref{pSDP-prop1} (i) and the constraint nondegeneracy at $(y^*,S^*)$ to \eqref{dSDP1} (i.e., ${\rm Ker}\mathcal{A}\cap{\rm Sp}(\mathcal{N}_{\mathbb{S}_{+}^n}(-S^*))=\{0\}$) because $P^*Q\in\mathbb{O}(X^*)\cap\mathbb{O}(S^*)$ does not necessarily hold, but the necessary characterization ${\rm Ker}\mathcal{A}\cap\widetilde{\Upsilon}^*=\{0\}$ is weaker than ${\rm Ker}\mathcal{A}\cap{\rm Sp}(\mathcal{N}_{\mathbb{S}_{+}^n}(-S^*))\!=\!\{0\}$.
\end{remark}  

 Next we consider that $\mathcal{A}=0,b=0$, and $\varphi(x)=\langle d,x\rangle$ and $g(x)=\mathcal{B}x-B$ for $x\in\mathbb{X}$, where $d\in\mathbb{X}$ and $B\in\mathbb{S}^n$ are the given data. Now problem \eqref{pNSDP} is specified as the following one 
 \begin{equation}\label{pSDP2}
  \min_{x\in\mathbb{X}}\big\{\langle d,x\rangle\ \ {\rm s.t.}\ \ \mathcal{B}x-B\in\mathbb{S}_{+}^n\big\},
 \end{equation}
 which corresponds to the dual SDP \eqref{dSDP1} with $\mathbb{X}=\mathbb{R}^m,b=-d,\mathcal{A}^*=-\mathcal{B}$ and $B=-C$. From Sections \ref{sec3.2}-\ref{sec3.3}, we have the following conclusion for the tilt stability of \eqref{pSDP2}. 
\begin{proposition}\label{pSDP-prop2}
 Consider an optimal solution $x^*$ of the linear SDP \eqref{pSDP2}. Suppose that 
 \begin{equation}\label{MHmap1}
 \mathcal{H}(x):=\mathcal{B}x-B-\mathbb{S}_{+}^n\quad{\rm for}\ x\in\mathbb{X}
 \end{equation}
 is metrically regular at $(x^*,0)$. Pick any $S^*\!\in\mathcal{M}(x^*,-d)$. Let $X^*\!=\mathcal{B}x^*\!-B$, and let $\alpha^*,\gamma^*$ and $\beta^*$ be the index sets defined by \eqref{alp-gamj}-\eqref{alp-beta} with $(X,S)=(X^*,S^*)$, and let $K^*:=\mathcal{T}_{\mathbb{S}_{+}^n}(X^*)\cap{\rm Sp}(\mathcal{N}_{\mathbb{S}_{+}^n}(-S^*))$. The following assertions hold true.
 \begin{itemize}
  \item [(i)] If $S^*\in\mathop{\arg\min}\limits_{S\in\mathcal{M}(x^*,-d)}{\rm rank}(S)$ and $[\![d]\!]^{\perp}\cap\Upsilon^*=\{0\}$, the solution $x^*$ is tilt-stable. If $K^*\subset\mathcal{B}\mathbb{X}$, $\mathcal{B}$ is injective on $\mathcal{B}^{-1}(K^*)$ and ${\rm Ker}\,\mathcal{B}^*\cap{\rm Sp}(\mathcal{N}_{\mathbb{S}_{+}^n}(X^*))=\{0\}$, the tilt stability of $x^*$ implies $[\![d]\!]^{\perp}\cap\widetilde{\Upsilon}^*=\{0\}$.

  \item [(ii)] If $\mathcal{M}(x^*,-d)=\{S^*\}$ and $\widehat{\Upsilon}^{*}\!=\!\{0\}$, the solution $x^*$ is tilt-stable. If $K^*\subset\mathcal{B}\mathbb{X}$, $\mathcal{B}$ is injective on the set $\mathcal{B}^{-1}(K^*)$ and ${\rm Ker}\,\mathcal{B}^*\cap{\rm Sp}(\mathcal{N}_{\mathbb{S}_{+}^n}(X^*))=\{0\}$, the tilt stability of $x^*$ implies $\widehat{\Upsilon}^{*}=\{0\}$.
 \end{itemize} 
 \end{proposition}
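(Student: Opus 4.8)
The plan is to read off \eqref{pSDP2} as the instance of \eqref{pNSDP} with $\mathcal{A}=0$, $b=0$, $\varphi(x)=\langle d,x\rangle$ and $g(x)=\mathcal{B}x-B$, and then to invoke Theorems \ref{Scond1-ptilt}, \ref{Scond2-ptilt}, \ref{Ncond1-ptilt} and \ref{Ncond2-ptilt} together with Corollaries \ref{Scorollary1-ptilt} and \ref{Scorollary2-ptilt}. The first step is to record how the objects in those results specialize. Since $\varphi$ is affine, $\nabla\varphi\equiv d$, so $v^*=-d$, and $\nabla^2\varphi\equiv 0$ is positive semidefinite everywhere; in particular the hypothesis ``$\nabla^2\varphi(x)\succeq 0$ near $x^*$'' of the quoted theorems holds automatically, and ${\rm Ker}\,\nabla^2\varphi(x^*)=\mathbb{X}$. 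From $\mathcal{A}=0$ and $b=0$ we get ${\rm Ker}\,\mathcal{A}=\mathbb{X}$, ${\rm Im}\,\mathcal{A}^*=\{0\}$ and $\mathcal{A}^{-1}(b)=\mathbb{X}$, so $\Gamma=g^{-1}(\mathbb{S}_{+}^n)$, i.e.\ \eqref{pSDP2} is \eqref{pNSDP} without the equality constraint and the mapping $\mathcal{G}$ in \eqref{Gmap} coincides with $\mathcal{H}$ in \eqref{MHmap1}. Finally $g'(x^*)=\mathcal{B}$, $\nabla g(x^*)=\mathcal{B}^*$, $g(x^*)=X^*$, and by \eqref{Mmap} the multiplier mapping reduces to $\mathcal{M}(x^*,-d)=\{S\in\mathcal{N}_{\mathbb{S}_{+}^n}(X^*)\,|\,\mathcal{B}^*S=-d\}$, which is nonempty by the local optimality of $x^*$ and Proposition \ref{Gamset-prop}(i) (whose subregularity hypothesis follows from the assumed metric regularity of $\mathcal{H}$ at $(x^*,0)$); thus the chosen $S^*$ satisfies $S^*\in\mathcal{N}_{\mathbb{S}_{+}^n}(X^*)$ with $\mathcal{B}^*S^*=-d$.

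Next I would verify that every hypothesis of the four quoted theorems is present. The metric-regularity requirement on $\mathcal{G}$ at $(x^*,(0,0))$ is exactly the assumed metric regularity of $\mathcal{H}$ at $(x^*,0)$. Since ${\rm Im}\,\mathcal{A}^*=\{0\}$, one has $(\mathcal{B}^*)^{-1}({\rm Im}\,\mathcal{A}^*)={\rm Ker}\,\mathcal{B}^*$, so condition (A) of Theorem \ref{Ncond1-ptilt} is precisely the assumed ${\rm Ker}\,\mathcal{B}^*\cap{\rm Sp}(\mathcal{N}_{\mathbb{S}_{+}^n}(X^*))=\{0\}$ (cf.\ Remark \ref{remark3-Ncond}(b)); together with $K^*\subset\mathcal{B}\mathbb{X}$ and the injectivity of $\mathcal{B}$ on $\mathcal{B}^{-1}(K^*)$ this supplies all the assumptions needed by Theorems \ref{Ncond1-ptilt} and \ref{Ncond2-ptilt}. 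For the sufficiency parts, the rank-minimality of $S^*$ is the hypothesis of Theorem \ref{Scond1-ptilt}, and $\mathcal{M}(x^*,-d)=\{S^*\}$ is precisely the restriction $\mathcal{M}(x^*,v^*)=(\mathcal{A}^*)^{-1}(v^*-\nabla g(x^*)S^*)\times\{S^*\}$ required by Theorem \ref{Scond2-ptilt}.

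The last step is to specialize the conclusions, using ${\rm Ker}\,\nabla^2\varphi(x^*)={\rm Ker}\,\mathcal{A}=\mathbb{X}$ and $[\![v^*]\!]^{\perp}=[\![d]\!]^{\perp}$. For (i), the sufficient condition ${\rm Ker}\,\nabla^2\varphi(x^*)\cap{\rm Ker}\,\mathcal{A}\cap[\![v^*]\!]^{\perp}\cap\Upsilon=\{0\}$ of Theorem \ref{Scond1-ptilt} collapses to $[\![d]\!]^{\perp}\cap\Upsilon=\{0\}$, which equals $[\![d]\!]^{\perp}\cap\Upsilon^*=\{0\}$ by Corollary \ref{Scorollary1-ptilt}; and Theorem \ref{Ncond1-ptilt} shows that tilt stability of $x^*$ forces ${\rm Ker}\,\nabla^2\varphi(x^*)\cap{\rm Ker}\,\mathcal{A}\cap\widetilde{\Upsilon}=\{0\}$, i.e.\ $\widetilde{\Upsilon}=\{0\}$, which via $\widetilde{\Upsilon}=\widetilde{\Upsilon}^*$ (established in Theorem \ref{Ncond1-ptilt}) yields $\widetilde{\Upsilon}^*=\{0\}$ and in particular $[\![d]\!]^{\perp}\cap\widetilde{\Upsilon}^*=\{0\}$. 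Part (ii) has the same structure: the condition ${\rm Ker}\,\nabla^2\varphi(x^*)\cap{\rm Ker}\,\mathcal{A}\cap\widehat{\Upsilon}=\{0\}$ reduces to $\widehat{\Upsilon}=\{0\}$, equivalently $\widehat{\Upsilon}^*=\{0\}$ by Corollary \ref{Scorollary2-ptilt}, so Theorem \ref{Scond2-ptilt} gives the sufficiency and Theorem \ref{Ncond2-ptilt} the necessity. I expect the only delicate points to be the two identifications just used --- that $\mathcal{G}$ coincides with $\mathcal{H}$ because the equality constraint is vacuous, and that condition (A) of Theorem \ref{Ncond1-ptilt} matches the stated hypothesis on ${\rm Ker}\,\mathcal{B}^*$ --- rather than any genuinely new analytic argument; the remaining work is the routine collapse of the intersection conditions under ${\rm Ker}\,\nabla^2\varphi(x^*)={\rm Ker}\,\mathcal{A}=\mathbb{X}$.
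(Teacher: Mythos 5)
Your proposal is correct and follows exactly the route the paper intends: Proposition \ref{pSDP-prop2} is stated there without a separate proof, as a direct specialization of Theorems \ref{Scond1-ptilt}, \ref{Scond2-ptilt}, \ref{Ncond1-ptilt} and \ref{Ncond2-ptilt} (with Corollaries \ref{Scorollary1-ptilt}--\ref{Scorollary2-ptilt} and Remark \ref{remark3-Ncond}(b)) to the case $\mathcal{A}=0$, $b=0$, $\varphi=\langle d,\cdot\rangle$, $g=\mathcal{B}\cdot-B$. Your verification of the hypotheses (identification of $\mathcal{G}$ with $\mathcal{H}$ when the equality constraint is absent, condition (A) collapsing to ${\rm Ker}\,\mathcal{B}^*\cap{\rm Sp}(\mathcal{N}_{\mathbb{S}_{+}^n}(X^*))=\{0\}$, and the reduction of the intersection conditions under ${\rm Ker}\,\nabla^2\varphi(x^*)={\rm Ker}\,\mathcal{A}=\mathbb{X}$) is exactly what the paper's citation of Sections \ref{sec3.2}--\ref{sec3.3} tacitly requires.
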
   
 \begin{remark}
  Under the uniqueness of the set $\mathcal{M}(x^*,-d)$, the condition $\widehat{\Upsilon}^{*}=\{0\}$ is proved in \cite[Lemma 14]{ChanSun08} to be equivalent to the SSOSC of the linear SDP \eqref{pSDP2}, which by \cite[ Proposition 15]{ChanSun08} is also equivalent to the constraint nondegeneracy at $-S^*$ to the dual of \eqref{pSDP2}, i.e., $\mathcal{B}^*{\rm lin}(\mathcal{T}_{\mathbb{S}_{+}^n}(-S^*))=\mathbb{X}$. Note that the condition ${\rm Ker}\,\mathcal{B}^*\cap{\rm Sp}(\mathcal{N}_{\mathbb{S}_{+}^n}(X^*))=\{0\}$ is exactly the constraint nondegeneracy at $x^*$ to \eqref{pSDP2}. Thus, Proposition \ref{pSDP-prop2} (ii) shows that the tilt stability of \eqref{pSDP2} is much weaker than the SSOSC of the linear SDP \eqref{pSDP2} as well as the constraint nondegeneracy at $-S^*$ to its dual problem. 
 \end{remark} 
 \section{Conclusion}\label{sec4.0}

 For the nonlinear SDP problem \eqref{pNSDP} with a convex feasible set, we  derived point-based sufficient characterizations in Theorems \ref{Scond1-ptilt} and \ref{Scond2-ptilt} for its tilt-stable local optimal solutions by using the multiplier of the minimum rank and imposing a suitable restriction on the multiplier set, respectively. Then, for the linear PSD constraint set, we  established a point-based necessary characterization in Theorem \ref{Ncond1-ptilt} with a certain gap from the converse of Theorem \ref{Scond1-ptilt}, and a point-based necessary characterization in Theorem \ref{Ncond2-ptilt} without gap from the converse of Theorem \ref{Scond2-ptilt}. These results are applied to the standard linear primal and dual SDPs to provide the specific characterizations for their tilt-stable solutions, and their relations with the SSOSC and the dual constraint nondegeneray were also clarified. To the best of knowledge, this is the first work to establish point-based sufficient and/or necessary characterizations for true non-polyhedral conic programs without constraint nondegeneracy condition even without requiring the uniqueness of multipliers.

\end{document}